\begin{document}

\title[Randomized Misfit Approach for Inverse Problems]{A
  Data-Scalable Randomized
  Misfit Approach for Solving Large-Scale PDE-Constrained Inverse Problems}


\author{E B Le$^1$, A Myers$^1$, T Bui-Thanh$^{1,2}$ and Q P Nguyen$^3$}
\address{$^1$ Institute for Computational Engineering
  and Sciences, The University of Texas at Austin, Austin, TX, USA} 
\address{$^2$ Department of Aerospace Engineering and Engineering
  Mechanics, The University of Texas at Austin, Austin, TX, USA}
\address{$^3$ Department of Petroleum and Geosystems Engineering, The University of Texas at Austin, Austin, TX, USA}  

\ead{ \{ellenle,aaron,tanbui\}@ices.utexas.edu and quoc\_p\_nguyen@mail.utexas.edu}

\renewcommand{\thefootnote}{\arabic{footnote}}

\def\etal{{\it et al.~}}
\newcommand{\A}{\mb{A}}
\newcommand{\alglab}[1]{\label{alg:#1}}
\newcommand{\algref}[1]{\ref{alg:#1}}
\newcommand{\att}{\left(t\right)}
\newcommand{\average}[1] {\ensuremath{\LRc{\!\{#1\}\!}}}
\newcommand{\B}{B}
\newcommand{\barr} {\begin{array}}
\newcommand{\Bb}{\mb{\B}}
\newcommand{\bb}{\mb{b}}
\newcommand{\bce}{\begin{center}}
\newcommand{\bde}{\begin{description}}
\newcommand{\bdm} {\begin{displaymath}}
\newcommand{\bea} {\begin{eqnarray}}
\newcommand{\bean} {\begin{eqnarray*}}
\newcommand{\ben} {\begin{enumerate}}
\newcommand{\beq} {\begin{equation}}
\newcommand{\bigO}{\mathcal{O}}
\newcommand{\bit}{\begin{itemize}}
\newcommand{\Cb}{\mb{C}} 
\newcommand{\C}{\mb{C}}
\newcommand{\cb}{\mb{c}}
\newcommand{\comp}[1]{\begin{pmatrix}{ #1 }\end{pmatrix}}
\newcommand{\corolab}[1]{\label{coro:#1}}
\newcommand{\cororef}[1]{\ref{coro:#1}}
\newcommand{\cp}{\ensuremath{{c_p}}}
\newcommand{\cs}{\ensuremath{{c_s}}}
\newcommand{\Cscrb}{\bs{\mscr{C}}}
\newcommand{\Cten}{\ensuremath{\tenfour{C}}}
\newcommand{\Curl} {\ensuremath{\nabla\times}}
\newcommand{\cvec}[1] {\ensuremath{\boldsymbol{{#1}}}}
\newcommand{\D}{\mb{D}}
\newcommand{\db}{\mb{d}}
\newcommand{\dbh}{\widehat{\db}}
\newcommand{\dd}[2] {\ensuremath{\frac{\partial {#1}}{\partial {#2}}}}
\newcommand{\DD}[2] {\ensuremath{\frac{d {#1}}{d {#2}}}}
\newcommand{\De} {\ensuremath{{\mathsf{D}^e}}}
\newcommand{\defilab}[1]{\label{defi:#1}}
\newcommand{\defiref}[1]{\ref{defi:#1}}
\newcommand{\Dehat} {\ensuremath{{\hat{\mathsf{D}}^e}}}
\newcommand{\Dei}[1]{\int_{\De} #1 \, d\vec{x}}
\newcommand{\DeI}[1]{\int_{D} #1 \, d\vec{x}}
\newcommand{\Deid}[1]{\int_{\De,N_e} #1 \, d\vec{x}}
\newcommand{\DeidN}[1]{\int_{\De,N} #1 \, d\vec{x}}
\newcommand{\DeiM}[1]{\int_{\Dhat} #1 \, d\vec{r}}
\newcommand{\DeiMd}[1]{\int_{\Dhat,N_e} #1 \, d\vec{r}}
\newcommand{\DeiMdN}[1]{\int_{\Dhat,N} #1 \, d\vec{r}}
\newcommand{\DeiMdNC}[2]{\int_{\Dhat,N_{#2}} #1 \, d\vec{r}}
\newcommand{\DeMd}[1]{\int_{\De,N_e} #1 \, d\vec{x}}
\newcommand{\DeMdN}[1]{\int_{D,N} #1 \, d\vec{x}}
\newcommand{\Dep} {\ensuremath{{\mathsf{D}^{e'}}}}
\newcommand{\Dhat} {\ensuremath{\hat{\mathsf{D}}}}
\newcommand{\diff}[1] {\ensuremath{\LRs{#1}}}
\newcommand{\Div} {\ensuremath{\nabla\cdot}}
\newcommand{\Divv}[1] {\ensuremath{\nabla_{#1}\cdot}}
\newcommand{\earr} {\end{array}}
\newcommand{\eb}{\mb{\e}}
\newcommand{\ece}{\end{center}}
\newcommand{\ede}{\end{description}}
\newcommand{\edm} {\end{displaymath}}
\newcommand{\edoc} {

\begin{abstract}
A randomized misfit approach is presented for the efficient
solution of large-scale PDE-constrained inverse problems with
high-dimensional data. The purpose of
this paper is to offer a theory-based framework for
random projections in this inverse problem setting. The stochastic approximation to
the misfit is analyzed using random projection theory. By expanding
beyond mean estimator convergence, a practical characterization of
randomized misfit convergence can be
achieved. The theoretical results developed hold with any valid random
projection in the literature. The class of feasible distributions is broad yet simple to characterize compared to previous stochastic misfit methods. This class includes very sparse random
projections which provide additional computational benefit. A different proof for a variant of the
Johnson-Lindenstrauss lemma is also provided. This leads to a different intuition
for the $\mc{O}(\varepsilon^{-2})$ factor in bounds for
Johnson-Lindenstrauss results. The main contribution of this paper is a theoretical result
showing the method guarantees a valid solution for small reduced
misfit dimensions. The interplay between Johnson-Lindenstrauss theory and Morozov's
discrepancy principle is shown to be essential to the
result. The computational cost savings for large-scale
PDE-constrained problems with high-dimensional data is discussed.
Numerical verification of the developed theory is presented for
model problems of estimating a distributed parameter in an elliptic partial
differential equation.  Results with different random projections are presented to demonstrate the viability and accuracy of the proposed approach.
\end{abstract}

\noindent{\it Keywords \/} Random projections, stochastic programming, large-scale inverse
problems, model reduction, large deviation theory,
Johnson-Lindenstrauss lemma, big data.

\ams{
35Q62,  
62F15,  
35R30,  
35Q93,  
65C60  
}


\maketitle

\section{Introduction}
\seclab{intro}
An emerging grand challenge in computational science and engineering
is the solution of large-scale statistical inverse problems
governed by PDEs that involve large amounts of observational
data. These are difficult problems that
can be found in diverse areas of science, engineering, and
medicine, ranging from inference of the basal friction field in
continental ice-sheet modeling to estimation of contaminant
plume concentration in groundwater models. They are often characterized
by infinite-dimensional parameter fields (for example, a
spatially-distributed quantity) which results in a
parameter dimension in the thousands or millions when discretized. 
We seek the Bayesian solution, as it offers a range of estimates that are consistent with data while
accounting for uncertainty in the data, model, and prior
knowledge. Unfortunately, this amounts to exploring a posterior
probability, a task that is notoriously intractable for the
problems of interest. 

The dominant cost in this setting is measured in number of PDE
solves. Each PDE solve takes minutes or hours even on modern
supercomputers (e.g. \cite{KomatitschTsuboiJiEtAl03,Bui-ThanhBursteddeGhattasEtAl12,Martin15}). State-of-the-art methods require repeated evaluations
of an objective functions and its derivative information, resulting in
a total cost of hundreds, thousands, or millions of PDE solves for
many realistic problems. Reducing the number of PDE solves
is paramount. Relative to the cost of a model run, linear algebra is considered negligible.

In this paper, we present a randomized misfit approach to directly
address the computational burden induced by high-dimensional
data. Note that the idea of randomizing a misfit function is not new. Randomized approximations of
misfit functions can be found in methods for seismic inversion
\cite{AravkinFriedlanderHerrmannEtAl12,MoghaddamHerrmannothers10,LeeuwenAravkinHerrmann11}, in
stochastic optimization algorithms such as stochastic gradient descent
(see e.g. \cite{SchraudolphGraepel03, FinkelKleemanManning08}), and in
the sample average approach (SAA) \cite{NemirovskiJuditskyLanEtAl09,ShapiroDentchevaRuszczynski09,KleywegtShapiroHomem-de-Mello02}. 

What is novel here is the particular randomized
misfit framework and the resulting analysis. It is clear that the
randomized objective function converges; it is less obvious that the
minimizer converges. A connection with random
projection theory is the key to understanding why the method
results in an acceptable solution for a surprisingly small randomized
misfit dimension, not just in the limit. This analysis potentially could be applied
to existing methods that use randomized objective functions. 




Roughly speaking, random projections are
``quasi-orthogonal'' transformations from high-dimensional spaces to
much lower-dimensional spaces that, with high probability,
preserve geometric properties such as Euclidean norms, distances, and
angles. They are particularly revered for possessing such properties {\em independent of the
  original data dimension}. The geometric invariance properties are a
consequence of the concentration of measure phenomenon in
high dimensions. One can check that two high-dimensional random normal vectors on the
unit sphere are nearly orthonormal, and that this phenomenon becomes
more pronounced as the dimension grows larger. We show that for a broad class of distributions, the probability that a sample average
falls within a specified ball around its mean grows exponentially high
with the sample size. This is the power of many independent random
projections working together. Random projections provide probabilistic
accuracy bounds that are parameterized by the degree of approximation or the dimension of
the reduced space. That is, given a tolerance of approximation, one
can find the reduced space dimension that will preserve
Euclidean norm and vice versa. To assist in the practical use and verification of the method, in our numerical
examples we use random projections that are easy to
implement. 

An active area of research is developing optimization methods for when the data set does not
even fit in memory. The data needs to be subsampled prior to input. We
stress here that this is not the main target of the randomized
misfit approach. In the approach here, the data vector is not subsampled, but rather the
misfit between the model and the data is linearly transformed to a
smaller dimension where its geometric properties are preserved. This
is equivalent to summing random linear combinations of the misfit
components. We
are not cleaning the data, fusing data points, or choosing a random subset of data to
represent the full data set. We use the entire data set. The
motivation is that the dominant cost in our problem setting is the
number of PDE solves. The misfit vector dimension, as we will show, is
a hard upper bound on a factor of the
dominant cost. Thus if we can transform the misfit to a smaller
dimension, we can reduce the dominant cost of solving the inverse problem and
guarantee the accuracy of the solution. Computational cost is
discussed in detail in Section \secref{costanalysis}.

The presentation is here is purposefully general and does not assume
any particular underlying
structure of the observational data, aside from its relationship to
parameter space via the parameter-to-observable map and the noise
model. Again there is a large body of
work in data sampling, compression and/or fusion that exploits known
underlying structure of the observational data set, typically for
specific inverse problems. These methods are not incompatible with the approach we
outline. They could potentially be combined with the method here to provide
maximum computational savings.

\subsection{Current state-of-the-art and our contributions}
\seclab{existingwork}

To keep the discussion succinct and relevant to the problem of
interest, where the dominant cost is PDE solves, the review is
limited to existing work in randomized methods for PDE-constrained
inverse problems. An active area of research is in applying random projections to linear
regression problems. The dominant cost in these problems is generally
measured in linear algebraic operations.

Since \cite{HalkoMartinssonTropp11}, many randomized methods to reduce the computational
complexity of large-scale PDE-constrained inverse problems have
focused on use of the randomized SVD algorithm of
\cite{MartinssonRokhlinTygert11a}. This algorithm has been used to generate truncated SVD approximations of the
parameter-to-observable operator \cite{AlexanderianPetraStadlerEtAl16,IsaacPetraStadlerEtAl15,XiangZou15, XiangZou13,ChaillatBiros12}, the
regularization operator \cite{LeeKitanidis14, KitanidisLee14}, or the
prior-preconditioned Hessian of the objective function
\cite{Bui-ThanhGhattasMartinEtAl13, SaibabaKitanidis15,
  AlexanderianPetraStadlerEtAl14, Bui-ThanhGirolami14,
  Bui-ThanhBursteddeGhattasEtAl12}. The algorithm uses a random projection matrix to produce a
low-rank operator. To our knowledge, only Gaussian distributions are used. The randomized operator is subsequently
factored to generate an approximate SVD
decomposition for the original operator $\mc{A}$. Theoretical results in
\cite{MartinssonRokhlinTygert11a} guarantee the spectral norm accuracy of this
approximation is of order $\sigma_{k+1}(\mc{A})$ with a very high
user-defined probability. Here $k$ is equal to the reduced dimension $n$ plus a small number of 
oversampling vectors. Subsequently, results known about
the accuracy of a deterministic inverse solution (e.g., Proposition 1 in
\cite{SaibabaLeeKitanidis15}, Theorem 1 in \cite{XiangZou13})  to a
problem approximated with a randomized method are derived using this
bound from \cite{MartinssonRokhlinTygert11a}. The bounds
assume knowledge of  $\sigma_{k+1}(\mc{A})$.

{\em
Random source encoding} or {\em
simultaneous (random) source} methods have been shown to be effective for
parameter estimation in PDE-constrained inverse problems with multiple
right-hand sides (sources) and corresponding data sets
\cite{Roosta-Khorasani15,Roosta-KhorasaniVanAscher14,Roosta-KhorasaniAscher15,Roosta-KhorasaniDoelAscher14,Roosta-KhorasaniSzekelyAscher15,RouthLeeNeelamaniEtAl14,KrebsAndersonNeelamaniEtAl12,NeelamaniKrohnKrebsEtAl10, KrebsAndersonHinkleyEtAl09,LeeuwenAravkinHerrmann11,HaberChungHerrmann12}. This
problem framework characterizes many inverse problems, including electromagnetic
imaging (e.g. \cite{HaberAscherOldenburg04,
  OldenburgHaberShekhtman12}), seismic waveform inversion
(e.g. \cite{Pratt99, VirieuxOperto09, KrebsAndersonHinkleyEtAl09,HerrmannErlanggaLin09}), the DC
resistivity problem
(e.g. \cite{HaberHeldmannAscher07,HaberHeldmannAscher07}), and
electromagnetic impedance tomography
(e.g. \cite{DuraiswamiSarkarChahine98} or Sec. 6.3 in \cite{KaipioSomersalo05}). Simultaneous
source methods take random linear combinations of $s$ sources to
produce $\tilde{s}$ randomly combined sources, where $\tilde{s} \ll s$.
 The result is a randomized misfit function that requires just $\tilde{s}$ PDE solves to evaluate instead of $s$
PDE solves. 
The work in \cite{YoungRidzal12}
 shows that source encoding in its stochastic
reformulation (and as a stochastic trace estimator method \cite{Hutchinson90}) is equivalent
to an application of the random projection defined in \cite{Achlioptas03}. 
Simultaneous source methods point out that numerical solutions are
surprisingly better
than the theory predicts with a small number of sources $\tilde{s}$
(e.g. $\tilde{s} \sim\mathcal{O}(1)$)
\cite{AravkinFriedlanderHerrmannEtAl12,HaberChungHerrmann12,YoungRidzal12,
  Roosta-Khorasani15,Roosta-KhorasaniVanAscher14,Roosta-KhorasaniAscher15,Roosta-KhorasaniDoelAscher14,Roosta-KhorasaniSzekelyAscher15,KrebsAndersonHinkleyEtAl09}.

This paper extends the above work in several directions. The approach
outlined here allows for a stochastic reformulation of all
PDE-constrained inverse problems recast in a constrained least-squares
formulation, not just multi-source problems. Our analysis of
computational efficiency is necessarily different and depends on how
the large data dimension affects the optimization. The
cost savings for this method is subtler than the cost savings in simultaneous source
methods. 

The main contribution of this paper is the first theoretical result to
guarantee that the deterministic solution obtained with the randomized cost is valid
for a fixed small reduced dimension. This provides an explanation
for the surprising quality of solutions when using a
randomized misfit function with a small reduced dimension. The
efficacy of the randomized misfit approach is a result of the interplay between
Morozov's discrepancy principle and random projection theory. As will be shown, the data error and ill-posedness inherent in
inverse problems is what allows random projections to be successful. 

The key is the use of large deviation theory to arrive at a practical
characterization of objective function convergence. Large deviation
techniques form the basis of random projection theory and other
effective randomized dimension reduction algorithms (e.g. randomized SVD). This approach
exploits the concentration of measure phenomenon in high dimensions. We do
not depend on the slow convergence of estimators to their exact
mean. Instead it is shown that for a certain class of distributions, the
tail probability of a sample average of misfit estimators decays
exponentially, with a rate parameterized by the sample size. This
statement is turned into a probabilistic bound on the randomized cost for a fixed sample size. 

The class with sufficient large deviation decay rate
turns out to be {\em subgaussian} random variables. This class
contains many of the
distributions used in the simultaneous source literature. Another
novel aspect of the approach here is that it permits the use of any random projection in the
literature. Many new random projections have appeared since the
seminal work in \cite{Achlioptas03}. Potentially useful
random projections are those drawn from very sparse distributions,
which we test in our examples. 
 
Additionally, from the stochastic formulation of the misfit, a different version of the Johnson-Lindenstrauss
embedding theorem \cite{Dirksen15,Matousek08,IndykNaor07} is
shown. This leads to an insight into why the reduced misfit dimension $n$ is
$\mc{O}(\varepsilon^{-2})$, where $\varepsilon$ is the relative error
of the randomized cost function.

The structure of the paper is as follows. Section \secref{analysis}
presents the theoretical analysis for the randomized misfit approach
by deriving the large deviation bounds on the objective function error for a broad class
of distributions. The reduced misfit dimension is shown to be
independent of the original data dimension. This derivation leads to a
different proof of a variant of the celebrated Johnson-Lindenstrauss embedding
theorem.  Using Morozov's discrepancy principle, Theorem
\theoref{morthm} shows that the effective reduced misfit dimension is
also bounded below by the noise in the problem. Therefore, the RMA
solution is a guaranteed solution for the original problem with a high
user-defined probability. The reduced computational cost in problems
with high-dimensional data is assessed in Section \secref{costanalysis}. Section \secref{numresults} summarizes numerical
experiments on a model inverse heat conduction problem in one-, two-,
and three spatial-dimensions. We compare the RMA solution obtained
with different distributions to the solution of the
full problem. We also provide numerical support for Theorem \theoref{morthm}.

\section{The randomized misfit approach for inverse problems}

We assume an additive noise-corrupted pointwise
observational model
\begin{equation}
\eqnlab{pointwiseObs}
  \d_j = \w\LRp{\x_j;\u} + \eta_j, \quad j = 1,\ldots,N,
\end{equation} 
where the objective is to reconstruct the distributed parameter $\u$
given $N$ data points $\d_j$, with $N$ large.  For a given $\u$, a set
of states $\w\LRp{\x_j;\u}$ is obtained by evaluating an
expensive-to-solve forward model governed by PDEs, and then applying
a linear observation operator to match the data locations. The location of
an observational data point in an open and bounded spatial domain
$\Omega$ is denoted by $\x_j$, and $\eta_j$ is assumed to be Gaussian random
noise with mean 0 and variance $\sigma^2$.

Concatenating the observations, we rewrite \eqnref{pointwiseObs} as
\begin{equation}
  \eqnlab{observation}
  \db = \F\LRp{\u} + \etab, 
\end{equation}
where $\F\LRp{\u} := \LRs{\w\LRp{\mb{x}_1;\u},
  \ldots,\w\LRp{\mb{x}_N;\ub}}^\top$ is the parameter-to-observable
map. Although
the forward problem is usually well-posed, the inverse
problem is ill-posed. An intuitive reason is that discrete
observations can only contain limited information about an
infinite-dimensional parameter. The more complete explanation is that the
parameter-to-observable map exhibits rapid spectral decay. This can be
numerically observed and is proven for many practical inverse problems
\cite{Bui-ThanhGhattas12a, Bui-ThanhGhattas12,
  Bui-ThanhGhattas13a}. By an application of Picard's Theorem we may
then show the inverse operator is unbounded and therefore the problem is ill-posed.   

A standard deterministic Tikhonov approach resolves the
ill-conditioning by adding a quadratic term to the cost function, so
that the problem may now be formulated as
\begin{equation} 
\min_\u\enspace\mathscr{J} \LRp{\u} := \frac{1}{2}\nor{\yobsh -
  \fmaph\LRp{\u}}^2+ \half\nor{R^\half \u}^2,
\end{equation}
where $\yobsh - \fmaph\LRp{\u}:=\frac{1}{\sigma} \LRs{
  \db-\fmap(\u) }$ is the {\em data misfit vector}, Euclidean norm in $\R^N$ is denoted by
$\nor{\cdot}$, and $\vectornorm{R^\half\cdot}$ is a norm weighted by
a regularization operator $R$. 
This point estimate does not account for the uncertainty in the
solution. Thus we recast the problem in the framework of Bayesian
inference, 
where we seek a statistical description of all possible
parameter fields that are consistent with the observations. The
Bayesian solution is a probability distribution that accounts for the uncertainties in the
observations, the forward model, and the prior knowledge.
It requires specification of a likelihood model, which
characterizes the probability that the parameter $\u$ could have
produced the observed data $\yobs$. It also requires a prior model, which is
problem-dependent and represents a subjective belief regarding the
distribution of $\u$. The prior model must ensure sufficient 
regularity of the parameter so that the problem is well-posed \cite{Stuart10,Bui-ThanhGhattasMartinEtAl13, PetraMartinStadlerEtAl14}. 

The additive-noise model \eqnref{observation} is used to
construct the likelihood pdf which is expressed as
\begin{equation}
  \like \propto \exp\LRp{\frac{1}{2}\nor{\yobsh -
  \fmaph\LRp{\u}}^2}.
\end{equation}

For concreteness of presentation, we postulate that the
prior is a Gaussian random field with mean $\u_0$  and a covariance
operator $\mc{C}$.  
We must stress here that the choice of a meaningful prior in the
infinite-dimensional setting is an active area of research
\cite{Stuart10, DashtiHarrisStuart12,LassasSaksmanSiltanen09,Bui-ThanhGhattas15}. The Gaussian prior used
here is chosen only to ensure well-posedness and be
computationally amenable to general large-scale problems. We
choose $\mc{C}=\mc{A}^{-2}$, where $\mc{A}$ is a
Laplacian-like operator with its domain of definition specified by an
elliptic PDE, appropriately-chosen boundary conditions, and parameters
than can encode spatial correlation and anisotropy information (for
specific implementation details see \cite{Bui-ThanhGhattasMartinEtAl13, AlexanderianPetraStadlerEtAl16}). This choice avoids
constructing and inverting a dense covariance matrix and exploits existing fast
solvers for elliptic operators. It additionally provides a connection
to the Mat\'{e}rn covariance functions used frequently in
geostatistics \cite{LindgrenRueLindstroem11, SaibabaKitanidis15,Kitanidis12} and therefore has a
scientific justification. Note that directly specifying a
covariance function and then factorizing the covariance matrix is
common and reasonable for small- to medium-scale statistical inverse
problems, but is intractable for large-scale problems \cite{Martin15}.

We then discretize the prior,
the forward equation, and the parameter $\u$ (yielding a finite-dimensional
vector $\ub \in \R^m$)  through the finite element method (see \cite{Bui-ThanhGhattasMartinEtAl13,
  PetraMartinStadlerEtAl14} for a comprehensive treatment) so that the
finite-dimensional posterior probability of $\ub$ is given by Bayes formula as
\begin{equation}
  \eqnlab{post}
  \post \propto \exp\LRp{\frac{1}{2}\nor{\yobsh - \fmaph\LRp{\ub}}^2 + \half \nor{\ub-\ub_0}^2_{\mc{C}}}
\end{equation}
where $\nor{\cdot}_{\mc{C}} := \nor{\mc{C}^{-\half}\cdot}_\Ltwoc$ denotes the weighted $\Ltwo$ norm induced by the $\Ltwo$ inner product $\LRa{\cdot,\cdot}_\Ltwo$.
The maximum a posteriori (MAP) point of \eqnref{post} is defined as
\begin{equation}
  \eqnlab{MAP}
 \umap := \arg\min_\ub \J\LRp{\ub, \db} =\half \nor{\yobsh -
  \fmaph\LRp{\ub}}^2 + \half \nor{\ub-\ub_0}^2_{\mc{C}}.
\end{equation}

Note that the last term in \eqnref{MAP} may be viewed as a Tikhonov
regularization term, and subsequently the MAP point may be considered
as a solution to the deterministic inverse problem with a
regularization ``inspired'' by the prior. Understanding the MAP point
in a Bayesian framework allows one to account for the subjectivity of
choosing a prior. Ultimately, the goal is to find the Bayesian solution which offers a
statistical description of {\em all solutions} consistent with the
data. For this paper, we restrict ourselves to MAP computation, a necessary starting
point, in order to focus on
methodology development in addressing the challenge of big data, i.e.,
large $N$. Scalability and efficiency of the method in the Bayesian setting is the focus of ongoing work.

The main idea of the randomized misfit approach is the following.
Let $ \rb \in R^N$ be a random vector with mean zero and identity
covariance, i.e. $\Ex_{\rb} \LRs{\rb\rb^\top} = \I$ (equivalently, let $\rb$ be
the vector of $N$ i.i.d. random variables $\zeta$ with mean zero and variance 1).

Then the misfit term of \eqnref{MAP} can be rewritten as:
\begin{equation}
\nor{\misfitonly}^2=
  \LRp{ \misfitonly }^\top 
  \Expect_{\rb}\LRs{\rb\rb^\top} \LRp{\misfitonly} =   \Expect_{\rb}\LRs{\rb^\top\LRp{\misfitonly}}^2,
\end{equation}
which allows us to write the objective functional in \eqnref{MAP} as
\begin{equation}
  \J \LRp{\ub}  = \halft\Expect_{\rb}\LRs{\rb^\top\LRp{\misfitonly}}^2 + \halft
  \nor{\ub - {\ub_0}}^2_{\Cb}. 
\end{equation}

We then approximate the expectation
$\Expect_{\rb}\LRs{\cdot}$ using a Monte Carlo approximation (also
known as the Sample Average Approximation (SAA)
\cite{NemirovskiJuditskyLanEtAl09,ShapiroDentchevaRuszczynski09}) with
$n$ i.i.d. draws $\LRc{\rb_j}_{j = 1}^n$. This leads to the randomized inverse problem
\begin{align}
  \eqnlab{randprob} 
  \min_\ub\enspace {\J}_n\LRp{\ub;\rb} &= \frac{1}{2n}\sum_{j=1}^n\LRs{\rb_j^\top\LRp{{\yobsh} - \fmaph\LRp{\ub}}}^2 + \halft \nor{\ub - {\ub_0}}^2_{\Cb}. \nonumber \\
         &= \frac{1}{2}\nor{\yobsbar - \fmapbar\LRp{\ub}}^2 + \halft \nor{\ub - {\ub_0}}^2_{\Cb},
\end{align} 
where $\yobsbar :=
\frac{1}{\sqrt{n}}\LRs{\rb_1,\ldots,\rb_n}^\top\yobsh\textrm{, and }
 \fmapbar\LRp{\ub} := \frac{1}{\sqrt{n}}  \LRs{\rb_1 ,\ldots,\rb_n
 }^\top \fmaph\LRp{\ub} \in \R^n$. We call $\yobsbar -
 \fmapbar\LRp{\ub}$ the {\em reduced data misfit vector}.

For a {\em reduced misfit vector dimension} $n \ll N$, 
we call this randomization the
randomized misfit approach (RMA).  The new problem
\eqnref{randprob} with fixed i.i.d. realizations $\LRc{\rb_j}_{j = 1}^n$ may
be solved using any scalable robust optimization algorithm. For the numerical experiments in
Section \secref{numresults}, a globalized inexact Newton-CG implementation
\cite{BranchColemanLi99} is used. The use of a similar mesh-independent Newton-type method is
assumed for the discussion of computational complexity in Section
\secref{costanalysis}.

We define the MAP point of
\eqnref{randprob} as
\begin{equation}
  \eqnlab{utmap}
  \umap_n := \arg\min_\ub \Jn\LRp{\ub},
\end{equation}
the optimal RMA cost as $\Jn^\star := \Jn\LRp{\umap_n}$, and
the optimal true cost as $\J^\star := \J\LRp{\umap}$. We wish to
characterize the errors $\snor{\Jn^\star -
  \J^\star}$ and $\nor{\umap_n - \umap}$ for a given reduced misfit dimension $n$.
This is the subject of section \secref{analysis}.

\section{An analysis of the randomized misfit approach (RMA)}
\seclab{analysis}

\subsection{Validity of the RMA solution}
\seclab{validity}

For a given $\ub$ in parameter space, it is clear that $\Jn
\LRp{\ub;\rb} $ in \eqnref{randprob} is an unbiased estimator of $\J
\LRp{\ub}$. It is also clear from the Law of Large Numbers that
$\Jn\LRp{\ub}$ converges almost surely to its mean
$\J\LRp{\ub}$. However, the efficacy of the randomized misfit approach
lies in exploiting the concentration of measure phenomenon of high
dimensions, and quantifying the convergence {\em close to the
  mean}. This requires characterizing the exponential decay of the objective
function error, which is parameterized by the reduced misfit dimension
$n$. 

We first show that errors larger than
$\delta / 2$, for a given $\delta>0$, decay with a rate at least as fast
as the tail of a centered Gaussian. That is, for some distribution in \eqnref{randprob} we have
\begin{equation}
  \eqnlab{LDbound}
  \Prob\LRs{\snor{\Jn\LRp{\ub;\rb} - \J\LRp{\ub}} > \frac{\delta}{2}} \le e^{-n I\LRp{\delta}},
\end{equation}
where
\begin{equation}
  I\LRp{\delta} \ge c\frac{\delta^2}{2\theta^2}.
\end{equation}
for some $c > 0$ and some $\theta$.

This rate is sufficient to guarantee the
solution attained from the 
the randomized misfit approach is a discrepancy principle-satisfying
solution for the original inverse problem as will be shown in Theorem
\theoref{morthm}. Inequality \eqnref{LDbound} is equivalent
to the statement that $\Prob\LRs{\snor{\Jn\LRp{\ub;\rb} - \J\LRp{\ub}} > \frac{\delta}{2}}$ satisfies
a {\em large deviation principle} with {\em large deviation rate function} $I
\LRp{\delta}$ \cite{Touchette09}.

The following proposition may be viewed as a special case of Cram\'{e}r's
Theorem, which states that a sample mean of i.i.d. 
random variables $X$ asymptotically obeys a large deviation
principle with rate $I\LRp{\delta}=\sup_k\LRc{k\delta - \ln
\Expect\LRs{e^{kX}}}$ \cite{Touchette09}. However we require the exact non-asymptotic
bounds as derived here to show convergence of the RMA for $n=\mc{O}(1).$
Recall that a real-valued random variable
$X$ is {\em $\theta$-subgaussian} if there exists some $\theta>0$ such that for all $t\in \mathbb{R}$,
$\Expect \LRs{e^{tX}} \leq e^{\theta^2t^2/2}$.

\begin{proposition}
  \propolab{JLsufficient} 
The RMA error $\snor{\Jn\LRp{\ub;\rb} -\J\LRp{\ub}}$ has a tail probability that decays exponentially in
$n$ with a nontrivial large deviation rate. Furthermore, if the
 RMA is constructed with $\rb$ such that $ 2{\snor{\Jn\LRp{\ub;\rb} -
    \J\LRp{\ub}}} $ is the sample mean
of i.i.d. $\theta$-subgaussian random variables, then its large deviation rate is
bounded below by $c\frac{\delta^2}{2\theta^2}$ for some $c > 0$.
\end{proposition}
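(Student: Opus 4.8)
The plan is to reduce the claimed tail bound to the classical Chernoff/Cram\'er estimate for a sample mean of centered i.i.d.\ variables, and then to read off the rate from the subgaussian moment generating function bound. First I would fix $\ub$ and abbreviate the (deterministic) data misfit vector by $\mb{m} := \yobsh - \fmaph\LRp{\ub}$. Since $\Expect_{\rb}\LRs{\rb_j\rb_j^\top} = \I$, the scalar $Y_j := \rb_j^\top\mb{m}$ satisfies $\Expect\LRs{Y_j^2} = \mb{m}^\top\Expect\LRs{\rb_j\rb_j^\top}\mb{m} = \nor{\mb{m}}^2$, so that $X_j := Y_j^2 - \nor{\mb{m}}^2$ are i.i.d.\ with mean zero. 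Comparing \eqnref{randprob} with \eqnref{MAP}, the regularization terms cancel and
\begin{equation}
  2\LRp{\Jn\LRp{\ub;\rb} - \J\LRp{\ub}} = \frac{1}{n}\sum_{j=1}^n X_j =: \bar{X}_n ,
\end{equation}
so the event $\snor{\Jn\LRp{\ub;\rb} - \J\LRp{\ub}} > \delta/2$ is exactly $\snor{\bar{X}_n} > \delta$. The problem is thereby reduced to a large deviation statement for $\bar{X}_n$.

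For the first assertion I would invoke Cram\'er's theorem directly. Provided the moment generating function $\Expect\LRs{e^{tX}}$ is finite in a neighbourhood of $t = 0$ (which holds for every projection of interest here, including the sub-exponential squared-Gaussian case), the Chernoff bound yields $\Prob\LRs{\snor{\bar{X}_n} > \delta} \le 2\exp\LRp{-n I\LRp{\delta}}$ with $I\LRp{\delta} = \sup_t\LRc{t\delta - \ln\Expect\LRs{e^{tX}}}$, and this rate is strictly positive for every $\delta > 0$ because $X$ is centered and nondegenerate. This gives exponential decay in $n$ with a nontrivial rate.

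For the refinement I would add the subgaussian hypothesis, namely that each $X_j$ is $\theta$-subgaussian with $\Expect\LRs{e^{tX}} \le e^{\theta^2 t^2/2}$ for all $t$, and bound the Legendre transform from below by replacing the true log-MGF with its subgaussian majorant:
\begin{equation}
  I\LRp{\delta} = \sup_t\LRc{t\delta - \ln\Expect\LRs{e^{tX}}} \ge \sup_t\LRc{t\delta - \frac{\theta^2 t^2}{2}} = \frac{\delta^2}{2\theta^2},
\end{equation}
the supremum being attained at $t^\star = \delta/\theta^2$. Since the majorant $e^{\theta^2 t^2/2}$ is even, $-X$ is $\theta$-subgaussian as well, so the lower tail obeys the identical bound and the two-sided estimate follows by a union bound. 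Combining with the reduction of the first paragraph gives $\Prob\LRs{\snor{\Jn\LRp{\ub;\rb} - \J\LRp{\ub}} > \delta/2} \le e^{-n I\LRp{\delta}}$ with $I\LRp{\delta} \ge c\,\delta^2/(2\theta^2)$, which is exactly \eqnref{LDbound}.

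The main obstacle is not the Chernoff calculation, which is routine, but the bookkeeping around the subgaussian hypothesis and the two-sided union bound. The factor of $2$ from summing the upper and lower tails must be absorbed into the rate; this is the role of the free constant $c < 1$, and it is harmless because $\tfrac{\ln 2}{n}$ is dominated by $\tfrac{\delta^2}{2\theta^2}$ unless both $n$ and $\delta$ are very small. More conceptually, the genuine content is recognizing that $X_j = \LRp{\rb_j^\top\mb{m}}^2 - \nor{\mb{m}}^2$ is subgaussian precisely when the projection entries are bounded, so that $X_j$ is bounded and Hoeffding's lemma applies, whereas for Gaussian projections $X_j$ is only sub-exponential. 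This is exactly why the sharp quadratic rate $\delta^2/(2\theta^2)$, and hence the $\mc{O}(\varepsilon^{-2})$ scaling flagged in the introduction, attaches to the subgaussian class rather than to every admissible distribution.
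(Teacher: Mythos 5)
Your proof is correct and takes essentially the same route as the paper's: a Chernoff bound on the centered i.i.d.\ sum $T(\rb_j;\ub) = (\rb_j^\top(\yobsh - \fmaph(\ub)))^2 - \nor{\yobsh - \fmaph(\ub)}^2$, followed by lower-bounding the Legendre transform $I(\delta) = \sup_t\{t\delta - \ln\Expect[e^{tT}]\}$ via the subgaussian majorant $e^{\theta^2 t^2/2}$, with the maximum of $t\delta - \theta^2 t^2/2$ at $t^\star = \delta/\theta^2$ giving $\delta^2/(2\theta^2)$. If anything, your bookkeeping is more careful than the paper's terse proof, which treats only the upper tail and asserts $c=1$, whereas your two-sided union bound correctly explains the role of the free constant $c<1$ in absorbing the factor of $2$.
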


\begin{proof}
Given $\rb$, define the random variable 
\begin{equation}
\eqnlab{misfitRV}
T\LRp{\rb ; \ub}:= \LRs{\rb^\top\LRp{{\yobsh}-\fmaph\LRp{\ub}}}^2
- \nor{\yobsh - \fmaph\LRp{\ub}}^2.
\end{equation}
By a standard Chernoff bound (see, e.g.\cite{Kelly91}), we have that the RMA tail error decays exponentially as 
\begin{equation}
\eqnlab{chernoff}
\Prob \LRs{ \frac{1}{n}\sum_{j=1}^n
T\LRp{\rb_j;\ub} >\delta} \leq e^{-n I(\delta)}, 
\end{equation}
where $I(\delta) = \max_t\LRc{t\delta - \ln \Expect
   \LRs{e^{tT(\rb ; \ub)}}}$ is the large deviation rate.

The second part of the proposition follows with 
$c=1$ by bounding $ \Expect \LRs{e^{tT\LRp{\rb ; \ub}}} $ 
in \eqnref{chernoff} and computing the maximum of $t\delta - \theta^2t^2/2$.
\end{proof}

A large number of distributions are subgaussian, notably
the Gaussian and Rademacher (also referred to as Bernoulli)
distributions, and in fact any bounded
random variable is subgaussian. One class of subgaussian
distributions that provides additional computational efficiency is the following.

\begin{definition}[$\ell$-percent sparse random variables \cite{Matousek08, LiHastieChurch06}]

 Let $s=\frac{1}{1-\ell}$ where $\ell \in [0,1)$ is the level of
 sparsity desired. Then 
  \begin{eqnarray}
    \eqnlab{Li}
    \zeta & = & \sqrt{s} \begin{cases}
      +1 & \text{with probability }\frac{1}{2s},\\
      0  & \text{with probability } \ell=1-\frac{1}{s},\\
      -1 & \text{with probability }\frac{1}{2s}
    \end{cases}
  \end{eqnarray}
is a {\em $\ell$-percent sparse distribution.}
\end{definition}

Note that for $\ell
 = 0$, $\zeta$ corresponds to a Rademacher distribution, and that
 $\ell = 2/3$ corresponds to the {\em Achlioptas distribution} \cite{Achlioptas03}.
By inspection we have that $\Expect\LRs{\zeta} = 0$ and
$\Expect\LRs{\zeta^2} = 1$, and thus draws from $\zeta$ can be used in the randomized
misfit approach.

Distribution \eqnref{Li} is well-suited for the randomized misfit approach: it
is easy to implement, and the computation of the randomized misfit vector amounts to
only summations and subtractions, adding a further speedup to the method. Increasing
from $s=1$ to $s>1$ results in a $s$-fold speedup as only $1/s$
of the data is included. Note the RMA cost can be seen as the sum of $n$
random combinations from the $N$-dimensional misfit vector.
Since each random combination has a different sparsity pattern, we
effectively do not exclude any data, yet each computation requires
only $1/s$ of the data. 

We note that for the distribution \eqnref{Li}, $1\le s<\infty$, the random variable $\zeta$ distributed by
\eqnref{Li}  has\footnote[1]{Using the inequality $\LRp{2k}! \geq
  2^kk!$ and the Taylor expansion
  around $0$, we have that for $t\in (0,1]$
\begin{equation}
\Expect\LRs{e^{t\zeta}}  = \frac{1}{s} \sum^\infty_{k=0}\frac{\LRp{st^2}^k}{\LRp{2k}!} \leq
   \frac{1}{s}\sum^{\infty}_{k=0} \frac{\LRp{st^2}}{2^k k!}
   =\frac{1}{s} e^{\frac{s}{2}t^2} = e ^ {- \ln{s} + \frac{s}{2} t^2} \leq e ^ { -t^2\ln{s} + \frac{s}{2}t^2 }. 
\end{equation}
} $\Expect\LRs{e^{t\zeta}}\leq e^\frac{b^2t^2}{2}$ with
$b=\sqrt{s-2\ln s},~ \forall t\in (0,1]$ . So, we may use it in the
following theorem. 

\begin{theorem}
Define $ \vb := {\dbh} -\Fh\LRp{\ub} \in \R^N $. If $\rb$ in  \eqnref{misfitRV} has components that are $b$-subgaussian for some $b\geq1/\sqrt{2}$, then the RMA error has a large deviation rate
  bounded below by $c\frac{\delta^2}{2\theta^2}$ for $\theta = \nor{\vb}^2 / \sqrt{2}$ and some $0<c<\frac{1}{8b^4}$.
\end{theorem}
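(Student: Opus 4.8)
The plan is to reduce the claim to a single moment-generating-function (MGF) estimate and then feed it into the Cram\'er/Chernoff bound \eqnref{chernoff} established in the preceding proposition. With $\vb$ as in the theorem, the object of interest is the centered random variable $T\LRp{\rb;\ub} = \LRp{\rb^\top\vb}^2 - \nor{\vb}^2$ of \eqnref{misfitRV}; since $\rb$ has identity covariance, $\Expect\LRs{\LRp{\rb^\top\vb}^2} = \nor{\vb}^2$, so $\Expect\LRs{T} = 0$ and $2\LRp{\Jn\LRp{\ub;\rb} - \J\LRp{\ub}}$ is exactly the sample mean $\frac{1}{n}\sum_j T\LRp{\rb_j;\ub}$. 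The rate is $I\LRp{\delta} = \max_t\LRc{t\delta - \ln\Expect\LRs{e^{tT}}}$, so it suffices to produce a bound of the form $\ln\Expect\LRs{e^{tT}} \le \frac{\theta^2 t^2}{2c}$ on a suitable interval of $t$: maximizing $t\delta - \frac{\theta^2 t^2}{2c}$ at $t^\star = c\delta/\theta^2$ then returns precisely $I\LRp{\delta} \ge c\frac{\delta^2}{2\theta^2}$, so that the constant $c$ is read off directly as the degradation factor in the MGF bound.

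The substance is the MGF estimate, which I would build in two stages. First, since the components of $\rb$ are independent, mean-zero, and $b$-subgaussian, the linear form $Y := \rb^\top\vb = \sum_i r_i v_i$ is $b\nor{\vb}$-subgaussian by the usual product-of-MGFs computation (the proxy variances add, $\sum_i b^2 v_i^2 = b^2\nor{\vb}^2$). Second, I would upgrade this subgaussian control of $Y$ to sub-exponential control of the quadratic $T = Y^2 - \nor{\vb}^2$: expanding $\Expect\LRs{e^{tY^2}}$ as a power series and inserting the even-moment bounds implied by subgaussianity of $Y$ yields $\LRp{1 - 2tb^2\nor{\vb}^2}^{-1/2}$-type control for $t$ below $\LRp{2b^2\nor{\vb}^2}^{-1}$. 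Multiplying by the centering factor $e^{-t\nor{\vb}^2}$ and taking logarithms, the leading contribution is of order $b^4\nor{\vb}^4 t^2$, which I would organize into the form $\frac{\theta^2 t^2}{2c}$ with $\theta = \nor{\vb}^2/\sqrt{2}$ and a constant $c < \frac{1}{8b^4}$. The hypothesis $b \ge 1/\sqrt{2}$ enters here to keep $\theta = \nor{\vb}^2/\sqrt{2}$ an admissible proxy compatible with $\Expect\LRs{Y^2} = \nor{\vb}^2$ and with the restricted range of $t$ on which the subgaussian MGF bound (cf.\ the footnote) actually holds.

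The main obstacle is precisely this second stage, because $T$ is only sub-exponential, not globally subgaussian: the bound $\ln\Expect\LRs{e^{tT}} \le \frac{\theta^2 t^2}{2c}$ cannot hold for all $t$ (the MGF of a squared quantity diverges as $t \to \LRp{2b^2\nor{\vb}^2}^{-1}$), so the estimate is necessarily local in $t$. Two points need care. The centering is imperfect --- one subtracts the true mean $\nor{\vb}^2$ while the subgaussian surrogate variance is $b^2\nor{\vb}^2$ --- so a residual linear-in-$t$ term of sign governed by $b^2 - 1$ must be controlled rather than cancelled outright; and, after the Chernoff maximization, I must verify that the optimizer $t^\star = c\delta/\theta^2$ lands inside the interval of validity. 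The latter confines $\delta$ to the small-to-moderate deviation regime, which is exactly the regime relevant to Morozov's discrepancy principle exploited in Theorem \theoref{morthm}; outside it the rate would degrade from quadratic to the genuine linear sub-exponential tail, and it is the assumption $b \ge 1/\sqrt{2}$ together with the explicit constant $c < \frac{1}{8b^4}$ that pins down where the clean Gaussian-type rate survives.
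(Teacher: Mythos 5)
Your skeleton is right and matches the paper at the top level: reduce to the Chernoff/Cram\'er bound \eqnref{chernoff}, note $\Expect\LRs{T}=0$ because the components of $\rb$ have unit variance, and observe that the linear form $Y:=\rb^\top\vb$ is $b\nor{\vb}$-subgaussian by the product-of-MGFs argument (this is the paper's use of \cite[Lemma 2.2]{Matousek08}, with $X=\rb^\top\wbd$, $\wbd=\vb/\nor{\vb}$). The genuine gap is in your second stage. The Gaussian-domination bound $\Expect\LRs{e^{tY^2}}\le\LRp{1-2tb^2\nor{\vb}^2}^{-1/2}$ carries the subgaussian \emph{proxy} variance $b^2\nor{\vb}^2$, so after centering you get $\ln\Expect\LRs{e^{tT}}\le t\LRp{b^2-1}\nor{\vb}^2+b^4t^2\nor{\vb}^4+\cdots$. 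At the Chernoff optimizer $t_\star=\mc{O}\LRp{\delta/(b^4\nor{\vb}^4)}$, the residual linear term compares to $t_\star\delta$ in the ratio $(b^2-1)\nor{\vb}^2/\delta$, which blows up precisely in the theorem's regime $\delta\ll\nor{\vb}^2$ (in Lemma \ref{epsisom} one takes $\delta=\varepsilon\nor{\vb}^2$ with $\varepsilon<1$). So for any fixed $b>1$ your exponent goes negative and the quadratic rate is destroyed; you acknowledge the residual term "must be controlled," but within this route there is no mechanism to control it. And $b>1$ is not an edge case: it is the point of the theorem, since the sparse distributions \eqnref{Li} have $b=\sqrt{s-2\ln s}$, e.g.\ $b\approx 3.8$ for the $95\%$-sparse case $s=20$. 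Your route closes only for $b\le1$, which is exactly the paper's \emph{separate} sharper result stated after the theorem (the $\LRp{1-2t\nor{\vb}^2}^{-1/2}$ computation following \cite{IndykNaor07}), not the theorem you were asked to prove.

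The paper's proof evades the proxy-variance mismatch by never replacing the second moment with its subgaussian surrogate: it expands $\Expect\LRs{e^{t\nor{\vb}^2X^2}}$ as a series and keeps $\Expect\LRs{X^2}=1$ \emph{exact}, so the centering factor $e^{-t\nor{\vb}^2}$ cancels the linear term identically rather than up to a $b^2-1$ defect. Subgaussianity enters only where it is harmless: to bound the fourth moment, $\Expect\LRs{X^4}\le16b^4$ via \cite[p.93]{Stroock11} (producing the $8b^4t^2\nor{\vb}^4$ term), and to bound the series tail $\sum_{k\ge3}$ by pulling out $\LRp{4b^2t\nor{\vb}^2}^3$ and invoking $\Expect\LRs{e^{X^2/(4b^2)}}\le\sqrt{2}$ from \cite[Remark 5.1]{IndykNaor07}, valid for $0\le t\le 1/(4b^2)$ --- this range restriction is where $b\ge1/\sqrt{2}$ is actually used, not (as you conjectured) to keep $\theta$ admissible. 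The result is $\ln\Expect\LRs{e^{tT}}\le 8b^4t^2\nor{\vb}^4+64\sqrt{2}\,b^6t^3\nor{\vb}^6$ with \emph{no} linear term, after which $t_\star=\delta/(8b^4\nor{\vb}^4q)$, $q>1$, together with $\delta\ll\nor{\vb}^2$, yields $I\LRp{\delta}\ge c\,\delta^2/\nor{\vb}^4$ with $0<c<1/(8b^4)$, i.e.\ the claim with $2\theta^2=\nor{\vb}^4$. To repair your draft, replace the global $\LRp{1-2tb^2\nor{\vb}^2}^{-1/2}$ estimate with this exact-low-order-moment expansion; your Chernoff bookkeeping and the validity check on $t_\star$ can then be kept as written.
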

\begin{proof}
Let $\rb \in \R^N$ such that $\rb$ has i.i.d. $b$-subgaussian
components $r_i$, with $b\geq 1/\sqrt{2}$, $\Expect \LRs{r_i}
= 0$, and $\Expect\LRs{r_i^2} = 1$. 
Define $\wbd = \frac{\vb}{\nor{\vb}}$ and $X = \rb ^\top \wbd$. 
Then
\begin{equation}
\eqnlab{mgfmisfit}
\Expect \LRs{e^{tT}} = e^{- t \nor{\vb}^2} \Expect \LRs{e ^{t \nor{\vb}^2X^2}} \qquad \forall t\in \R.
\end{equation}
From \cite[Lemma 2.2]{Matousek08}, $\Expect \LRs{X^2} = 1$ and $X$ is
also $b$-subgaussian.
Then, by \cite[Remark 5.1]{IndykNaor07}, for $0 \leq t \leq \frac{1}{4b^2}$,
\begin{equation}
\eqnlab{sqrt}
\Expect \LRs{e^{tX^2}} \leq \sqrt{2}.
\end{equation}
For $0 < t \leq
  \frac{1}{4b^2\nor{\vb}^2}$, we have
\begin{align*}
\Expect \LRs{e^ {t \nor{\vb}^2 X^2}} &\leq 1 + t\nor{\vb}^2 +
t\nor{\vb}^4 \frac{\Expect \LRs{X^4}}{2} +
\sum^\infty_{k=3} \frac{\LRp{\frac{1}{4b^2}}^k \LRp{4b^2 t
                                       \nor{\vb}^2}^k \Expect \LRs{X^{2k}}}{k!} \\
&\leq 1 + t\nor{\vb}^2 + t^2 \nor{\vb}^4 \frac{\Expect \LRs{X^4}}{2} + 
\LRp{4b^2 t \nor{\vb}^2}^3\sum^\infty_{k=3}
  \frac{\LRp{\frac{1}{4b^2}}^k \Expect \LRs{X^{2k}}}{k!}\\
&\leq 1 + t \nor{\vb}^2 + t^2 \nor{\vb}^4  \frac{\Expect
  \LRs{X^4}}{2}  +\LRp{4b^2 t \nor{\vb}^2}^3 \Expect
  \LRs{e^{\frac{1}{4b^2}X^2}} \\
&\leq 1+ t \nor{\vb}^2 + t^2 \nor{\vb}^4  \frac{\Expect
  \LRs{X^4}}{2}  + 64 \sqrt{2} b^6 t^3 \nor{\vb}^6\\
&\leq 1 + t \nor{\vb}^2 + 8b^4t^2 \nor{\vb}^4 +  64 \sqrt{2} b^6 t^3
  \nor{\vb}^6\\
&\leq e^{t \nor{\vb}^2 + 8b^4t^2 \nor{\vb}^4 +  64 \sqrt{2} b^6 t^3
  \nor{\vb}^6},
\end{align*}
using \eqnref{sqrt} in the fourth inequality and \cite[p.93]{Stroock11}
in the fifth inequality.
Let $t_\star = \frac{\delta}{8b^4 \nor{\vb}^4q}$ where $q>1$. Assuming
$\nor{\vb}^2\gg \delta$, we have that 
\begin{align*}
\Expect \LRs{ e^{t^\star T}} \leq e^{8b^4
                               t_\star^2 \nor{\vb}^4 + 64\sqrt{2} b^6
                               t_\star^3 \nor{\vb}^6} = e^{\frac{\delta^2}{8b^4\nor{\vb}^4q^2} +
  \sqrt{2}\frac{\delta^3}{8b^6\nor{\vb}^6q^3}}.
\end{align*}
Then
\begin{align*}
I \LRp{\delta} &\geq \delta t_\star - \ln{\Expect \LRs{e^{t_\star T}}}
                 \geq \LRp{1-\frac{1}{q}}\frac{\delta^2}{8b^4\nor{\vb}^4q} -
  \sqrt{2}\frac{\delta^3}{8b^6\nor{\vb}^6q^3}  \geq c \frac{\delta}{\nor{\vb}^4},
\end{align*}
where $0<c<\frac{1}{8b^4}$. Taking
$2\theta^2 = \nor{\vb}^4$ concludes the proof.
\end{proof}

A sharper result can be obtained for RMA constructed with
$b$-subgaussian random variables where $b\leq 1$. Note that this
includes the distribution \eqnref{Li} with $s = 1$ (Rademacher) and $s
= 3$ (Achlioptas) by the above theorem. 
Following \cite[(5)]{IndykNaor07}, let $g$ be a standard Gaussian
random variable, independent of all other random variables. Then, we
have that for $0 < t <
\frac{1}{2\nor{\vb}^2}$,
\begin{equation}
\Expect \LRs{e^{t\nor{\vb}^2X}} \leq \Expect_g \LRs{ \prod^N_i e^{b^2 t\nor{\vb}^2 w_i^2 g^2}}
  \leq \Expect_g \LRs{e^{t \nor{\vb}^2 g^2}} = \frac{1}{\sqrt{1-2 t \nor{\vb}^2}}. 
\end{equation}
So from \eqnref{mgfmisfit} we have that 
\begin{equation}
  \Expect\LRs{e^{tT\LRp{\ub,\rb}}} \le \frac{e^{-t\nor{\vb}^2}}{\sqrt{1 - 2 t \nor{\vb}^2}} =
  e^{-t\nor{\vb}^2 -\half\ln\LRp{1 - 2 t \nor{\vb}^2}}.
\end{equation}
Then
\begin{equation}
t \delta - \ln\LRp{ \Expect \LRs{T\LRp{\ub,\rb} }} \ge t \delta +t\nor{\vb}^2 + \half\ln\LRp{1 - 2 t
  \nor{\vb}^2} =: f\LRp{t}.
\end{equation}
Computing the derivative, we have that $f\LRp{t}$ attains a maximum at
\begin{equation}
t_{\mathrm{max}} = \frac{\delta}{2\LRp{\nor{\vb}^4 + \delta \nor{\vb}^2}}.
\end{equation}
Thus, we have
 \begin{align*}
       \max f\LRp{t} &= \frac{\delta^2}{2\LRp{\nor{\vb}^4 +
           \delta \nor{\vb}^2}} + \frac{\delta}{2\LRp{\nor{\vb}^2 + \delta}} + \half
       \ln\LRp{1 - \frac{\delta}{\nor{\vb}^2 + \delta}} \\
       &= \frac{\delta ^2}{2\LRp{\nor{\vb}^4 +
           \delta \nor{\vb}^2}} - \frac{1}{4}\frac{\delta^2}{\LRp{\nor{\vb}^2 +
           \delta}^2} - \frac{1}{6}\frac{\delta^3}{\LRp{\nor{\vb}^2 +
           \delta}^3} - \cdots \\
       & = \frac{\delta^2}{4\LRp{\nor{\vb}^4 +
           \delta \nor{\vb}^2}} + \frac{1}{4}\LRc{ \frac{\delta^2}{\LRp{\nor{\vb}^4 +
             \delta \nor{\vb}^2}}- \frac{\delta^2}{\LRp{\nor{\vb}^2 +
             \delta}^2}} \\
       &- \frac{1}{6}\frac{\delta^3}{\LRp{\nor{\vb}^2 +
           \delta}^3} - \cdots 
       \ge c \frac{\delta^2}{\nor{\vb}^4},
     \end{align*}
where we employed the Taylor expansion in the second equality, and in
the last inequality $c$ is some constant less than $1/4$. Note that the last
inequality holds for $\delta \ll \nor{\vb}^2$ and taking $2 \theta^2 = \nor{\vb}^4$ concludes the proof.

The next theorem is our main result. It guarantees
with high probability that the RMA solution will be a solution of the
original problem under Morozov's discrepancy principle, for relatively
small $n$. We first need the following lemma.
\begin{lemma} 
\label{epsisom}
Let $\vb := \yobsh - \fmaph\LRp{\ub}$.  Suppose that $\rb$ is distributed such
  that the large deviation rate of the RMA error is bounded below by
  $c\frac{\delta^2}{2\theta^2}$ for some $c > 0$ and
  $\theta = \nor{\vb}^2/\sqrt{2}$. Given a cost {\em distortion tolerance} $\varepsilon>0$ and a {\em failure rate} $\beta>0$, let
  \begin{equation}
    \eqnlab{Nlb}
    n \ge \frac{\beta}{c \varepsilon^2}.
  \end{equation}
Then with probability at least $1 - e^{-\beta}$,
\begin{equation}
  \eqnlab{MISFITpreJL}
  \noindent \LRp{1-\varepsilon} \nor{\vb}^2 \le
  \frac{1}{n}\sum_{j=1}^n\LRp{\rb_j^\top\vb}^2
  \le \LRp{1+\varepsilon} \nor{\vb}^2,
\end{equation}
and hence,
\begin{equation}
    \eqnlab{preJL}
 \LRp{1-\varepsilon} \J\LRp{\ub} \le \Jn\LRp{\ub;\rb} \le \LRp{1+\varepsilon} \J\LRp{\ub}.
\end{equation}

\end{lemma}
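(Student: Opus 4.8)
The plan is to read \eqnref{MISFITpreJL} as a two-sided concentration statement and feed it directly into the large deviation bound supplied by the hypothesis. First I would note that the regularization term $\half\nor{\ub - \ub_0}^2_{\Cb}$ appears identically in $\J\LRp{\ub}$ and in $\Jn\LRp{\ub;\rb}$ and therefore cancels in their difference, leaving
\[
2\LRp{\Jn\LRp{\ub;\rb} - \J\LRp{\ub}} = \frac{1}{n}\sum_{j=1}^n\LRp{\rb_j^\top\vb}^2 - \nor{\vb}^2.
\]
Subtracting $\nor{\vb}^2$ throughout \eqnref{MISFITpreJL} shows that this inequality is exactly the event $\snor{\Jn\LRp{\ub;\rb} - \J\LRp{\ub}} \le \frac{\varepsilon}{2}\nor{\vb}^2$, whose probability I will bound from below by controlling its complement.

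Next I would instantiate the large deviation bound \eqnref{LDbound} at the specific deviation $\delta = \varepsilon\nor{\vb}^2$, which makes the half-width $\delta/2$ coincide with $\frac{\varepsilon}{2}\nor{\vb}^2$. The hypothesis gives $I\LRp{\delta} \ge c\frac{\delta^2}{2\theta^2}$ with $\theta = \nor{\vb}^2/\sqrt{2}$, so that $2\theta^2 = \nor{\vb}^4$ and the $\nor{\vb}$ factors cancel: $I\LRp{\delta} \ge c\frac{\varepsilon^2\nor{\vb}^4}{\nor{\vb}^4} = c\varepsilon^2$. This cancellation is the crux of the lemma and the step I expect to matter most, since it is precisely why the rate, and hence the required sample size, depends only on $\varepsilon$ and not on the data dimension $N$ or the size of the misfit $\nor{\vb}$. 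It follows that the complement has probability at most $e^{-n c \varepsilon^2}$; demanding $e^{-n c \varepsilon^2} \le e^{-\beta}$ yields $n \ge \beta/\LRp{c\varepsilon^2}$, which is \eqnref{Nlb}, and under this choice of $n$ the event \eqnref{MISFITpreJL} holds with probability at least $1 - e^{-\beta}$.

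Finally I would pass from \eqnref{MISFITpreJL} to \eqnref{preJL} by reinstating the common, nonnegative regularization term $\mc{R} := \half\nor{\ub - \ub_0}^2_{\Cb} \ge 0$. Halving \eqnref{MISFITpreJL} and adding $\mc{R}$ gives $\LRp{1-\varepsilon}\half\nor{\vb}^2 + \mc{R} \le \Jn\LRp{\ub;\rb} \le \LRp{1+\varepsilon}\half\nor{\vb}^2 + \mc{R}$. Because $\mc{R}\ge 0$ one has $\LRp{1-\varepsilon}\mc{R} \le \mc{R} \le \LRp{1+\varepsilon}\mc{R}$, so the bracketing is preserved when the $\LRp{1\pm\varepsilon}$ factors are distributed across the entire objective, giving $\LRp{1-\varepsilon}\J\LRp{\ub} \le \Jn\LRp{\ub;\rb} \le \LRp{1+\varepsilon}\J\LRp{\ub}$. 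The only subtlety in this last step is that nonnegativity of the regularization term is exactly what allows a relative distortion of the misfit to become a relative distortion of the full cost; no further assumption is required.
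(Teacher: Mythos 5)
Your proof is correct and takes essentially the same route as the paper's, whose entire argument is the single line ``set $\delta = \varepsilon\nor{\vb}^2$ in \eqnref{LDbound}''. You have merely made explicit the details the paper leaves implicit: the cancellation of the regularization term so that $2\snor{\Jn - \J}$ equals the deviation of the randomized misfit, the cancellation $2\theta^2 = \nor{\vb}^4$ that makes the rate $c\varepsilon^2$ independent of $N$ and $\nor{\vb}$, and the nonnegativity of the regularizer needed to pass from \eqnref{MISFITpreJL} to \eqnref{preJL}.
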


\begin{proof}
The proof follows from setting $\delta = \varepsilon \nor{\vb}^2$ in \eqnref{LDbound}.
\end{proof}

This lemma demonstrates a remarkable fact that with $n$ i.i.d. draws
one can reduce the data misfit dimension from $N$ to $n$ while bearing a relative error of 
$\varepsilon = \mc{O} \LRp{1/\sqrt{n}}$ in the cost function, where the reduced dimension $n$ is independent of the dimension
$N$ of the data.
This idea is the basis for data-reduction techniques via variants of the
Johnson-Lindenstrauss Lemma in existing work with random
projections (see 
e.g. \cite{HolubFridrich13,LiuFieguthClausiEtAl12,FowlerDu12}). With
the connection through the randomized misfit
approach, the ubiquitous $N$-independent Monte Carlo factor $\varepsilon = \mc{O} \LRp{1/\sqrt{n}}$ in
Johnson-Lindenstrauss literature can thus be understood by reframing the
application of a random projection as a Monte Carlo method in the
form of \eqnref{MISFITpreJL}.

Unlike other applications of the Monte Carlo method, e.g. Markov chain
Monte Carlo, in which $n$ must be large to be successful, $n$ can be moderate
or small for inverse problems, depending on the noise $\etab$ in \eqnref{observation}.
In the following theorem we show this is possible via Morozov's discrepancy principle \cite{Morozov66}. To
avoid over-fitting the noise, from \eqnref{pointwiseObs} one seeks a MAP point $\umap$ such that 
$\snor{\d_j - \w\LRp{\x_j;\umap}} \approx \sigma$, i.e. $\nor{\yobsh
  - \fmaph\LRp{\umap}}^2 \approx N$. 
We say that an inverse solution $\umap$ satisfies Morozov's discrepancy principle with parameter $\tau$ if
\begin{equation}
  \nor{\yobsh - \fmaph\LRp{\umap}}^2 = \tau N
\end{equation}
for some $\tau \approx 1 $.
\begin{theorem}[Statistical Morozov's discrepancy principle]
  \theolab{morthm}
  Suppose that the conditions of Lemma \ref{epsisom} are met. If
  $\umapn$ is a discrepancy principle-satisfying solution for the RMA cost, i.e.,
  \begin{equation}
  \mc{J}_n\LRp{\umapn,\rb} := \frac{1}{n}\sum_{j=1}^n\LRs{\rb_j^\top\LRp{{\yobsh} -
             \fmaph\LRp{\umapn}}}^2= \tau 'N
  \end{equation} 
for some $\tau ' \approx 1$, then with probability at least $1 -
e^{-\beta}$, $\umapn$ is also a solution for the original problem that satisfies Morozov's discrepancy principle with parameter
  $\tau $, i.e.
  \begin{equation}
 \mc{J}\LRp{\umapn} := \nor{\yobsh - \fmaph\LRp{\umap_n}}^2 = \tau N.
  \end{equation}
  for $\tau \in \LRs{ \frac{\tau '}{1 +\varepsilon},
    \frac{{\tau '}}{1 -\varepsilon}}$.
\end{theorem}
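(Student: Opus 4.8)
The plan is to apply Lemma~\ref{epsisom} at the single point $\ub = \umapn$ and then recover the admissible range of $\tau$ by elementary manipulation of the resulting two-sided bound. First I would set $\vb := \yobsh - \fmaph\LRp{\umapn}$, so that $\nor{\vb}^2 = \mc{J}\LRp{\umapn}$ is exactly the quantity whose size relative to $N$ we wish to control. Since the hypotheses of Lemma~\ref{epsisom} are assumed, invoking \eqnref{MISFITpreJL} with this $\vb$ gives, with probability at least $1 - e^{-\beta}$,
\[
\LRp{1-\varepsilon}\nor{\vb}^2 \;\le\; \frac{1}{n}\sum_{j=1}^n\LRp{\rb_j^\top\vb}^2 \;\le\; \LRp{1+\varepsilon}\nor{\vb}^2 .
\]

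Second, I would identify the three pieces with the discrepancy data. The central sum is precisely $\mc{J}_n\LRp{\umapn,\rb}$, which equals $\tau' N$ by hypothesis, while the outer quantity is $\nor{\vb}^2 = \mc{J}\LRp{\umapn} = \tau N$. Substituting these and cancelling the common factor $N>0$ collapses the display to $\LRp{1-\varepsilon}\tau \le \tau' \le \LRp{1+\varepsilon}\tau$. The left inequality rearranges to $\tau \le \tau'/\LRp{1-\varepsilon}$ and the right to $\tau \ge \tau'/\LRp{1+\varepsilon}$, which together give $\tau \in \LRs{\tau'/\LRp{1+\varepsilon},\; \tau'/\LRp{1-\varepsilon}}$, the claimed interval; the probability $1-e^{-\beta}$ is inherited verbatim from the lemma.

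The hard part is not the algebra but justifying that the pointwise estimate \eqnref{MISFITpreJL} may legitimately be used at $\umapn$. The minimizer $\umapn$ of the randomized cost is itself a function of the draws $\LRc{\rb_j}_{j=1}^n$, whereas Lemma~\ref{epsisom} is proved for a $\vb$ fixed \emph{independently} of the projection, and one cannot in general condition a pointwise concentration bound on a data-dependent point for free. I would address this by upgrading Lemma~\ref{epsisom} to a guarantee that holds uniformly over the relevant set of misfit vectors — exactly the Johnson--Lindenstrauss embedding flavour emphasised earlier — so that it applies in particular to the realized $\vb = \yobsh - \fmaph\LRp{\umapn}$. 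Concretely I would either run a union-bound / $\varepsilon$-net argument over the effectively low-dimensional range of $\fmaph$, absorbing the logarithmic cardinality penalty into $\beta$ and correspondingly into the lower bound \eqnref{Nlb} on $n$, or, under the more literal reading of the statement, regard the lemma as applied at the one realized point $\umapn$, with $e^{-\beta}$ denoting the failure probability of that single application.
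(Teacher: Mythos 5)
Your algebraic core coincides exactly with the paper's own proof, which consists of the single line that the claim is ``a direct consequence of \eqnref{MISFITpreJL}'': apply the two-sided bound at $\vb = \yobsh - \fmaph\LRp{\umapn}$, identify the middle sum with $\tau' N$ and $\nor{\vb}^2$ with $\tau N$, cancel $N>0$, and rearrange to get $\tau \in \LRs{\tau'/\LRp{1+\varepsilon},\,\tau'/\LRp{1-\varepsilon}}$ with the probability $1-e^{-\beta}$ inherited from the lemma. So at the level of the stated argument you have reproduced the intended proof.

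What distinguishes your write-up is the third paragraph, and you are right to raise it: $\umapn$ is a function of the draws $\LRc{\rb_j}_{j=1}^n$, while Lemma~\ref{epsisom} is established for a $\vb$ fixed in advance of the projection (its parameter $\theta$ even depends on $\nor{\vb}^2$), so applying the pointwise bound at a draw-dependent point is not automatic. The paper's one-line proof passes over this silently. Your proposed repair --- a union bound or $\varepsilon$-net over the effectively low-dimensional set of realizable misfit vectors, absorbing the logarithmic cardinality penalty into $\beta$ and hence into the lower bound \eqnref{Nlb} on $n$ --- is precisely the mechanism the paper itself uses in neighboring results: Theorem~\theoref{JLlemma} takes a union bound over pairs to get $n = \mc{O}\LRp{\varepsilon^{-2}\ln m}$, and part (ii) of Theorem~\theoref{error} requires \eqnref{postJL} on $m$ basis vectors spanning the column space of $\Fh$, giving $n = \mc{O}\LRp{\varepsilon^{-2}\LRp{2+\alpha}\ln m}$. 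For a linear forward map this makes your uniform version rigorous at that logarithmic cost; for nonlinear $\F$ a net over the image set $\LRc{\yobsh - \fmaph\LRp{\ub}}$ would be needed, which is where genuine additional work lies. Your fallback ``literal'' reading --- the bound invoked once at the realized point --- is what the paper implicitly does, but it conflates a guarantee at a fixed point with one at a random point, so your strengthened reading is the more defensible formulation. In short: same route as the paper, plus an honest identification and repair of a gap the paper leaves implicit.
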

\begin{proof}
  The claim is a direct consequence of \eqnref{MISFITpreJL}.
\end{proof}

\subsection{Other theoretical results}
\seclab{other}

We are now in the position to show a different proof of the
Johnson-Lindenstrauss embedding theorem using a stochastic programming derivation of the RMA. Following \cite{Sarlos06}, we define a map $\mc{S}$ from $\R^n$ to
$\R^N$, where $n \ll N$, to be a  Johnson-Lindenstrauss transform (JLT) if
\begin{equation}
  \LRp{1-\varepsilon} \nor{\vb}^2 \le \nor{\mc{S}\vb}^2 \le
    \LRp{1+\varepsilon} \nor{\vb}^2,
\end{equation}
 holds with some probability $p = p\LRp{n,\varepsilon}$, where $\varepsilon>0$.

\begin{theorem}[Johnson-Lindenstrauss embedding theorem
  \cite{Dirksen15,Matousek08,IndykNaor07}] \theolab{JLlemma} Suppose that $\rb$ is distributed such
  that the large deviation rate of the RMA error is bounded below by
  $c\frac{\delta^2}{2\theta^2}$ for some $c > 0$ and some $\theta$. Let $0<\varepsilon<1$, $\vb_i
  \in \R^N, i = 1,\ldots,m$, and $n = \mc{O}\LRp{\varepsilon^{-2}\ln
    m}$. Then
  there exists a map $\mc{F}:\R^N \to \R^n$ such that
  \begin{equation}
    \eqnlab{JLin}
    \LRp{1-\varepsilon} \nor{\vb_i - \vb_j}^2 \le \nor{\mc{F}\LRp{\vb_i}-\mc{F}\LRp{\vb_j}}^2 \le
    \LRp{1+\varepsilon} \nor{\vb_i - \vb_j}^2 \quad \forall i,j.
  \end{equation}
\end{theorem}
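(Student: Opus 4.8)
The plan is to realize the embedding $\mc{F}$ as the very random projection already underlying the RMA, and then to promote the single-vector guarantee of Lemma \ref{epsisom} to a simultaneous guarantee over all pairs via a union bound and the probabilistic method. Concretely, I would draw $\rb_1,\ldots,\rb_n$ i.i.d.\ from the prescribed distribution and define $\mc{F}\LRp{\vb} := \frac{1}{\sqrt{n}}\LRs{\rb_1,\ldots,\rb_n}^\top\vb \in \R^n$, which is exactly the reduced-misfit operator of \eqnref{randprob}, so that $\nor{\mc{F}\LRp{\vb}}^2 = \frac{1}{n}\sum_{j=1}^n\LRp{\rb_j^\top\vb}^2$. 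By linearity $\mc{F}\LRp{\vb_i}-\mc{F}\LRp{\vb_j} = \mc{F}\LRp{\vb_i-\vb_j}$, so the two-sided bound \eqnref{JLin} for a fixed pair is nothing other than the norm-preservation statement \eqnref{MISFITpreJL} of Lemma \ref{epsisom} applied to the difference vector $\vb_i-\vb_j$.

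The crucial point I would emphasize is the scale invariance of the per-pair failure probability. Following the proof of Lemma \ref{epsisom}, set $\delta = \varepsilon\nor{\vb_i-\vb_j}^2$; since $\theta = \nor{\vb_i-\vb_j}^2/\sqrt{2}$, the large deviation rate obeys $I\LRp{\delta} \ge c\,\delta^2/\LRp{2\theta^2} = c\varepsilon^2$, a quantity \emph{independent} of the particular vector. Consequently, for each of the $\binom{m}{2}$ pairs the bad event that \eqnref{JLin} fails carries probability at most $e^{-n c \varepsilon^2}$, with the identical bound for every pair regardless of the magnitudes $\nor{\vb_i-\vb_j}$.

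The remaining steps are the standard union-bound argument. The probability that \eqnref{JLin} fails for \emph{some} pair is at most $\binom{m}{2} e^{-n c \varepsilon^2}$, and I would choose $n$ so that this is strictly less than one; then a single realization $\LRc{\rb_j}_{j=1}^n$ satisfies \eqnref{JLin} for all pairs with strictly positive probability, and the probabilistic method yields the existence of at least one such realization, whose associated $\mc{F}$ is the desired embedding. The dimension count follows since $\binom{m}{2} e^{-n c \varepsilon^2} < 1$ holds as soon as $n c \varepsilon^2 > \ln\binom{m}{2}$, i.e.\ $n = \mc{O}\LRp{\varepsilon^{-2}\ln m}$, matching the hypothesis.

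The only genuine obstacle is guaranteeing that a \emph{single} $n$ works uniformly across all $\binom{m}{2}$ difference vectors; this is precisely what the scale invariance above resolves, since fixing the relative distortion $\varepsilon$ renders the exponent $c\varepsilon^2$ independent of $\nor{\vb_i-\vb_j}$. Everything else reduces to Lemma \ref{epsisom} and a counting bound, so no estimate beyond those already established is required.
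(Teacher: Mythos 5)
Your proposal is correct and is essentially the paper's own argument: you construct the identical map $\mc{F}\LRp{\vb} = \frac{1}{\sqrt{n}}\LRs{\rb_1,\ldots,\rb_n}^\top\vb$, reduce each pair to Lemma \ref{epsisom} applied to the difference vector (the paper's inequality \eqnref{JLineq}), and finish with the same union bound over all pairs yielding $n = \mc{O}\LRp{\varepsilon^{-2}\ln m}$. Your explicit remark on scale invariance of the per-pair failure exponent (since $\theta = \nor{\vb_i-\vb_j}^2/\sqrt{2}$ makes $I\LRp{\delta} \ge c\varepsilon^2$ independent of the pair) and your probabilistic-method phrasing are just slightly more careful renderings of steps the paper leaves implicit, with only immaterial differences in constants.
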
 
\begin{proof}
The conditions of Lemma \ref{epsisom} hold, thus for a given $\vb
\in \R^N$, note that \eqnref{MISFITpreJL} is equivalent to
\begin{equation}
\eqnlab{JLineq}
 \LRp{1-\varepsilon} \nor{\vb}^2 \le \nor{\mb{\Sigma}\vb}^2 \le \LRp{1+\varepsilon} \nor{\vb}^2,
\end{equation}
where
\begin{equation}
  \Sigb := \frac{1}{\sqrt{n}}\LRs{\rb_1,\ldots,\rb_n}^\top.
\end{equation}
Define  $\mc{F}\LRp{\vb} := \Sigb\vb$. Inequality \eqnref{JLin} is then a direct consequence of \eqnref{JLineq} for a pair $\LRp{\vb_i,\vb_j}$ with probability at least $1 - e^{-\frac{c}{2}   n\varepsilon^2}$. 
Using an union bound over all pairs, claim \eqnref{JLin} holds for
any pair with probability at least $1 - m^{-\alpha}$ if  $ n \ge c
 \frac{\LRp{2+\alpha}}{\varepsilon^2}\ln m$.
\end{proof}

As discussed above, $\Jn\LRp{\ub;\rb}$ is an unbiased estimator of $\J\LRp{\ub}$. 
It is therefore
reasonable to expect that $\Jn^\star := \min_\ub\Jn\LRp{\ub;\rb}$ converges to $\J^\star :=\min_\ub\J\LRp{\ub}$. 
The following result \cite[Propositions 5.2 and
  5.6]{ShapiroDentchevaRuszczynski09} states that under mild conditions
$\Jn^\star$ in fact converges to $\J^\star$. It is not unbiased, but is
however downward biased.

\begin{proposition}
  Assume that $\Jn\LRp{\ub;\rb}$ converges to $\J\LRp{\ub}$ with probability $1$ uniformly in $\ub$, then $\Jn^\star$ converges to $\J^\star$ with probability $1$. 
  Furthermore, it holds that 
\begin{equation}
\Expect\LRs{\Jn^\star} \le \Expect\LRs{\J_{n+1}^\star} \le \J^\star,
\end{equation}
that is, $\Jn^\star$ is a {\em downward-biased estimator} of $\J^\star$.
  \propolab{optValConvergence}
\end{proposition}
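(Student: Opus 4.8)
The plan is to split the statement into its two genuinely different halves: the almost-sure convergence of the optimal values, and the chain of bias inequalities $\Expect\LRs{\Jn^\star} \le \Expect\LRs{\J_{n+1}^\star} \le \J^\star$. Throughout I use that the coercive Tikhonov penalty guarantees the infima are attained, so I may legitimately write $\min_\ub$ and refer to the true minimizer $\umap$. For the convergence claim I would first record that minimization is nonexpansive in the uniform norm, i.e.
\[
\snor{\Jn^\star - \J^\star} = \snor{\min_\ub \Jn\LRp{\ub;\rb} - \min_\ub \J\LRp{\ub}} \le \sup_\ub \snor{\Jn\LRp{\ub;\rb} - \J\LRp{\ub}}.
\]
This follows from the one-sided estimate $\Jn\LRp{\ub;\rb} \le \J\LRp{\ub} + \sup_{\ub'}\snor{\Jn\LRp{\ub';\rb}-\J\LRp{\ub'}}$, minimizing over $\ub$, and symmetry. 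By the hypothesis that $\Jn\LRp{\ub;\rb}$ converges to $\J\LRp{\ub}$ with probability one uniformly in $\ub$, the right-hand side tends to $0$ almost surely, so $\Jn^\star \to \J^\star$ almost surely.

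For the upper bound $\Expect\LRs{\Jn^\star} \le \J^\star$ I would evaluate the random objective at the deterministic true minimizer: pointwise in $\rb$ one has $\Jn^\star = \min_\ub \Jn\LRp{\ub;\rb} \le \Jn\LRp{\umap;\rb}$. Taking expectations and invoking the unbiasedness $\Expect\LRs{\Jn\LRp{\ub;\rb}} = \J\LRp{\ub}$ noted earlier in the paper yields $\Expect\LRs{\Jn^\star} \le \Expect\LRs{\Jn\LRp{\umap;\rb}} = \J\LRp{\umap} = \J^\star$.

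The monotonicity $\Expect\LRs{\Jn^\star} \le \Expect\LRs{\J_{n+1}^\star}$ is the crux. Write the stochastic part of the $(n+1)$-sample objective as $\frac{1}{n+1}\sum_{j=1}^{n+1} g_j\LRp{\ub}$ with $g_j\LRp{\ub} := \frac{1}{2}\LRs{\rb_j^\top\LRp{\yobsh - \fmaph\LRp{\ub}}}^2$, the remaining Tikhonov term being deterministic. The key is the leave-one-out identity: each $g_j$ appears in exactly $n$ of the $n+1$ averages of size $n$ obtained by deleting a single draw, so averaging those leave-one-out means reproduces $\frac{1}{n+1}\sum_{j} g_j\LRp{\ub}$; adding back the unchanged Tikhonov term gives $\J_{n+1}\LRp{\ub;\rb} = \frac{1}{n+1}\sum_{k=1}^{n+1}\J_n^{(k)}\LRp{\ub}$, where $\J_n^{(k)}$ is the $n$-sample objective omitting the $k$-th draw. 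Minimizing over $\ub$ and using that the minimum of a sum dominates the sum of the minima, $\J_{n+1}^\star \ge \frac{1}{n+1}\sum_{k=1}^{n+1} \min_\ub \J_n^{(k)}\LRp{\ub}$. Taking expectations and using the i.i.d. exchangeability of the draws, each leave-one-out minimum has the same expectation as $\Jn^\star$, so the right-hand side equals $\Expect\LRs{\Jn^\star}$, closing the chain.

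I expect this monotonicity step to be the main obstacle: it rests on the combinatorial leave-one-out decomposition, on the legitimacy of interchanging minimization with the averaging over subsamples, and on exchangeability to identify the $n+1$ leave-one-out optimal values as sharing a common expectation. By contrast, the convergence and upper-bound steps are routine once the nonexpansiveness of $\min$ and the unbiasedness of $\Jn$ are in hand.
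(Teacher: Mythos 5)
Your proposal is correct, but it is worth knowing that the paper itself offers no argument here: the proposition is imported verbatim, with the proof delegated to Propositions 5.2 and 5.6 of \cite{ShapiroDentchevaRuszczynski09}. What you have written is, in effect, a faithful reconstruction of the proofs behind that citation. Your first step, the nonexpansiveness of the infimum in the uniform norm, $\snor{\Jn^\star - \J^\star} \le \sup_\ub \snor{\Jn\LRp{\ub;\rb} - \J\LRp{\ub}}$, is exactly the mechanism of the cited Proposition 5.2; your bound $\Expect\LRs{\Jn^\star} \le \Expect\LRs{\Jn\LRp{\umap;\rb}} = \J^\star$ via unbiasedness, together with the leave-one-out identity $\J_{n+1}\LRp{\ub;\rb} = \frac{1}{n+1}\sum_{k=1}^{n+1}\J_n^{(k)}\LRp{\ub}$, the inequality ``min of an average dominates the average of the mins,'' and exchangeability of the i.i.d.\ draws, is the classical Mak--Morton--Wood argument reproduced as Proposition 5.6. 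Your combinatorics check out: each $g_j$ occurs in exactly $n$ of the $n+1$ leave-one-out averages, and the deterministic Tikhonov term passes through the averaging unchanged, so the decomposition is exact and the inequality runs in the right direction. What your route buys over the paper's citation is self-containedness and an explicit verification that the ``mild conditions'' of the cited results hold in this specific setting: after discretization $\ub \in \R^m$, the coercive prior term $\half\nor{\ub-\ub_0}^2_{\Cb}$ guarantees attainment of every minimum, the draws $\LRc{\rb_j}_{j=1}^{n+1}$ are i.i.d.\ so the $n+1$ leave-one-out optimal values are identically distributed, and unbiasedness of $\Jn$ was established earlier in the paper. One minor point to make explicit for full rigor: before taking expectations you should note that $\Jn^\star$ is a measurable, integrable random variable --- measurability because it is an infimum of a continuous random function over a separable space, and integrability from $0 \le \Jn^\star \le \Jn\LRp{\umap;\rb}$, whose expectation $\J^\star$ is finite.
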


Stochastic programming theory gives a stronger characterization of
this convergence. One can show that $\umapn$ converges weakly to 
$\umap$ with an $n^{-\half}$ rate. 
If $\J\LRp{\ub}$ is convex with finite value, then $\umapn = \umap$ 
with probability exponentially converging to $1$.
See Chapter 5 in \cite{ShapiroDentchevaRuszczynski09} for details.
For a linear forward map $\F\LRp{\ub} = \F\ub$, that is, $\J\LRp{\ub}$ is
quadratic, we can derive a bound on the solution error using the
spectral norm of $\F$.
\begin{theorem}
 \theolab{error}
 Suppose the conditions of Lemma \ref{epsisom} hold. Let $m:=\mathrm{rank}(\Fh)$. 
  Then 
\begin{itemize}
\item[i)] $ 
      \LRp{1-\varepsilon}\J^\star \leq \Jn^\star \le \LRp{1 +
        \varepsilon} \J^\star$, and 
\item[ii)] if $\F$ is linear, then with probability at least $1 -
  m^{-\alpha}$
\end{itemize}
\begin{equation}
\eqnlab{uIn}
\nor{\umapn - \umap} \le \frac{\varepsilon}{\sigma_{\mathrm{min}}^2\LRp{\Gb}}\LRp{\nor{\Fh}\nor{\umap} + \nor{\dbh}}\nor{\Fh},
\end{equation}
  where $\Gb := \LRp{\Fh^\top\Sigb\Sigb^\top\Fh + \Cb^{-1}}^{\half}$,  and $n
  = \mc{O}\LRp{\varepsilon^{-2}\LRp{2+\alpha}\ln m}$.
\end{theorem}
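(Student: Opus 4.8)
The plan is to treat the two assertions separately: part (i) follows directly from the cost-distortion estimate \eqnref{preJL} of Lemma \ref{epsisom}, while part (ii) exploits the fact that linearity of $\F$ gives a closed-form expression for the minimizer error. For part (i) I would run the standard sandwich argument between the two minimizers. Since $\umapn$ minimizes $\Jn$, we have $\Jn^\star = \Jn\LRp{\umapn;\rb} \le \Jn\LRp{\umap;\rb}$, and applying the upper half of \eqnref{preJL} at the \emph{deterministic} point $\ub=\umap$ yields $\Jn^\star \le \LRp{1+\varepsilon}\J\LRp{\umap} = \LRp{1+\varepsilon}\J^\star$. For the lower bound, the lower half of \eqnref{preJL} gives $\Jn\LRp{\ub;\rb} \ge \LRp{1-\varepsilon}\J\LRp{\ub} \ge \LRp{1-\varepsilon}\J^\star$, and minimizing over $\ub$ gives $\Jn^\star \ge \LRp{1-\varepsilon}\J^\star$. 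The only subtlety is that this last estimate must hold at the \emph{random} minimizer $\umapn$; this is secured on the same event as the uniform (subspace) embedding established for part (ii).

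For part (ii), linearity makes $\J$ and $\Jn$ quadratic, so their minimizers solve the normal equations
\[ \LRp{\Fh^\top\Fh + \Cb^{-1}}\umap = \Fh^\top\dbh + \Cb^{-1}\ub_0, \qquad \Gb^2\,\umapn = \Fh^\top\Sigb^\top\Sigb\,\dbh + \Cb^{-1}\ub_0, \]
where $\Gb^2 = \Fh^\top\Sigb^\top\Sigb\Fh + \Cb^{-1}$ is the randomized Hessian. Subtracting these two equations and regrouping so that the prior blocks $\Cb^{-1}$ cancel, I expect to arrive at the clean identity
\[ \umapn - \umap = \Gb^{-2}\Fh^\top\LRp{\Sigb^\top\Sigb - \I}\vb^\star, \qquad \vb^\star := \dbh - \Fh\umap, \]
in which the randomness enters only through the operator $\Sigb^\top\Sigb - \I$ acting on the fixed optimal misfit $\vb^\star$.

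Taking spectral/Euclidean norms and using $\nor{\Gb^{-2}} = \sigma_{\mathrm{min}}^{-2}\LRp{\Gb}$ (as $\Gb$ is symmetric positive definite) together with $\nor{\Fh^\top}=\nor{\Fh}$ gives
\[ \nor{\umapn - \umap} \le \frac{1}{\sigma_{\mathrm{min}}^2\LRp{\Gb}}\,\nor{\Fh}\,\nor{\LRp{\Sigb^\top\Sigb - \I}\vb^\star}. \]
Because $\vb^\star$ is deterministic, the triangle inequality immediately yields $\nor{\vb^\star} \le \nor{\Fh}\nor{\umap} + \nor{\dbh}$, which already supplies the parenthetical factor appearing in \eqnref{uIn}. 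Thus the entire claim reduces to showing that $\nor{\LRp{\Sigb^\top\Sigb - \I}\vb^\star} \le \varepsilon\nor{\vb^\star}$ with the stated probability.

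This final estimate is the main obstacle, and it is genuinely stronger than what Lemma \ref{epsisom} delivers: \eqnref{MISFITpreJL} controls only the \emph{scalar} quadratic form $\vb^{\star\top}\LRp{\Sigb^\top\Sigb - \I}\vb^\star$, i.e. the error component along $\vb^\star$, whereas here I need the full vector norm. The plan is to upgrade norm preservation to inner-product preservation by polarization, writing $\nor{\LRp{\Sigb^\top\Sigb - \I}\vb^\star}$ as the supremum of $\LRa{\Sigb\mb{w},\Sigb\vb^\star} - \LRa{\mb{w},\vb^\star}$ over unit vectors $\mb{w}$, and observing that in \eqnref{uIn} only directions $\mb{w}$ lying in the $m$-dimensional range of $\Fh^\top$ actually contribute. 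It therefore suffices to preserve inner products between $\vb^\star$ and the column space of $\Fh$, which I would obtain by applying Theorem \theoref{JLlemma} to a basis of that range (for instance the left singular vectors of $\Fh$) adjoined with $\vb^\star$, taking a union bound over the resulting $m$ vectors. This is precisely what produces both the reduced dimension $n = \mc{O}\LRp{\varepsilon^{-2}\LRp{2+\alpha}\ln m}$ and the success probability $1 - m^{-\alpha}$. I expect the delicate point to be the bookkeeping of the distortion constant through the polarization and basis steps, so that the preserved inner products reassemble into the clean factor $\varepsilon\nor{\vb^\star}$ rather than one degraded by a dimension-dependent multiple.
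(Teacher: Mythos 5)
Your proposal is correct and follows essentially the same route as the paper: part (i) is the identical sandwich argument, and part (ii) likewise subtracts the two normal equations, bounds the randomized Hessian below via $\sigma_{\mathrm{min}}^2\LRp{\Gb}$, and reduces everything to the JL-type vector estimate $\nor{\LRp{\Sigb^\top\Sigb - \I}\vb} \le \varepsilon\nor{\vb}$ secured by a union bound over $m$ basis vectors of the column space of $\Fh$ --- the paper merely phrases the error equation as an energy estimate tested against $\bs{\Delta}$ and applies its estimate \eqnref{postJL} separately to $\Fh\umap$ and $\dbh$, where you invert $\Gb^2$ and apply it once to $\vb^\star := \dbh - \Fh\umap$. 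The ``delicate point'' you flag at the end (avoiding a dimension-dependent degradation when reassembling inner-product preservation over the basis) is genuine, but it is equally present in the paper's proof, which simply asserts \eqnref{postJL} as a ``straightforward variant'' of \eqnref{JLineq} without carrying out the polarization and basis bookkeeping, so your version is, if anything, more candid about this step.
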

\begin{proof}
  The first assertion follows from \eqnref{preJL} and the definition of
  $\umapn$ \eqnref{utmap}, indeed
  \begin{equation}
  \Jn^\star = \Jn\LRp{\umapn} \le \J\LRp{\umap} \le
  \LRp{1+\varepsilon} \J\LRp{\umap}= \LRp{1+\varepsilon} \J^\star,
  \end{equation}
  and the other direction is similar.
  For the second assertion, note that $\umap$ and $\umapn$ are solutions of the following first optimality conditions
  \begin{subequations}
    \eqnlab{optimality}
    \begin{align}
      \LRp{\Fh^\top\Fh + \Cb^{-1}}\ub^\star &=  \Fh^\top\dbh + \Cb^{-1}\ub_0, \\
      \LRp{\Fh^\top\Sigb\Sigb^\top\Fh + \Cb^{-1}}\umapn &=  \Fh^\top\Sigb\Sigb^\top\dbh + \Cb^{-1}\ub_0.
    \end{align}
  \end{subequations}

  Define $\bs{\Delta} := {\umap-\umapn}$.  
  An algebraic manipulation of \eqnref{optimality} gives
  \begin{equation}
  \LRp{\Fh^\top\Sigb\Sigb^\top\Fh + \Cb^{-1}}\bs{\Delta} =  \LRp{\Fh^\top\Sigb\Sigb^\top\Fh - \Fh^\top\Fh}\umap + 
  \Fh^\top\dbh - \Fh^\top\Sigb\Sigb^\top\dbh.
  \end{equation}
  Taking the inner product of both sides with $\bs{\Delta}$ we have
 \begin{multline}
    \eqnlab{energy}
    \LRa{\bs{\Delta}, \LRp{\Fh^T\Sigb\Sigb^T\Fh + \Cb^{-1}}\bs{\Delta}} = \LRa{\Fh\bs{\Delta}, \Sigb\Sigb^T\Fh\ub^\star - \Fh\ub^\star} \\
    + \LRa{\Fh\bs{\Delta}, \dbh - \Sigb\Sigb^T\dbh}.
  \end{multline}
   Then we can bound the left-hand side of \eqnref{energy}:
  \begin{equation}
    \eqnlab{energyLHS}
    \LRa{\bs{\Delta}, \LRp{\Fh^\top\Sigb\Sigb^\top\Fh + \Cb^{-1}}\bs{\Delta}} \ge \sigma_{\mathrm{min}}^2\LRp{\Gb} \bs{\Delta}^2. 
  \end{equation}
  To bound terms on right hand side of \eqnref{energy}, we need the following straightforward variant of \eqnref{JLineq}, i.e. $\forall \vb \in \R^N$ and $n = \mc{O}\LRp{\varepsilon^{-2}}$:
  \begin{equation}
    \eqnlab{postJL}
    \nor{\Sigb\Sigb^\top\vb - \vb} \le \varepsilon\nor{\vb}.
  \end{equation}
  Using the Cauchy-Schwarz inequality we have
  \begin{subequations}
    \eqnlab{energyRHS2}
    \begin{align}
      \LRa{\Fh\bs{\Delta}, \Sigb\Sigb^T\Fh\ub^\star - \Fh\ub^\star} &\le \varepsilon\nor{\Fh}^2\nor{\bs{\Delta}}\nor{\ub^\star},\\
      \LRa{\Fh\bs{\Delta}, \dbh - \Sigb\Sigb^T\dbh} &\le \varepsilon \nor{\Fh}\nor{\bs{\Delta}}\nor{\dbh},
    \end{align}
      \end{subequations}
      where we have used \eqnref{postJL} and definition of matrix norm.
      Next, combining \eqnref{energyRHS2} and \eqnref{energyLHS} ends the proof.
  
     Note that for inequalities in \eqnref{energyRHS2} to be valid,
      it is sufficient to choose $n, \alpha, \varepsilon$  such that
      \eqnref{postJL} is valid for $m$ basis vectors spanning the
      column space of $\Fh$, and hence $n =
      \mc{O}\LRp{\varepsilon^{-2}\LRp{2+\alpha}\ln m}$ by the union
      bound. 
\end{proof}

\begin{remark} The bound in \eqnref{uIn} is not a unique estimation. One can first rewrite $\J\LRp{\ub}$ and $\Jn\LRp{\ub;\rb}$ as
  
  \begin{eqnarray*}
    \J\LRp{\ub} &= \halft\nor{ 
      \LRs{
        \begin{array}{c}
          \dbh\\
          \Cb^{-1/2}\ub_0
        \end{array}
      }
      -\LRs{\begin{array}{c}
          \Fh\\
          \Cb^{-1/2}
        \end{array}
      }
      \ub }^2, \\
    \Jn\LRp{\ub;\rb} &= \halft\nor{ 
      \LRs{\begin{array}{cc}
          \bs{\Sigma}^\top & 0\\
          0 & \I
      \end{array}}
      \LRc{
        \LRs{
          \begin{array}{c}
            \dbh\\
            \Cb^{-1/2}\ub_0
          \end{array}
        }
        -\LRs{\begin{array}{c}
            \Fh\\
            \Cb^{-1/2}
          \end{array}
        }
        \ub }}^2.
  \end{eqnarray*}

  If  $\Sigb$ is a Johnson-Lindenstrauss transform, then 
  $\mc{S}:=\left[\begin{array}{cc} \bs{\Sigma}^\top & 0\\ 0 &
      \I \end{array}\right]$ is also a JLT with the same parameters: 

  \begin{equation}
  \nor{\mc{S} \left[\begin{array}{c}
        \vb\\
        \wbd \end{array}\right] }^2 = \nor{\bs{\Sigma} \vb}^2 + \nor{\wbd}^2 \leq
  (1+\varepsilon)\nor{\vb}^2 +\nor{\wbd}^2 \leq
  (1+\varepsilon)\nor{\left[\begin{array}{c}
        \vb\\
        \wbd \end{array}\right] }^2.
  \end{equation}
  Applying \cite[Theorem 12]{Sarlos06}, we conclude that with probability at least $1/3$,
  \begin{equation}
  \nor{\umapn - \umap} \le \frac{\varepsilon}{\lambda_{min}}\sqrt{\J^\star},
  \end{equation}
  where $\lambda_{min}$ is the minimum nonzero singular value of $\LRs{\begin{array}{c}
      \Fh^\top,
      \Cb^{-1/2}
    \end{array}
  }^\top$.
\end{remark}

\subsection{Data-scalability and cost complexity estimate}
\seclab{costanalysis}


This section presents a qualitative discussion of the computational complexity and
scalability of the randomized misfit approach. Numerical evidence
of scalability to large data dimensions is presented in Section \secref{scalability}.
For concreteness and ease of comparison, 
a Newton-type optimization method is assumed. The theory in Sections
\secref{validity} and \secref{other} is independent of the solver used. 

The cost complexity of solving the randomized problem \eqnref{randprob} is
measured in number of PDE solves, i.e. solves of the forward or adjoint PDE and
incremental variants. This characterization of complexity is agnostic to the
specific governing forward PDE or PDE solver. For nontrivial forward
problems, the total runtime of
MAP point computation and uncertainty quantification is overwhelmingly
dominated by the PDE solves; the cost of linear algebra is negligible in comparison \cite{Bui-ThanhBursteddeGhattasEtAl12,Bui-ThanhGhattasMartinEtAl13,PetraMartinStadlerEtAl14,IsaacPetraStadlerEtAl15,AlexanderianPetraStadlerEtAl16}. 

In particular, with an inexact Newton-CG method, the cost of
each Newton step is dominated by conjugate gradient (CG)
iterations. Each CG iteration requires an application of the data
misfit Hessian, which in turn requires a pair of incremental
forward and adjoint PDE solves
\cite{Bui-ThanhBursteddeGhattasEtAl12,FlathWilcoxAkcelikEtAl11,Bui-ThanhGhattasMartinEtAl13,IsaacPetraStadlerEtAl15,AlexanderianPetraStadlerEtAl16}. Thus
the total work estimate is
$\mc{O}(2rk_\mathrm{Newton})$ PDE solves. Here, $k_\mathrm{Newton}$ is the total number of Newton
iterations and $r$ is
the numerical rank of the prior-preconditioned data misfit
Hessian (or equivalently, the dimension of the likelihood-informed
subspace (LIS) of parameter space \cite{CuiMartinMarzoukEtAl14a}). Current state-of-the-art implementations demonstrate that, for a wide
class of inverse problems, the number of outer Newton iterations
$k_\mathrm{Newton}$ and the numerical
rank $r$ are both independent of the mesh-size
\cite{FlathWilcoxAkcelikEtAl11,Bui-ThanhGhattasMartinEtAl13,IsaacPetraStadlerEtAl15}. Mesh-independence
is essential for ensuring scalability of a method to very high
parameter dimensions.  

The challenge is that even though $r$ may be independent of the mesh,
it still depends on the information content of the data. For many
practical large-scale problems with high-dimensional data, $r$ is on the
order of hundreds or thousands (e.g. $r=5000$ in \cite{IsaacPetraStadlerEtAl15a} and $r=1500$ for a linear 3D
convection diffusion problem in
\cite{FlathWilcoxAkcelikEtAl11}). Consequently, even with the best methods and modern supercomputers, solving the inverse problem is
still computationally expensive.

Recall that for a given inverse problem, $r$ is a fixed constant
intrinsic to the misfit function, as it is the numerical rank of the
prior-preconditioned Hessian of the misfit. A
Newton-type method requires $2r$ PDE solves (i.e. $r$ inner
iterations) at each outer iteration to sufficiently capture the $r$
dominant modes of the misfit Hessian. Arbitrarily taking a much
smaller number of inner iterations than $r$ would result in more
Newton iterations and degradation of the overall
convergence. This constraint necessitates the use of a {\em
  surrogate misfit function}, with a Hessian that has numerical rank
smaller than $r$, in order to bypass the impact of $r$ on the overall
cost of solving the inverse problem.

Ideally, this surrogate would leverage a small loss in the ``level
of parameter information in data'' to obtain a large
reduction in the overall computational cost of computing the inverse solution. In
fact, this is what the randomized misfit approach can offer. The RMA
cost is a surrogate cost that reduces the factor of $r$
in the work estimate to an $n \ll r$, while providing a guarantee of
solution viability. Note that the reduced misfit vector
dimension $n$ is a hard upper bound on the numerical rank of the misfit Hessian
for the RMA cost $\Jn$ \eqnref{randprob}. This is numerically
demonstrated for an elliptic inverse problem in Section \secref{scalability}.
 
Using the theory in Section \secref{validity}, we can explicitly quantify
the substantial gains in computational efficiency that are achieved
with a specified accuracy level and a specified {\em confidence}
level. This occurs by reframing the deterministic solution as one that
holds with a given high probability. 

The overall work estimate for the randomized misfit approach therefore is
$\mc{O}(2nk_\mathrm{Newton})$. This cost reduction analysis
is markedly different from the analysis in the stochastic simultaneous source methods described in Section \secref{existingwork}. By combining a large
number of input sources $s$
into a smaller number $\tilde{s}$, stochastic methods for multiple sources reduce the original problem from
$\mc{O}(2rk_\mathrm{Newton}s)$ to
$\mc{O}(2rk_\mathrm{Newton}\tilde{s})$ where $\tilde{s} \ll s$. Note
that the RMA can provide a reduced work estimate in the most general
class of inverse problems where $s=1$
{\em and} a guarantee of solution viability, whereas
randomized simultaneous source methods cannot.

\section{Numerical experiments}
\seclab{numresults}

In this section we demonstrate the randomized misfit approach with different distributions for $\rb$ in
\eqnref{randprob}. We also verify that the convergence is indeed
$\mc{O}(1/\sqrt{n})$ as guaranteed by Theorem \theoref{JLlemma}. Lastly we
verify Theorem \theoref{morthm}, the statistical Morozov's discrepancy principle.

The distributions that we test with the randomized misfit approach are:
\begin{itemize}
\item Gaussian
\item Rademacher
\item Achlioptas
\item $95\%$-sparse ($s$-sparse \eqnref{Li} with $s=20$)
\item $99\%$-sparse ($s$-sparse \eqnref{Li} with $s=100$)
\item Uniform $\mc{U}\LRs{-\sqrt{12}/2,\sqrt{12}/2}$
\end{itemize}

There are many other
distributions suitable for RMA in the literature on
Johnson-Lindenstrauss transforms that we do not consider, particularly
the Subsampled Randomized Hadamard Transform of \cite{AilonChazelle09,Tropp10}
and its subsequent fast and sparse variants. These will be tested in
future work. 

We remark that subsampling (random subset) matrices are not proper
random projection matrices and thus are not suitable for use in the RMA \eqnref{randprob}. Random source encoding methods often test subsampling matrices to
reduce the dimension of the misfit
\cite{DoelAscher12,Roosta-KhorasaniDoelAscher14,Roosta-KhorasaniDoelAscher14}. For
many inverse problems with identifiable structure in the data (e.g. 3-D
hydraulic tomography \cite{CardiffBarrashKitanidis13}), subsampling can be
extremely effective for reducing the computational burden of large observational
datasets. However, in the RMA,
subsampling down to misfit dimension $n$ is equivalent to choosing
$\rb_j$ from the canonical set $\LRc{\eb_1,\ldots,\eb_N}$ {\em
  without} replacement. Therefore the set $\LRc{\rb_j}_{j =
  1}^n$ is not an i.i.d set. Similar to
\cite{DoelAscher12,Roosta-KhorasaniVanAscher14,Roosta-KhorasaniDoelAscher14},
our numerical results (omitted here) are poorer with subsampling matrices compared to results
with proper random projections. This is consistent with the idea discussed in Section \secref{intro} that
random projections are {\em geometry preserving transformations}, and
can preserve the geometric relationship between any large observational
data set and the parameter-to-observable map. Random
subset matrices do not possess this property in general.

For our model problem we consider the estimation of a distributed coefficient in an
elliptic partial differential equation. This Poisson-type problem arises in
various inverse applications, such as the heat conductivity or
groundwater problem, or in finding a membrane with a given
spatially-varying stiffness. 

For concreteness we consider the heat conduction problem on an open
bounded domain $\Omega$, governed
by  
\begin{eqnarray}
-\Div\LRp{e^\ub\Grad \w} &= 0 & \mathrm{ in } \, \Omega \nonumber\\
-e^\ub\Grad \w \cdot \mb{n} &= Bi \, \w \qquad &\mathrm{ on } \, \partial \Omega \setminus \Gamma_{R}, \eqnlab{pde}\\
 -e^\ub\Grad \w \cdot \mb{n} &= -1 &\mathrm{ on } \, \Gamma_{R}, \nonumber
 \end{eqnarray}
where $\ub$ is the logarithm of distributed thermal conductivity, $\w$
is the distributed forward state (temperature), $\mb{n}$ is the unit outward
normal on $\pOmega$, and $Bi$ is the Biot number. Here, $\Gamma_R$ is a portion of the boundary $\pOmega$ on which the inflow heat flux is $1$. The rest of the boundary is assumed to have Robin boundary condition.
We are interested in reconstructing the distributed log conductivity $\ub$,
given noisy measurements of temperature $w$ observed on $\Omega$. 

The standard $H^1\LRp{\Omega}$ finite element method is used to
discretize the misfit and the
regularization operator. The synthetic truths that we seek to recover
are a 1-D sinusoidal curve, a 2-D Gaussian on a thermal fin, and a
cube with nonzero log conductivity values on a sphere
in the center and semispheres in the opposing corners. Figure
\figref{Fullmap} shows representations of $\ub_\mathrm{truth}$ on a
mesh for these cases.

The synthetic noisy temperature observations are then generated at
all mesh points through the forward
model \eqnref{pde}. The misfit vector generated from
\figref{truth1} has data dimension $N =1025$ (with 1\% percent added noise), from
\figref{truth2} has data dimension $N = 1333$ (with .1\% percent added noise), and from
\figref{truth3}  has data dimension $N = 2474$ (with .2\% percent added noise), respectively.  


\begin{figure}[h!t!b!]
  \subfigure[truth $\ub$ for 1D experiment]{
    \includegraphics[trim=0.5cm 6.5cm 2cm 7.0cm,clip=true,width=0.47\columnwidth]{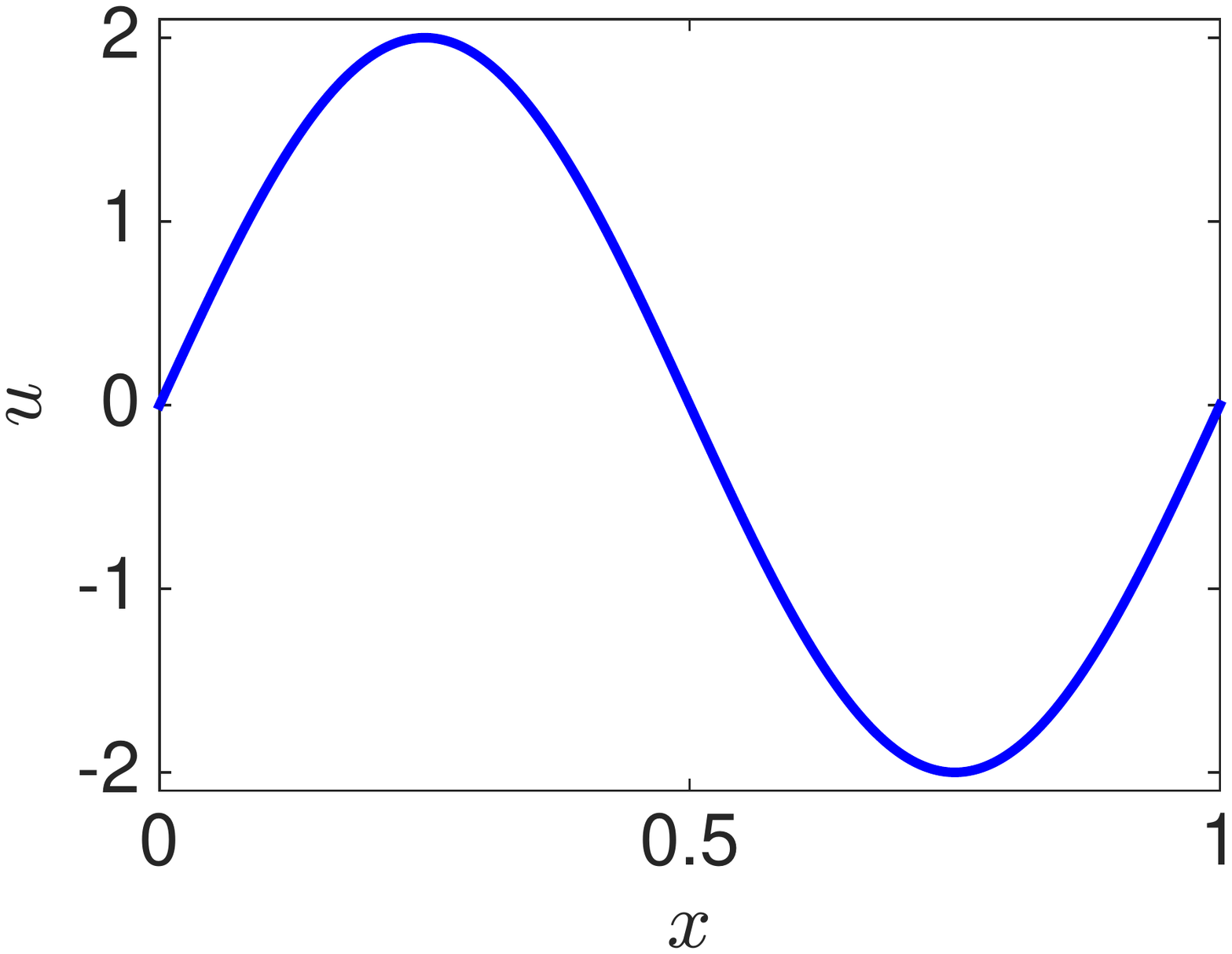}
    \figlab{truth1}
  }
\subfigure[truth $\ub$ for 2D experiment]{
    \includegraphics[trim=1.3cm 6.8cm 1.3cm 7.3cm,clip=true,width=0.47\columnwidth]{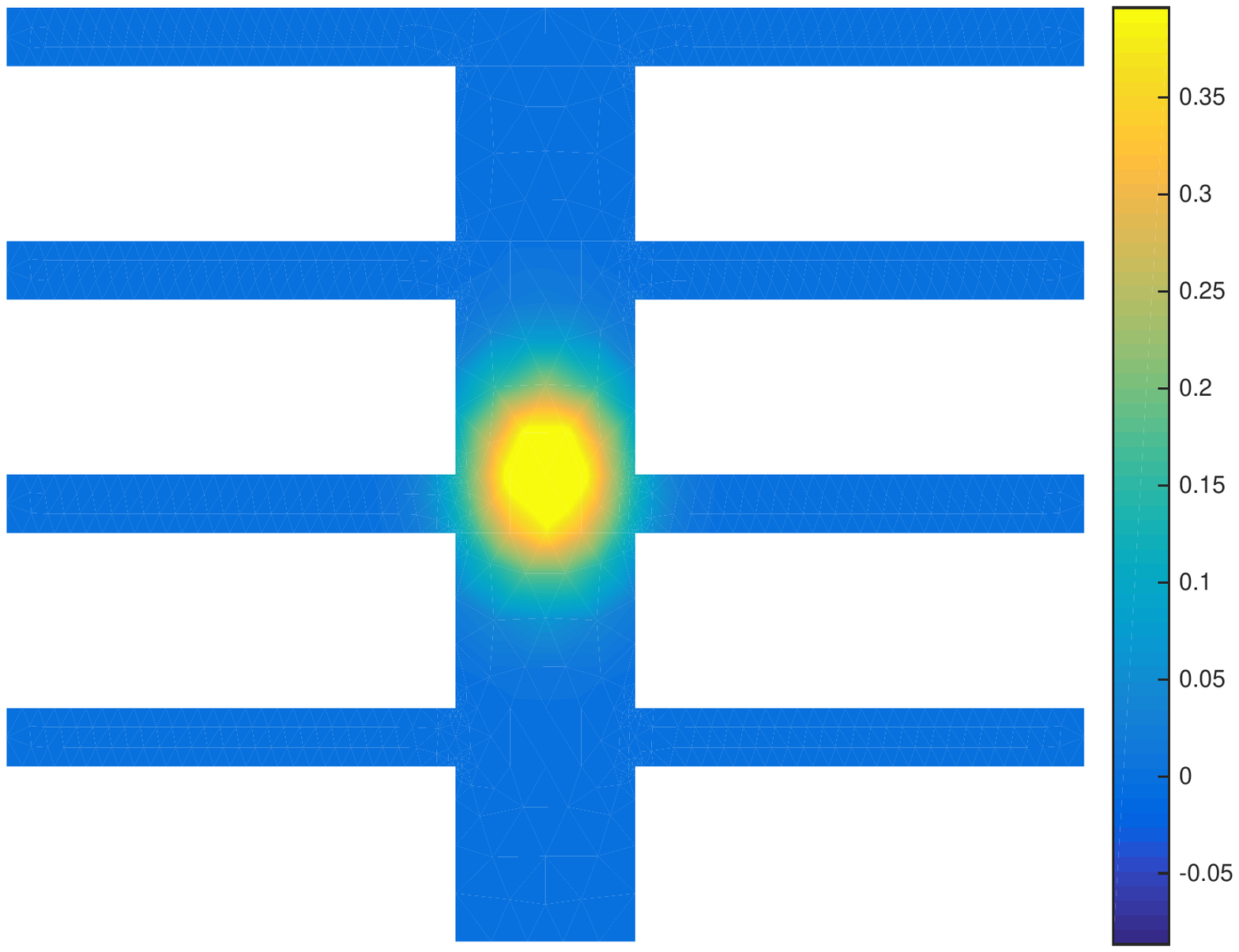}
    \figlab{truth2}
  }
\centering
\subfigure[truth $\ub$ for 3D experiment]{
    \includegraphics[trim=2.4cm 2.6cm 1.3cm 6.5cm,clip=true,
    width=0.49\columnwidth]{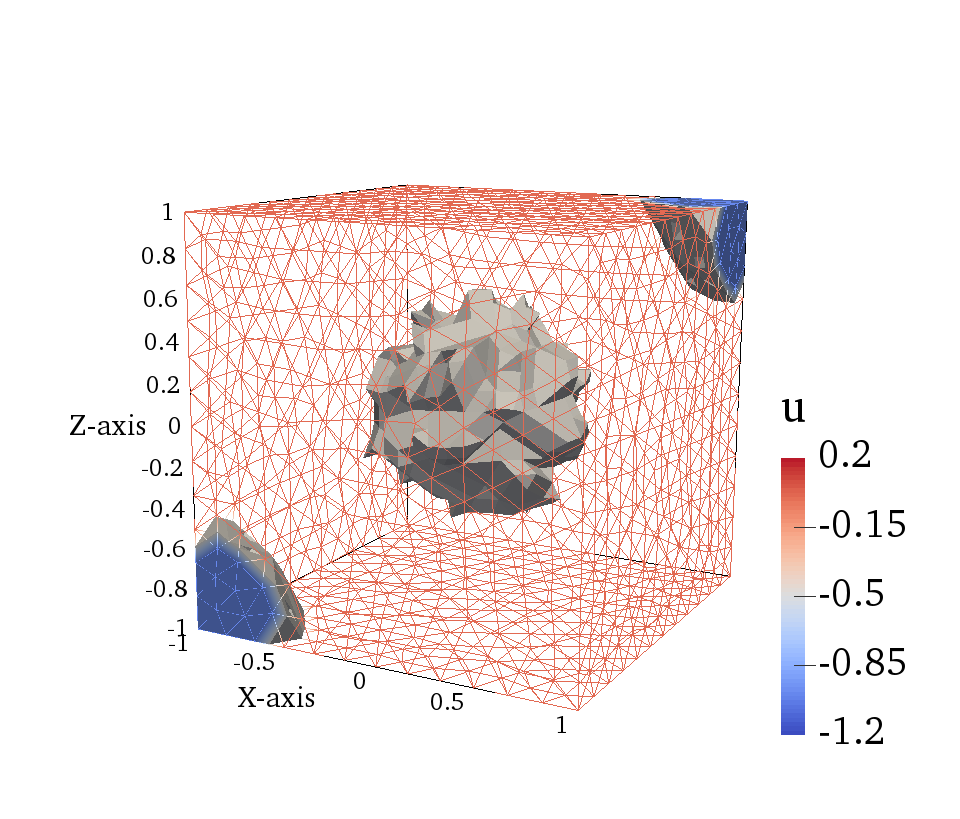}
    \figlab{truth3}
  }
  \caption{The distributed truth log conductivity parameters used in the
    experiments. The parameter fields are used to
    obtain noise-corrupted temperature data through the
    forward model \eqnref{pde}.}
  \figlab{Fullmap}
\end{figure}

For the inversion results we use an implementation of the trust region inexact
Newton conjugate gradient method, for which some of the main ideas can
be found in \cite{ColemanLi96,
  BranchColemanLi99,
  Bui-Thanh07,NocedalWright06}. Unless otherwise noted, the
stopping criteria is when the Newton step size, cost function value,
or norm of the gradient falls below $10^{-6}$. 

\subsection{Convergence results}

We first compare plots of the RMA cost $\Jn\LRp{\ub_0}$ to
the original cost $\J\LRp{\ub_0}$ for a fixed distributed parameter $\ub_0$,
using the model heat problem \eqnref{pde}. We choose a random $\ub_0$ from
the prior distribution and construct the RMA cost $\Jn\LRp{\ub_0}$
with the various random projections listed above. Since $\ub_0$ lives
in high-dimensional space $\R^m$, where $m$ is the number of finite
element nodal values, for
the purpose of visualization Figure \figref{RMA3D} shows plots of the
RMA cost $\hat{\J}_n\LRp{\kappa}: = \Jn\LRp{\ub_0 + \kappa \sb}$ in a
direction $\sb:=\Grad\J\LRp{\ub_0}$ for the 3D example. For each of
the random projections tested we observe convergence of
$\hat{J}_n\LRp{\kappa}$ to $\hat{J}\LRp{\kappa}$ as $n$  
increases. More importantly, for all distributions, the minimizer of
$\hat{J}\LRp{\kappa}$ is well-approximated by $\hat{J}_n\LRp{\kappa}$,
even for $n$ small, as shown by Theorem \theoref{morthm}. That is,
although $\J_n$ for $n=\mc{O}(1)$ is far from $\J$, the local
minimizers align. This is consistent with observed fidelity of randomized MAP
    points despite the slow convergence of the randomized cost, and
    similar phenomena seen in related methods.  Plots
with distributions other than Achlioptas and for the 1D and 2D
examples are omitted when results are similar to the 3D Achlioptas experiments (see \verb"http://users.ices.utexas.edu/~ellenle/RMAplots.pdf").
\begin{figure}[h!t!b!]
  \centering
  \includegraphics[trim=0.8cm 6.5cm 2.0cm 6.9cm,clip=true,width=0.58\columnwidth]{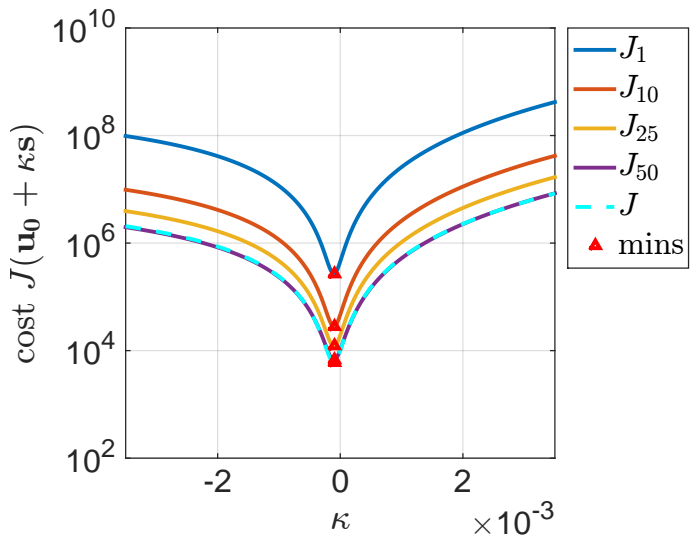}
  \caption{Contours of the RMA cost $\Jn$ with different $n$ versus
    the original cost $\J$, for the 3D example with the Achlioptas distribution. Contours are
    evaluated along a 1D direction $\sb$ parameterized by $\kappa$ and
    centered at a
    random parameter $\ub_0$ in the prior distribution. Red triangles
    indicate the minimum values of each contour. 
  }
  \figlab{RMA3D}
\end{figure}



Theorem \theoref{error} states that $\umap_n$, the minimizer of
$\Jn$,  and the minimum objective function value $\Jn^\star$
converge at the same rate, given by the distortion tolerance $\varepsilon$, but with different
constants. Figure \figref{RMA3D} illustrates how an RMA solution $\hat{\ub}^\star_n$ may converge
quickly to ${\hat\ub}^\star$,  although convergence of the minimum
value $\hat{\Jn}(\hat{\ub_n})$ to $\hat{\J}(\hat{\ub})$ can be slow due to the different constant.  To test this hypothesis at the actual minimizer $\umapn$, we plot
the error of the RMA MAP point $\umapn$ and its
corresponding optimal value $\Jn^\star$ in Figure \figref{error3D} for
the 3D example and the Achlioptas random projection\footnote[2]{Again,
  similar
  results are seen with the 1D and 2D examples and with different
  random projections. They are omitted here.}. Data shown is the
average of five runs. Both the absolute errors $|\Jn^\star-\J^\star|$ and
 $\nor{ \umapn - \umap }$ and normalized errors
 $|\Jn^\star-\J^\star|/ |\J^\star| $ and $\nor{ \umapn -
   \umap } / \nor{\umap}$ are shown, and an  $\mc{O}(1/\sqrt{n})$ reference
 curve is plotted to show the convergence rate is indeed
 $\mc{O}(1/\sqrt{n})$ for both $\umapn$ and $\Jn^\star$. However, the absolute error of
$\umapn$ is orders of magnitude smaller than $\Jn^\star$ for all
considered random projections. Also, the relative error in
$\umapn$ decreases much faster than the relative error in $\Jn^\star$
for $1 < n < 100$. Therefore a convergence analysis of the randomized
cost $\Jn$ alone is not adequate for understanding the method
efficacy in this range; the additional theory in Section
\secref{validity} is required to characterize solution accuracy for
$n$ in the range of interest.

\begin{figure}[h!t!b!]

  \subfigure[absolute convergence]{
    \includegraphics[trim=1.1cm 6.6cm 2.0cm 7.0cm,clip=true,width=0.47\columnwidth]{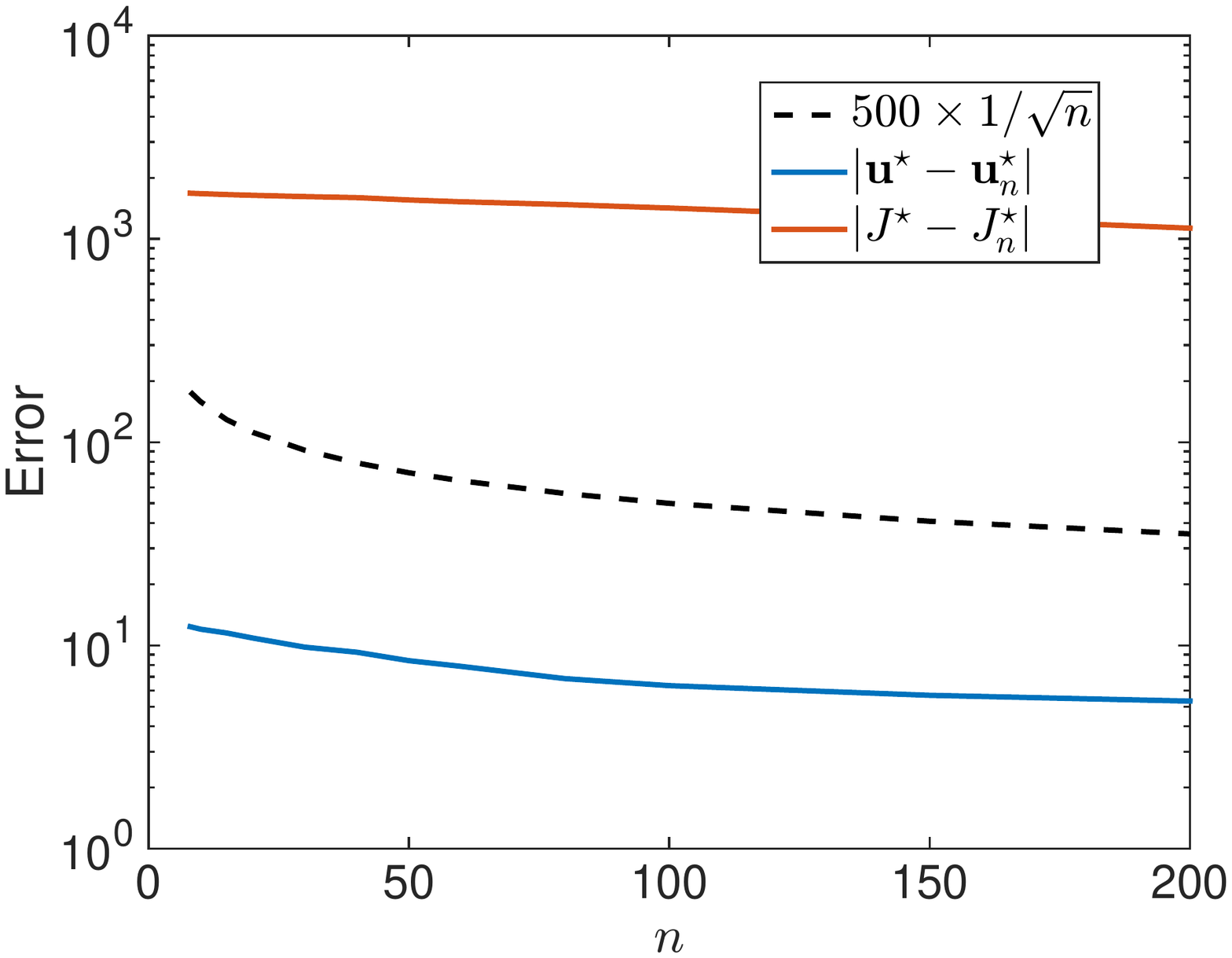}
    \figlab{absolute}
  }   
  \subfigure[relative convergence]{
    \includegraphics[trim=1.1cm 6.6cm 2.0cm 7.0cm,clip=true,width=0.47\columnwidth]{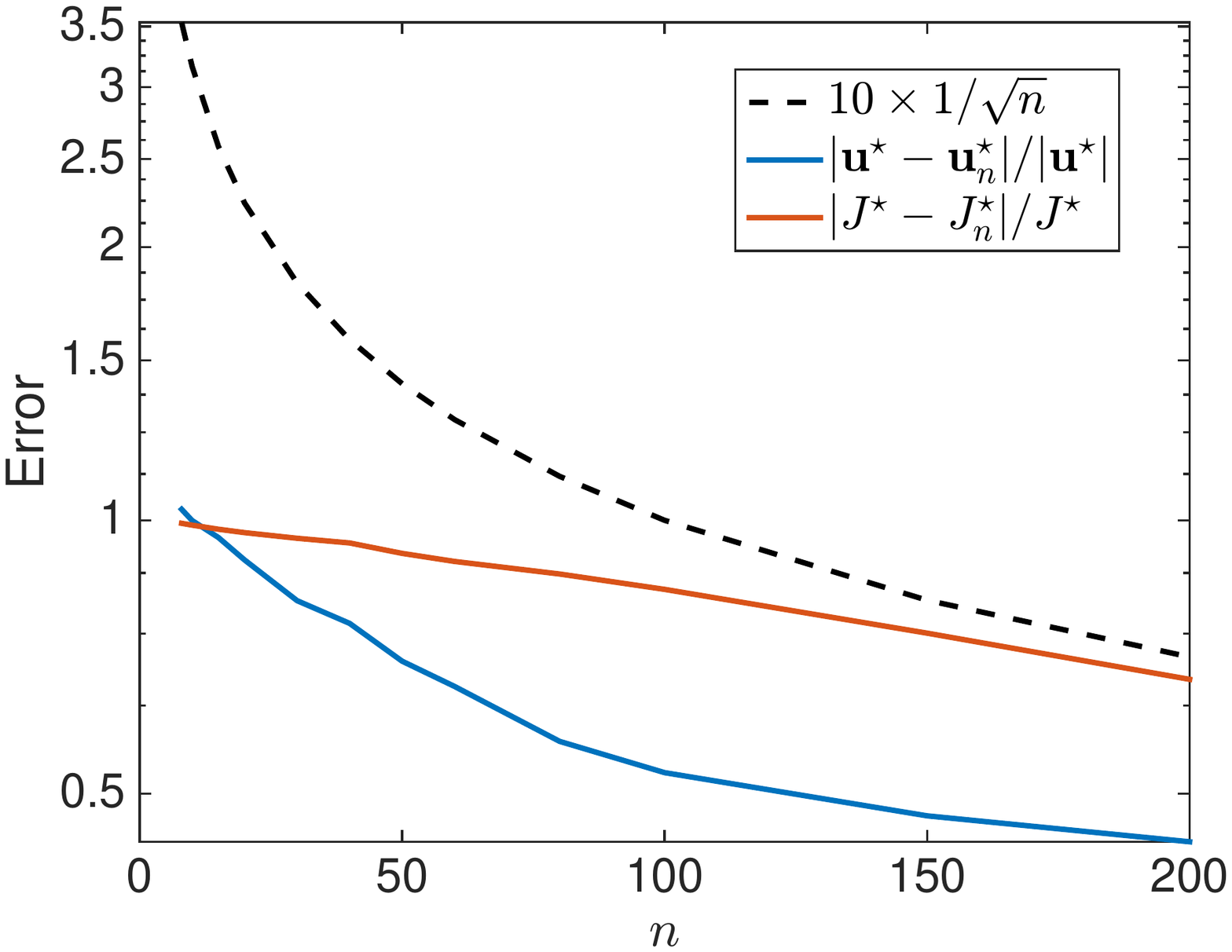}
    \figlab{relative}
  }
\caption{Log-linear plots of absolute errors $|\Jn^\star-\J^\star|$ and
 $\nor{ \umapn - \umap }$, relative errors
 $|\Jn^\star-\J^\star|/ |\J^\star| $ and $\nor{ \umapn -
   \umap } / \nor{\umap}$, and $\mc{O}(1/\sqrt{n})$ reference curves show the  $\mc{O}(1/\sqrt{n})$ convergence rate for both $\umapn$ and $\Jn^\star$ as given by
  Theorem \theoref{error}.}
  \figlab{error3D}
\end{figure} 



Inversion results from minimizing the RMA cost with different $n$ in the 1D, 2D, and 3D example are shown alongside the true MAP
estimate $\umap$ in Figures \figref{1Dinversion}, 
\figref{2Dinversion} and \figref{3Dinversionparaview}. The figures shown
are results with  $\rb$ distributed by the Achlioptas
distribution ($66\%$ sparse). We see that the original MAP point $\umap$ is
well-approximated by the RMA solution $\umapn$ in all cases with $50 \leq n \leq 100$. 

\begin{figure}[h!t!b!]

  \subfigure[$n = 5$]{
    \includegraphics[trim=2.0cm 6.7cm 2.5cm 7.1cm,clip=true,width=0.3\columnwidth]{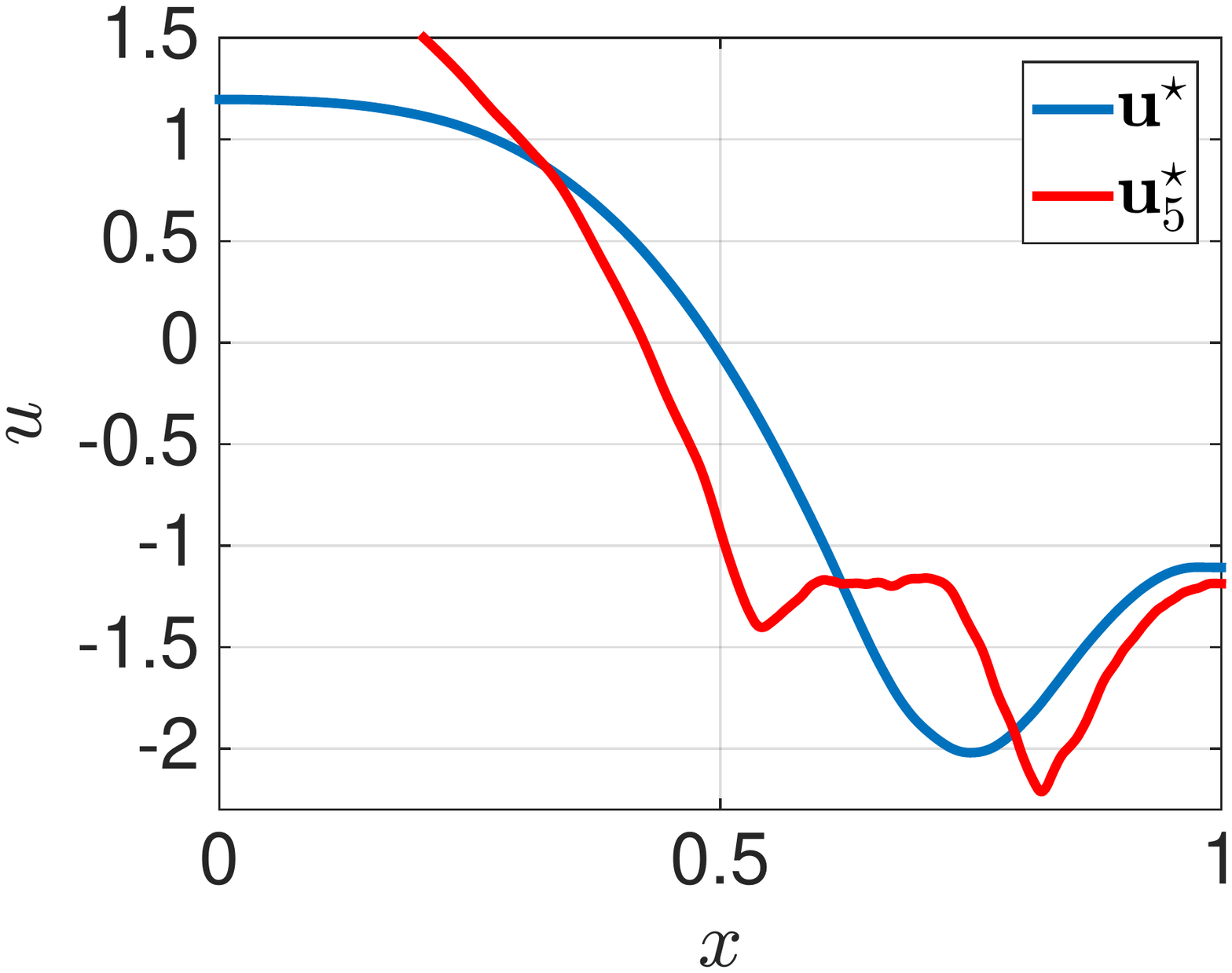}
    \figlab{Achli1D5}
  }   
  \subfigure[$n = 10$]{
    \includegraphics[trim=2.0cm 6.5cm 2.5cm 7.1cm,clip=true,width=0.3\columnwidth]{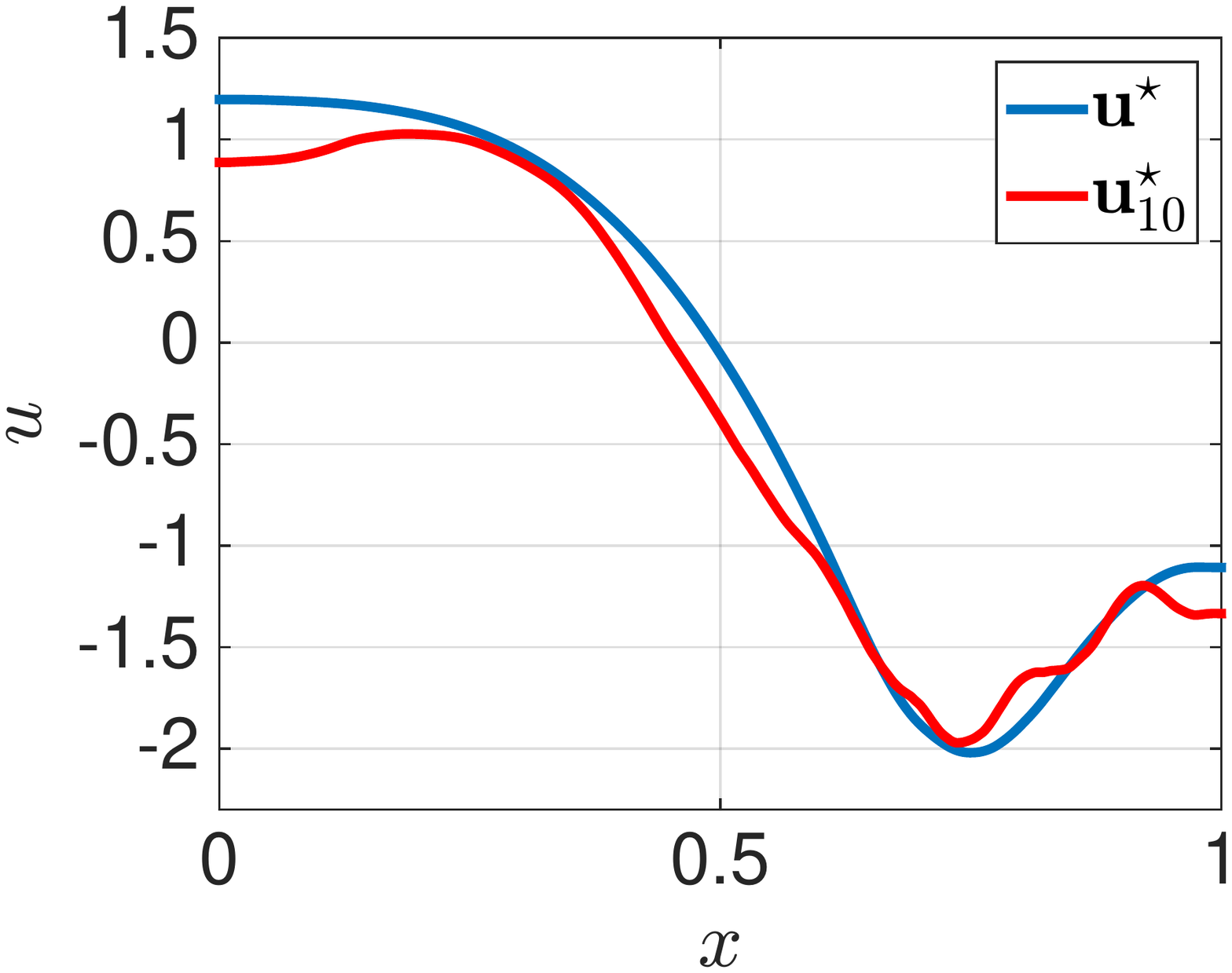}
    \figlab{Achli1D10}
  }
  \subfigure[$n = 50$]{
    \includegraphics[trim=2.0cm 6.5cm 2.5cm 7.1cm,clip=true,width=0.3\columnwidth]{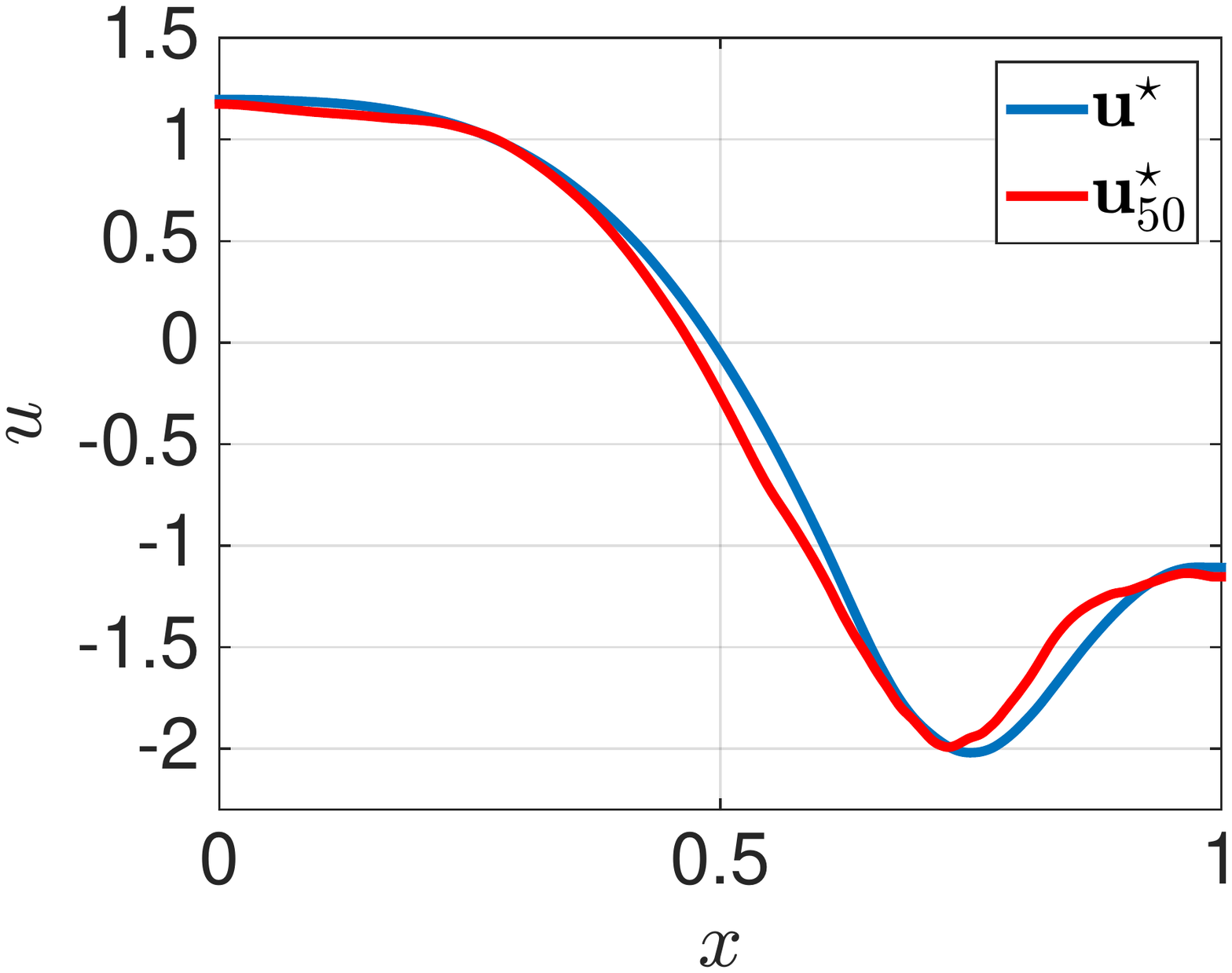}
    \figlab{Achli1D50}
  }
  \subfigure[$n = 100$]{
    \includegraphics[trim=2.0cm 6.5cm 2.5cm 7.1cm,clip=true,width=0.31\columnwidth]{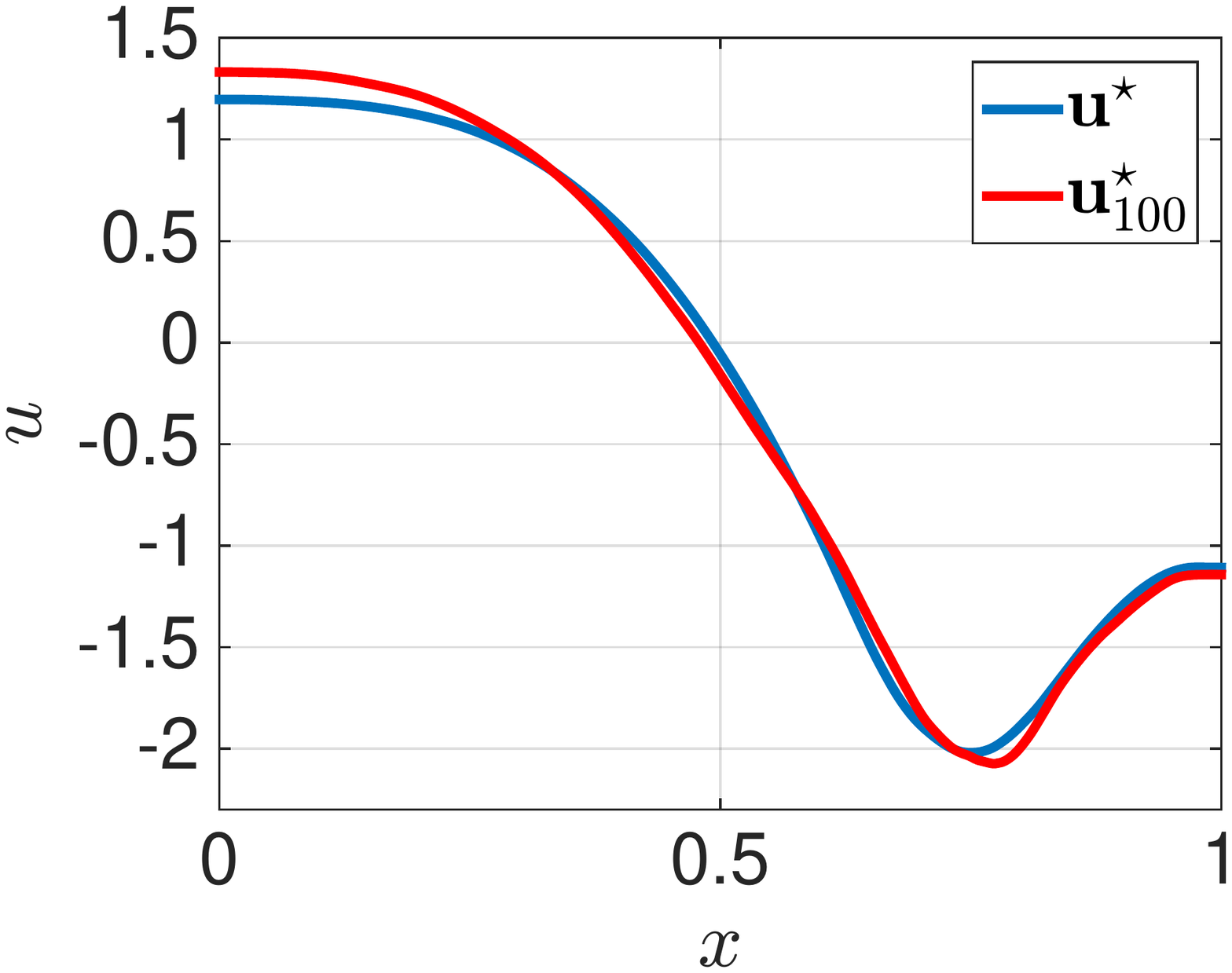}
    \figlab{Achli1D100}
  }
  \subfigure[$n = 300$]{
    \includegraphics[trim=2.0cm 6.5cm 2.5cm 7.1cm,clip=true,width=0.31\columnwidth]{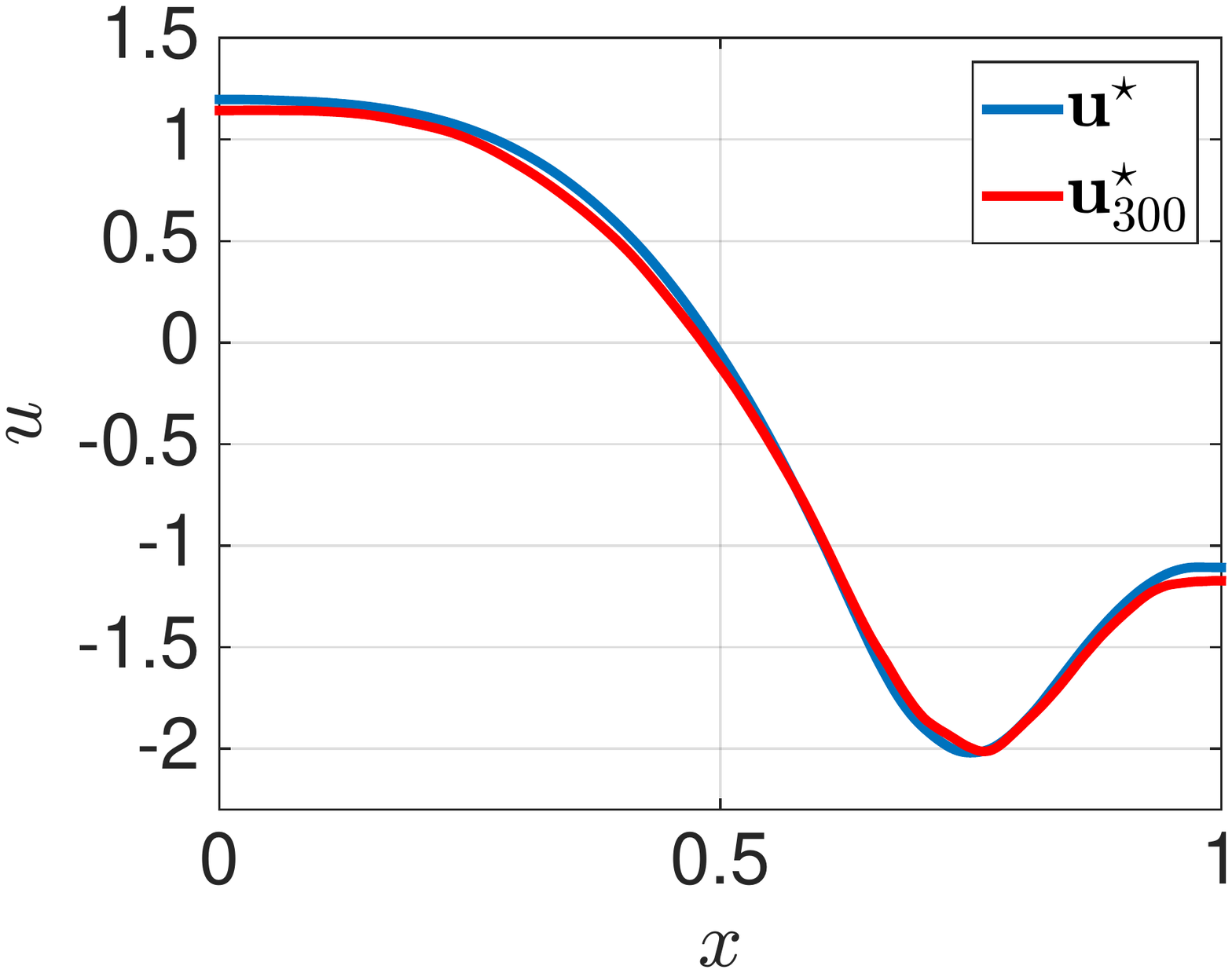}
    \figlab{Achli1D300}
  }
  \subfigure[$n = 600$]{
    \includegraphics[trim=2.0cm 6.5cm 2.5cm 7.1cm,clip=true,width=0.31\columnwidth]{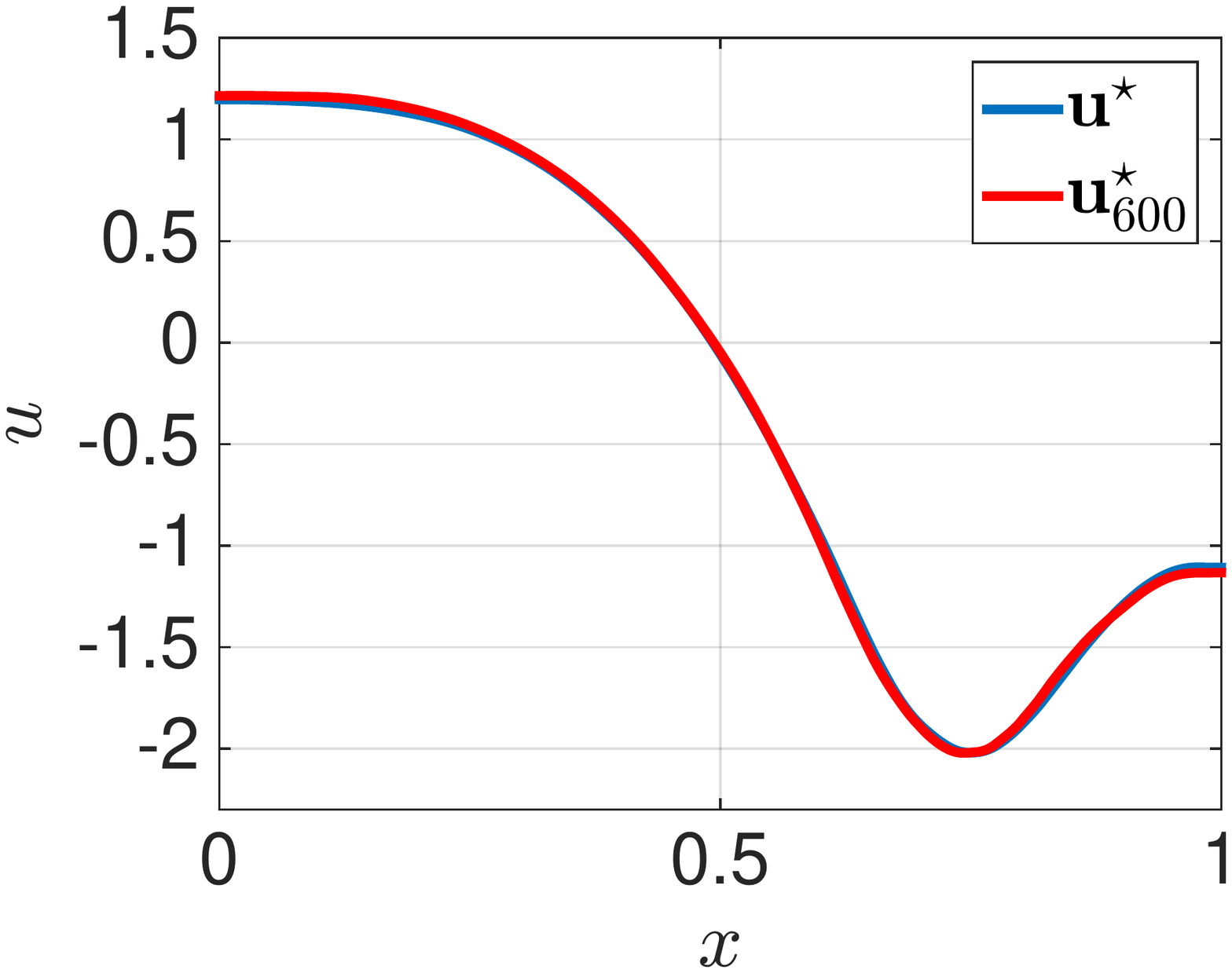}
    \figlab{Achli1D1}
  }
  \caption{1D elliptic PDE example: Convergence of
    $\umapn$ to $\umap$ as $n$ increases. The Achlioptas
    random projection ($66\%$ sparse) is used for $\Sigb$ and the
    original data dimension is $N=1025$.} 
  \figlab{1Dinversion}
\end{figure} 

\begin{figure}[h!t!b!]
  \subfigure[$n=30$]{  
    \includegraphics[trim=1.3cm 6.8cm 1.3cm 7.3cm,clip=true,width=0.48\columnwidth]{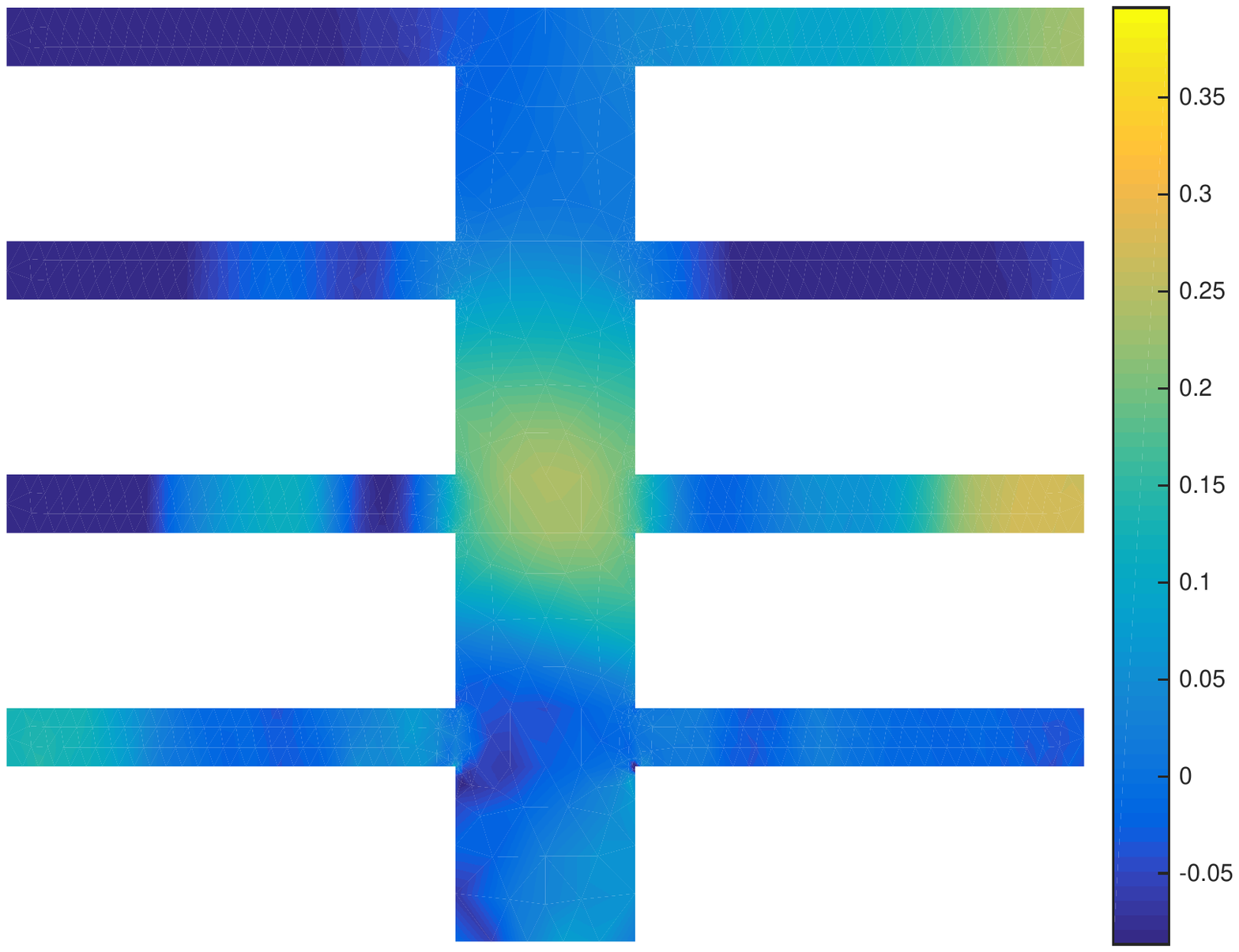}
  }
  \subfigure[$n=50$]{  
    \includegraphics[trim=1.3cm 6.8cm 1.3cm 7.3cm,clip=true,width=0.48\columnwidth]{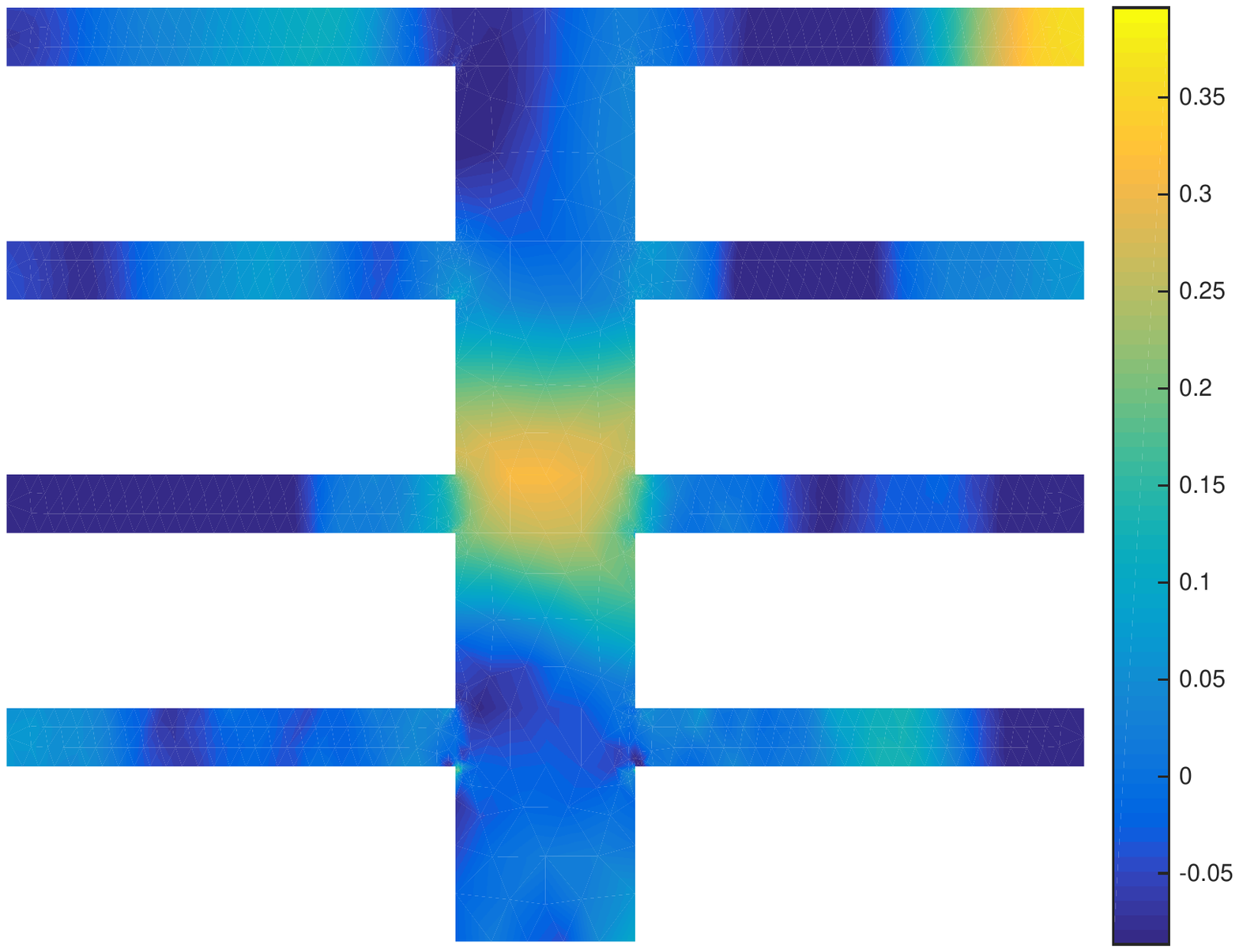}
  }
  \subfigure[$n=100$]{  
    \includegraphics[trim=1.3cm 6.8cm 1.3cm 7.3cm,clip=true,width=0.48\columnwidth]{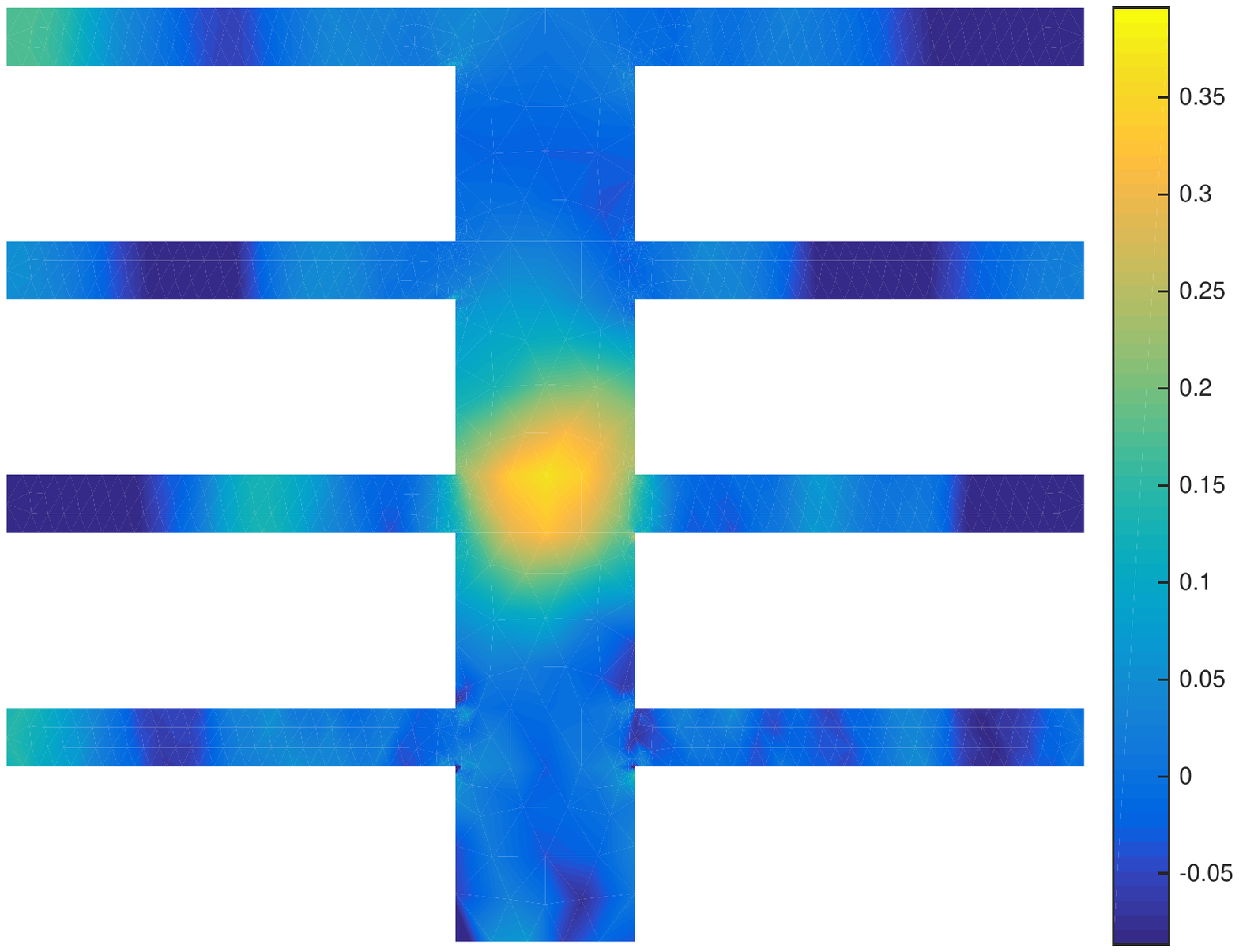}
  }
  \subfigure[$\umap$]{  
    \includegraphics[trim=1.3cm 6.8cm 1.3cm 7.3cm,clip=true,width=0.48\columnwidth]{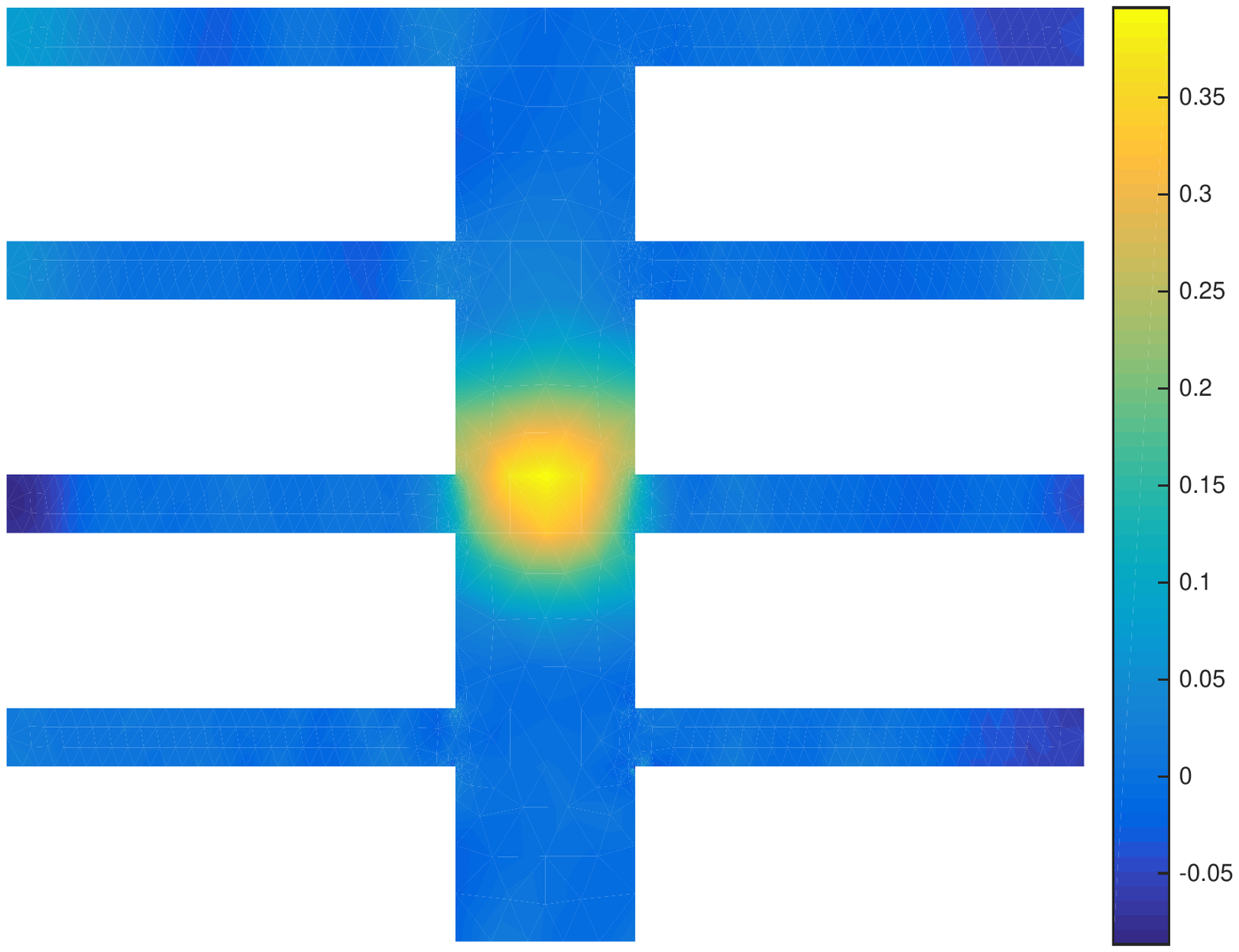}
  }

  \caption{2D elliptic PDE example: Convergence of
    $\umapn$ to $\umap$ as $n$ increases. The Achlioptas
    random projection ($66\%$ sparse) is used for $\Sigb$ and the
    original data dimension is $N=1333$.} 
  \figlab{2Dinversion}
\end{figure}

\begin{figure}[h!t!b!]
  \subfigure[$n=50$]{
    \includegraphics[trim=2.4cm 2.6cm 1.3cm 6.5cm,clip=true,width=0.49\columnwidth]{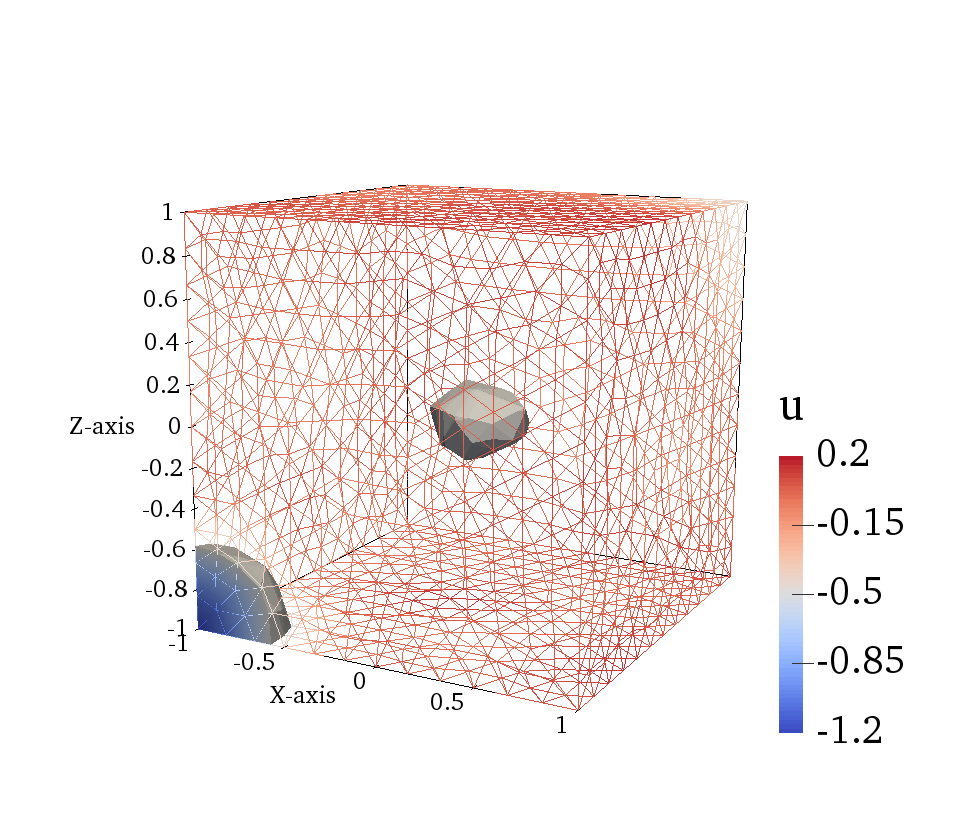}
    \figlab{Elliptic3Dachlioptas50}
  }
  \subfigure[$n=100$]{
    \includegraphics[trim=2.4cm 2.6cm 1.3cm 6.5cm,clip=true, width=0.49\columnwidth]{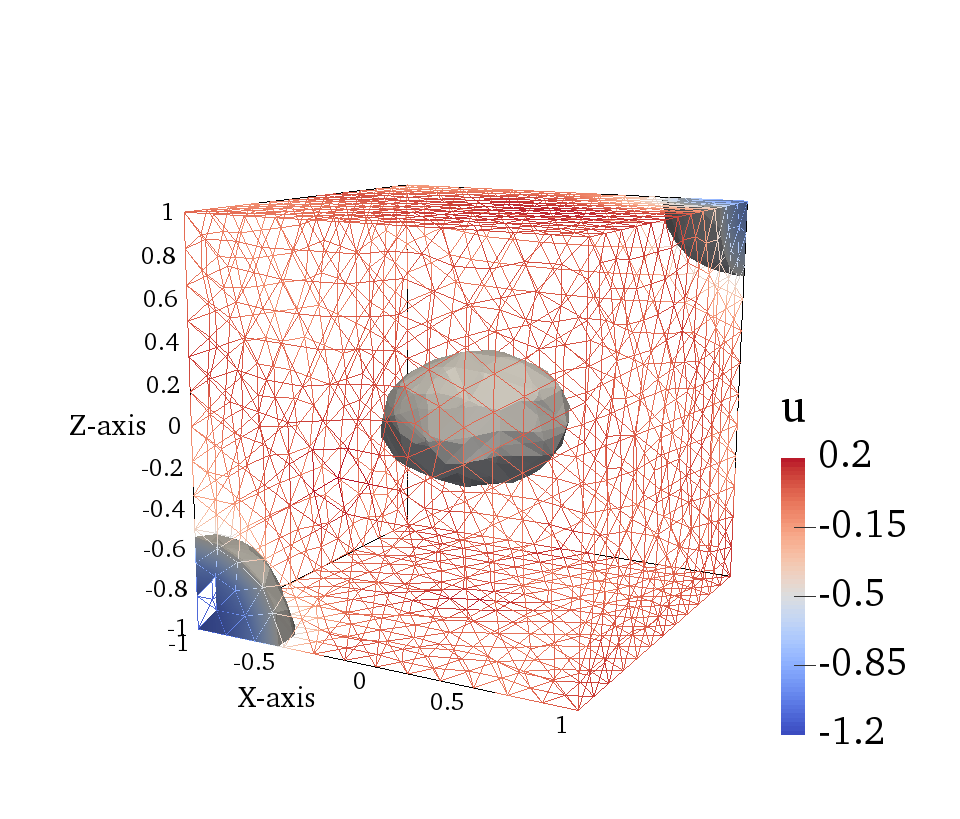}
    \figlab{Elliptic3Dachlioptas100}  
  }
  \subfigure[$n=150$]{
    \includegraphics[trim=2.4cm 2.6cm 1.3cm 6.5cm,clip=true, width=0.49\columnwidth]{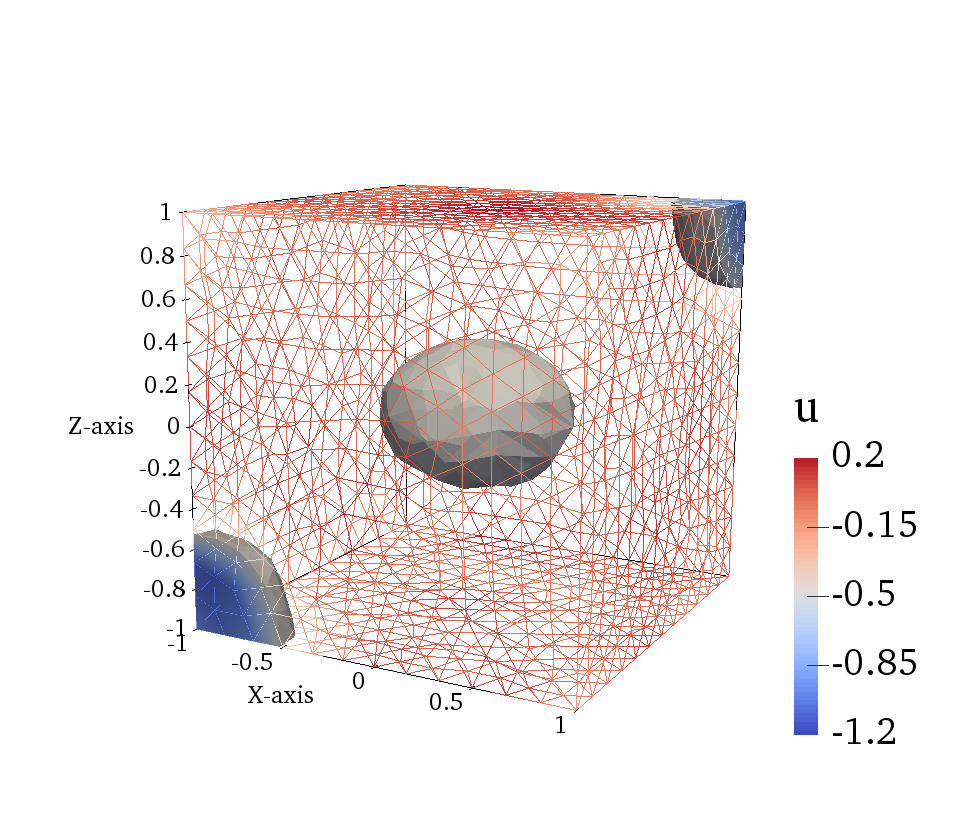}
    \figlab{Elliptic3Dachlioptas150}
  }
  \subfigure[$\umap$]{
    \includegraphics[trim=2.4cm 2.6cm 1.3cm 6.5cm,clip=true, width=0.49\columnwidth]{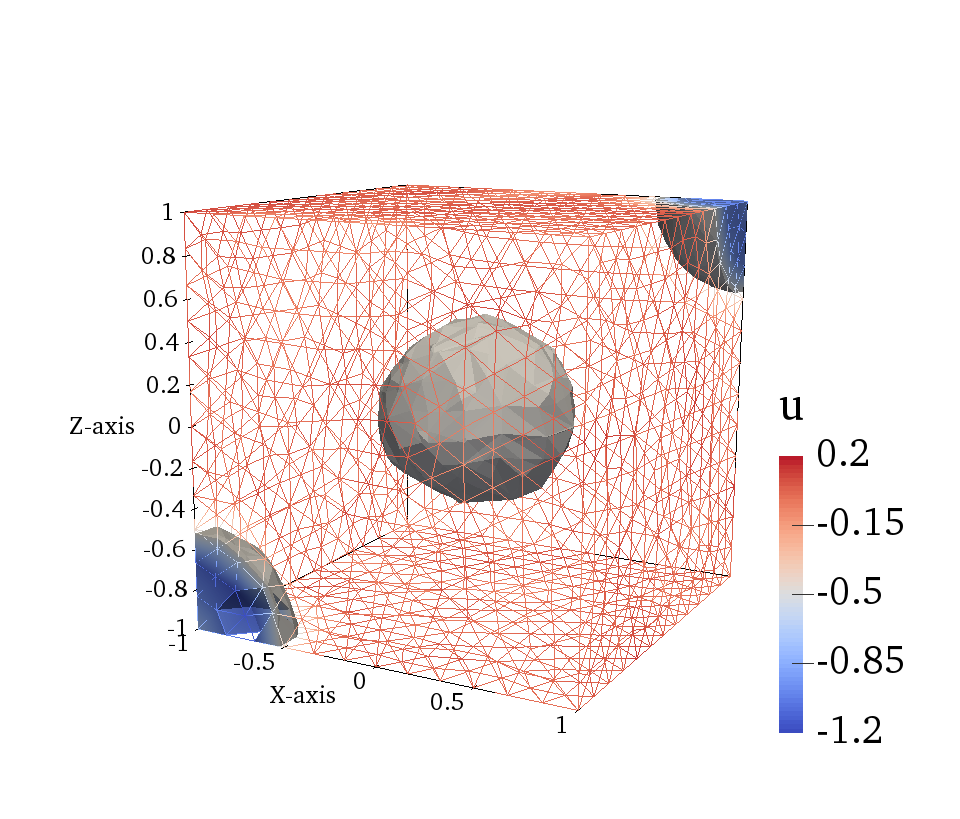} 
    \figlab{Elliptic3DFullMap}
  }
  \caption{3D elliptic PDE example: Convergence of
    $\umapn$ to $\umap$ as $n$ increases. The Achlioptas
    random projection ($66\%$ sparse) is used for $\Sigb$ and the
    original data dimension is $N=2474$.} 
    \figlab{3Dinversionparaview}
\end{figure}

In a different experiment, we consider a 3D example in which only surface observations are
available. The parameters are the same as the problem represented by Figure \figref{truth3}
but the data are now obtained from $901$ observations on the
surface of the cube (except the bottom surface), and the truth log
conductivity is nonzero within the sphere of radius $0.5$ centered at
the origin as seen in Figure \figref{3DSyntheticSurface}.
Figure \figref{3DgamMapSurface} shows the original MAP
estimate $\umap$. 
Compared to the above example the recovery is poorer, but this is
expected due to having less observational data. 
Our interest however is in reducing the computational burden caused by the large data dimension while
recovering a reasonable MAP estimation.
Subsequently, we compare the RMA MAP point $\umapn$ to
the true MAP point $\umap$ (a minimizer of $\J$).
The results in Figure \figref{3DinversionparaviewsurfaceN} show the
RMA solutions $\umapn$ as $n$ increases.
As can be seen, with $n=150$, i.e. a 6-fold reduction in the data
misfit dimension, the RMA approximation
$\umap_{150}$ is still a good approximation to the original MAP solution $\umap$.

\begin{figure}[h!t!b!]
\centering
    \includegraphics[trim=2.4cm 2.6cm 1.3cm 6.5cm,clip=true, 
    width=0.45\columnwidth]{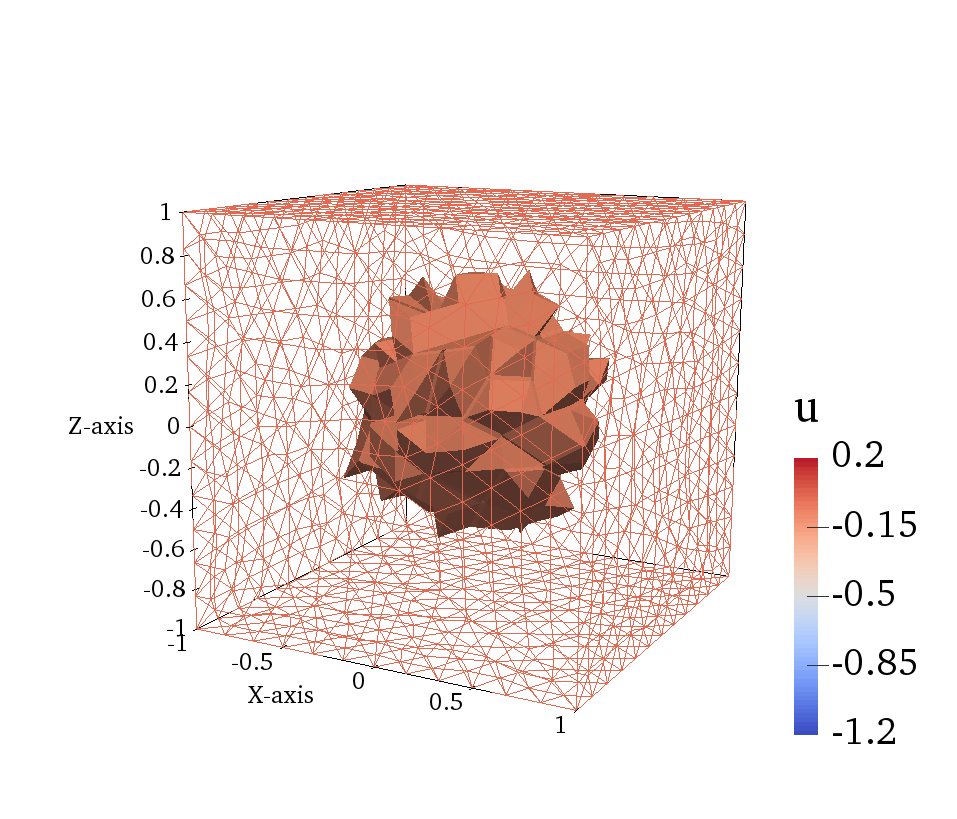}
    \caption{Truth $\ub$ for 3D experiment with surface observations: The same number of mesh
      elements as in Figure \figref{truth3} is used but now the
      synthetic parameter is a single sphere, and observational data is
      obtained from $N=901$ mesh points on the top and side surfaces of
      the cube.}
    \figlab{3DSyntheticSurface}   
  \end{figure}

  \begin{figure}[h!t!b!]
    \subfigure[$n=10$]{
      \includegraphics[trim=2.4cm 2.6cm 1.3cm 6.5cm,clip=true,
      width=0.49\columnwidth]{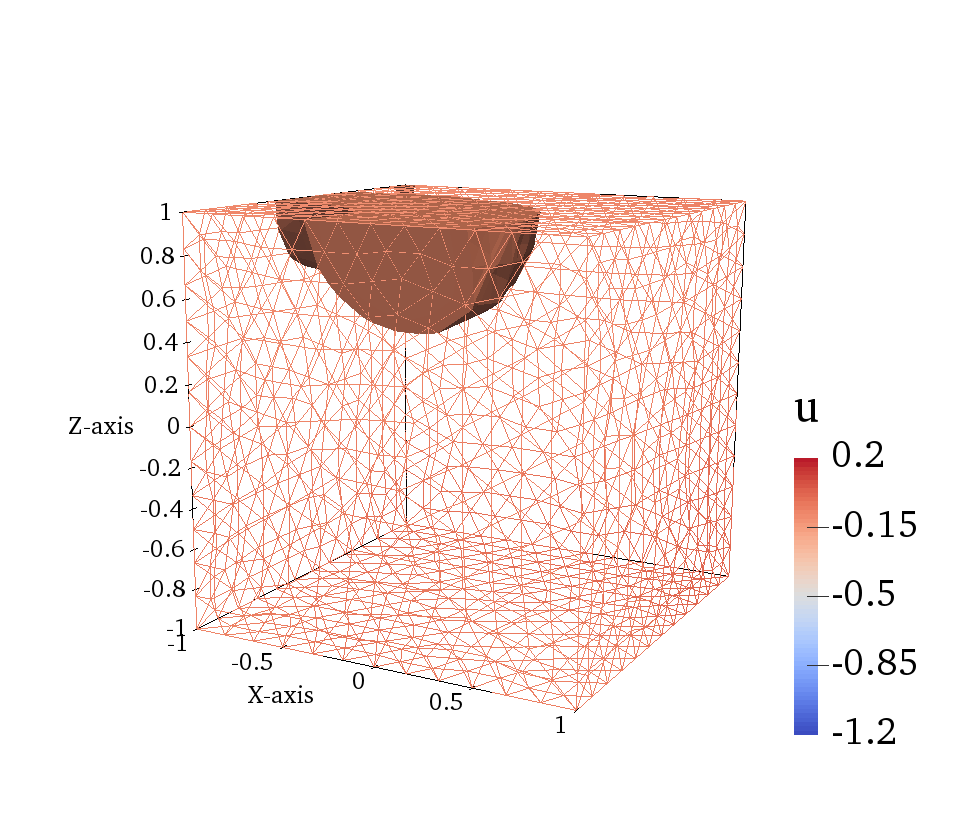}
    }
    \subfigure[$n=50$]{
      \includegraphics[trim=2.4cm 2.6cm 1.3cm 6.5cm
      ,clip=true, width=0.49\columnwidth]{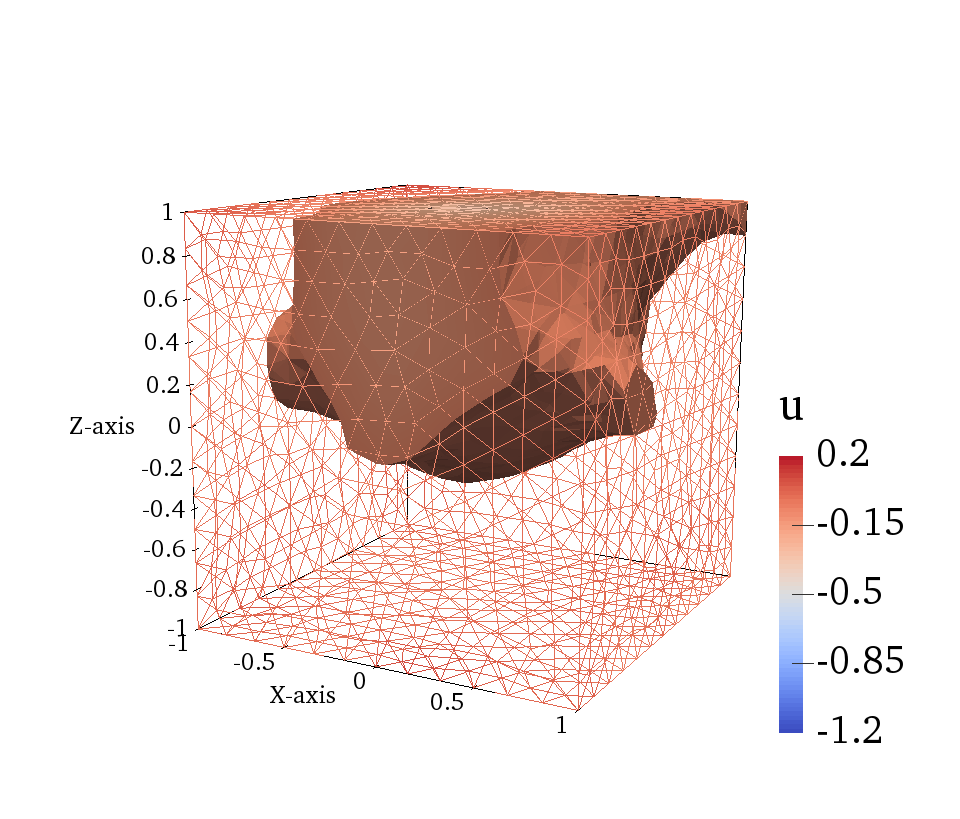}
    }
    \subfigure[$n=150$]{
      \includegraphics[trim=2.4cm 2.6cm 1.3cm 6.5cm
      ,clip=true, width=0.49\columnwidth]{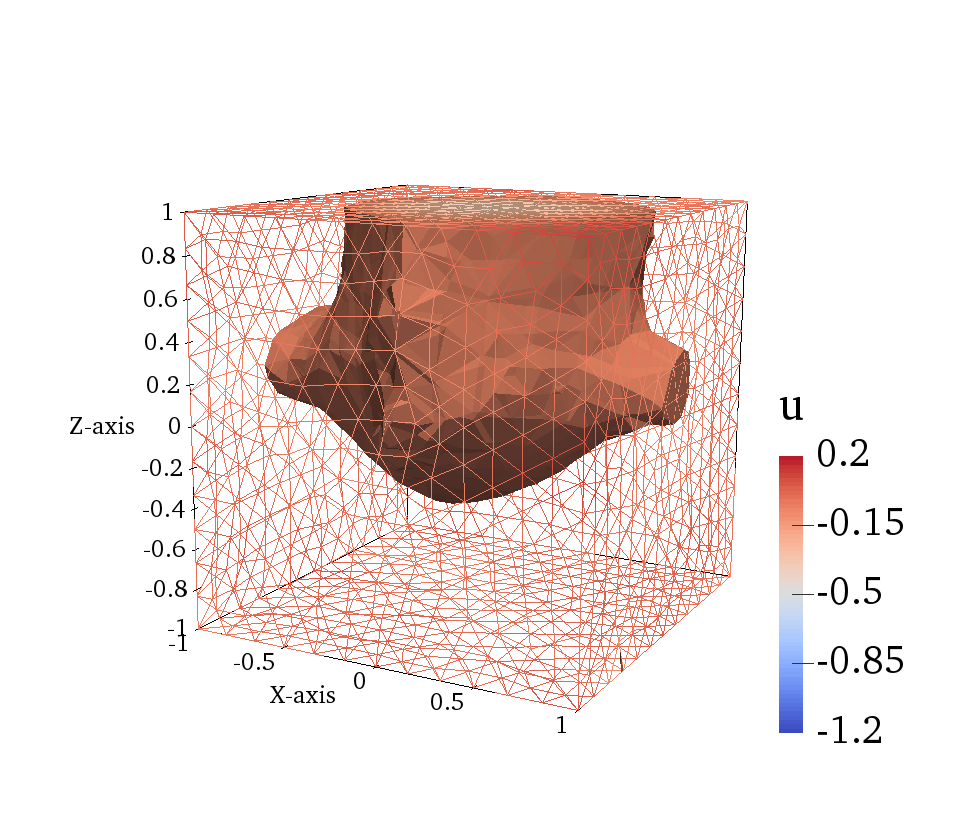}
    }
    \subfigure[$\ub^\star$ (solution of the full cost)]{
      \includegraphics[trim=2.4cm 2.6cm 1.3cm 6.5cm
      ,clip=true, width=0.49\columnwidth]{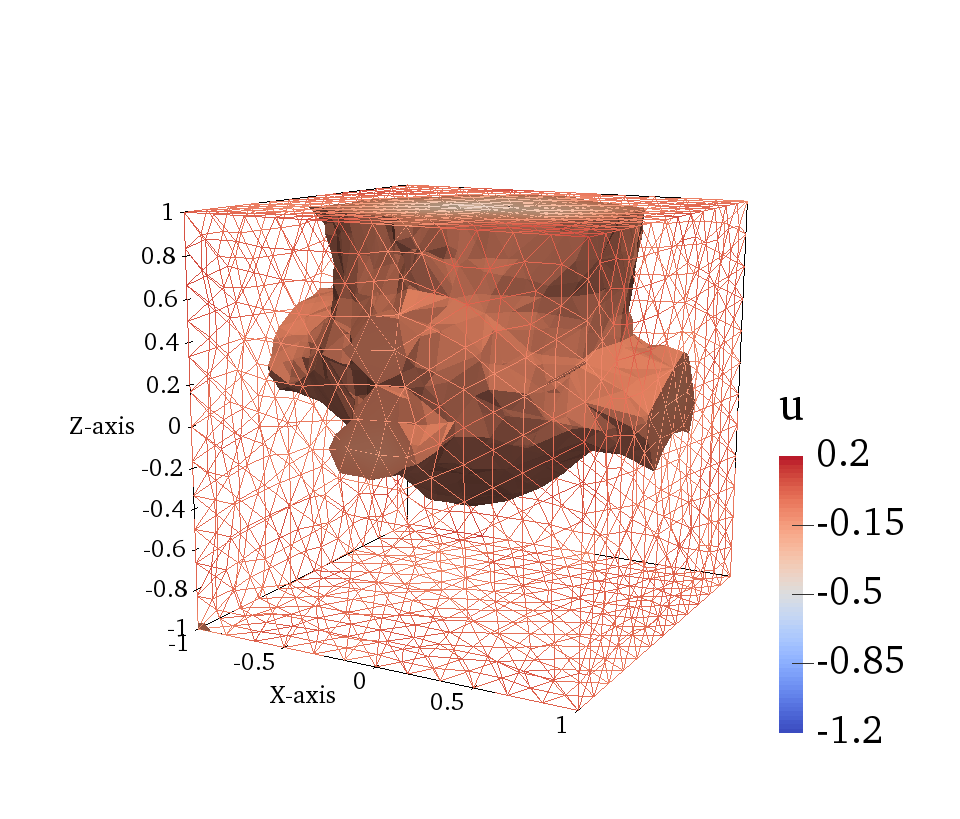}
      \figlab{3DgamMapSurface}
    }
    \caption{3D elliptic PDE example with surface observations: Convergence of $\umapn$ to $\umap$
      as $n$ increases. The MAP solution is nearly approximated 
      with an RMA reduced misfit dimension of $n=150$ (a 6-fold
      reduction from the $N=901$ observational data points on the
      surface). The Achlioptas random projection ($66\%$ sparse) is used for $\Sigb$.} 
    \figlab{3DinversionparaviewsurfaceN}
  \end{figure}

\subsection{Verification of Theorem \theoref{morthm}}

Table \tabref{morozov} presents results for solving the model problem
for the 1D, 2D, and 3D examples with Morozov's
criterion, again using the Achlioptas random projection in the
randomized misfit approach. We perform several numerical experiments
and choose an $n$ for each
example such that Morozov's principle is met for
$\Jn\LRp{\umapn}$ with $\tau' \approx 1$. We then compute the
corresponding ranges for $\tau$ that are guaranteed with probability at least $p \geq
1-e^{-\beta}$, after choosing an acceptable cost distortion tolerance of
$\varepsilon = 0.5$ and $\beta$ as large as possible from
\eqnref{Nlb}. As can be seen, evaluating $\mc{J}\LRp{\umapn}$ gives a $\tau$ within the specified range, which satisfies
Morozov's criterion. That is, even for moderately small values of $n$,
if the discrepancy principle is satisfied for
$\Jn\LRp{\umapn}$, then the discrepancy principle is also satisfied
for $\J\LRp{\umapn}$. Thus $\umapn$ is a discrepancy
principle-satisfying solution for both the randomized reduced misfit
dimension problem \eqnref{randprob} and the original problem \eqnref{MAP}.

\begin{table}[h!t!b!]
  \caption{Verification of Morozov's discrepancy principle for the RMA
    solution with $\varepsilon=0.5$.}  \tablab{morozov}
  \begin{center}
    \begin{tabular}{r|r|r|r|r|r|r|r|r}

      & $N$ & $n$ & $\mc{J}_n\LRp{\umapn}$ & $\tau '$ & $\LRs{\frac{\tau'}{1+\varepsilon},\frac{\tau'}{1-\varepsilon}}$  &$p$&$\mc{J}\LRp{\umapn}$ &  $\tau$\\
      \hline
      \hline
      1D&$1025$ & $100$ & $1220$ & $1.190$  & $\LRs{0.793,2.380}$ &$95.6\%$ &$1074$ & $1.048$  \\
      \hline
      2D &$1333$ & $50$ & $1240$& $0.930$ & $\LRs{0.620, 1.860}$ &$79.0\%$ & $1406$  & $1.055$  \\
      \hline
     3D &$2474$ & $75$ & $2646$ & $1.070$  & $\LRs{0.713,2.139}$ &$90.4\%$ & $3928$  & $1.588$  \\
      \hline
    \end{tabular}
  \end{center}
\end{table}

\subsection{Scalability and performance}
\seclab{scalability}

We study the effect of the RMA reduced misfit dimension $n$ on
the overall algorithmic scalability of solving large-scale
PDE-constrained inverse problems with high observational data
dimensions. Specifically, we wish to show that RMA convergence is independent of $r$, the level of parameter information from the
data (see Section \secref{costanalysis}). Figure \figref{zoom}
compares singular values of the prior-preconditioned
misfit Hessian $\mathbf{H}$ corresponding to the original problem cost $\J$ to the
singular values of the surrogate prior-preconditioned
misfit Hessian $\mathbf{\tilde{H}}$ corresponding to the surrogate RMA
cost $\Jn$ for $n=30,50,$ and $100$. The Hessians are each evaluated at the same random point chosen from the prior. 
Note that the RMA reduced misfit dimension $n$ is a hard upper bound
on the numerical rank of $\mathbf{\tilde{H}}$, where numerical rank is
the number of singular values greater than some threshold $\epsilon
\leq 1$. Note also the faster spectral decay of the singular values of 
$\mathbf{\tilde{H}}$ compared to $\mathbf{H}$. Faster decay demonstrates that the
action of $\mathbf{\tilde{H}}$ on a vector can be captured with fewer
modes than the action of $\mathbf{H}$, resulting in decreased
overall work complexity as detailed in Section
\secref{costanalysis}. Similar behavior is observed when the
Hessians are evaluated at zero, at another random point, and at the full MAP point $\umap$,
thus the plots are omitted.

\begin{figure}[h!t!b!]
\centering
    \includegraphics[trim=1.4cm 6.6cm 2.3cm 6.5cm,clip=true, 
    width=0.7\textwidth]{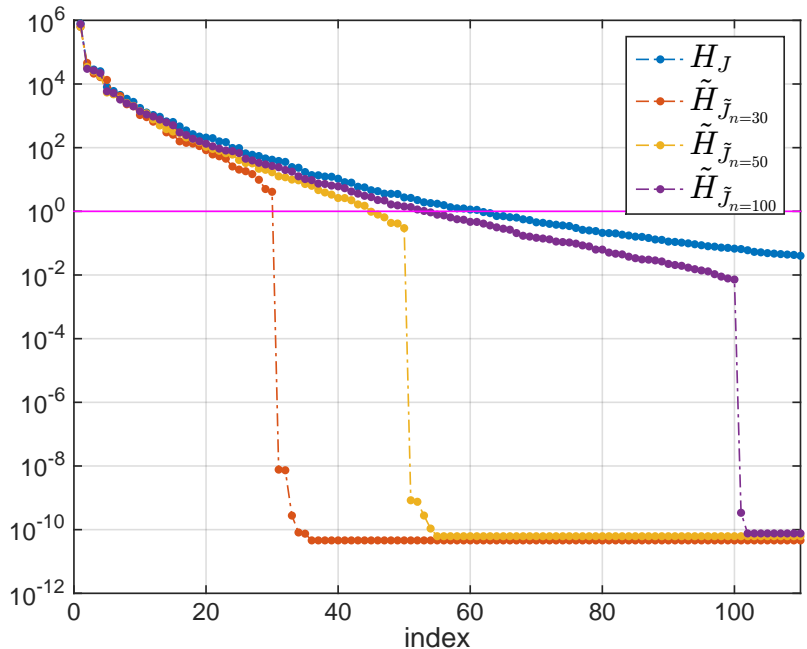}
  \caption{Log-linear spectra of the prior-preconditioned misfit Hessian for the 2D
    elliptic example. Each spectrum is evaluated at
     at the same random parameter $\ub$ drawn the prior. Numerical rank is the number
of singular values greater than some threshold $\epsilon \leq 1$. The misfit vector dimension ($N$
    in the original cost or $n$ in the RMA cost) is a hard upper bound on the
    numerical rank of the prior-preconditioned misfit Hessian. The
    misfit dimension $N$ for the original problem is 1333. }
\figlab{zoom}   
\end{figure}

Tables \tabref{tablesolves} and \tabref{tablesolves3D} respectively present algorithmic performance of
the original 2D and 3D elliptic problem compared to ten trials of the RMA with various distributions.
To investigate the effect of choosing the randomized misfit dimension $n
< r \leq N$ on work
complexity, $n$ is chosen to be $50$ for the 2D example where $N =
1333$, and $n=300$ for the 3D example where $N = 2474$. The Newton-CG
solver is terminated when the gradient, cost, or
step size falls below a tolerance of $10^{-6}$, or after 200 Newton
iterations for the 2D example and 15 Newton iterations for the 3D example. Each trial uses a different random number generator
seed. We observe that on average, using the RMA {\em with any
distribution} results in close to half as many PDE solves compared to solving the full
deterministic problem in the 2D example, and 14 to 28 percent fewer PDE
solves in the 3D example. There
appears to be little demonstrable difference in the quality of the
reconstruction as well; all experiments are successful in reconstructing the
Gaussian blob of high conductivity. Further
investigation on very large problems ($r=\mc{O}(1000)$ or larger) is needed. 

\begin{table}[h!t!b!]
\caption{Comparison of cost complexity measured in total number of
  PDE solves needed to resolve the 2D
  elliptic problem with a Gauss-Newton solver. AVG is average over ten
  trials. Convergence tolerances for the cost, gradient, and step size are set to
  $10^{-6}$ and the maximum number of Newton iterations allowed is 200.}
\tablab{tablesolves}
\resizebox{\columnwidth}{!}{%
\begin{tabular}{cccccccccccc}
\#PDE SOLVES & Trial 1 & Trial 2 & Trial 3 & Trial 4 & Trial 5 & Trial 6 & Trial 7 & Trial 8 & Trial 9 & Trial 10 & \textbf{AVG}\tabularnewline
\hline 
\hline 
Deterministic & 2423 & 2423 & 2423 & 2423 & 2423 & 2423 & 2423 & 2423 & 2423 & 2423 & \textbf{2423.0}\tabularnewline
Rademacher & 1303 & 1298 & 1273 & 1225 & 1279 & 1252 & 1270 & 1267 & 1252 & 1274 & \textbf{1269.3}\tabularnewline
Achlioptas & 1149 & 1253 & 1293 & 1266 & 1253 & 1245 & 1267 & 1262 & 1231 & 1254 & \textbf{1247.3}\tabularnewline
95-percent sparse & 1287 & 1272 & 1230 & 1273 & 1235 & 1217 & 1252 & 1248 & 1293 & 1238 & \textbf{1254.5}\tabularnewline
99-percent sparse & 1212 & 1243 & 1245 & 1247 & 1250 & 1263 & 1268 & 1226 & 1218 & 1274 & \textbf{1244.6}\tabularnewline
Gaussian & 1237 & 1258 & 1224 & 1226 & 1240 & 1273 & 1278 & 1255 & 1247 & 1234 & \textbf{1247.2}\tabularnewline
Uniform & 1217 & 1244 & 1233 & 1242 & 1241 & 1264 & 1259 & 1262 & 1275 & 1248 & \textbf{1248.5}\tabularnewline
\hline 
\end{tabular}
}
\end{table}

\begin{table}[h!t!b!]
\caption{Comparison of cost complexity measured in total number of
  PDE solves needed to resolve the 3D
  elliptic problem with a Gauss-Newton solver. AVG is average over ten
  trials. Convergence tolerances for the cost, gradient, and step size are set to
  $10^{-6}$ and the maximum number of Newton iterations allowed is 15.}
\tablab{tablesolves3D}
\resizebox{\columnwidth}{!}{%
\begin{tabular}{cccccccccccc}
\#PDE SOLVES & Trial 1 & Trial 2 & Trial 3 & Trial 4 & Trial 5 & Trial 6 & Trial 7 & Trial 8 & Trial 9 & Trial 10 & \textbf{AVG}\tabularnewline
\hline 
\hline 
Deterministic & 331 & 331 & 331 & 331& 331 & 331 & 331 &331 &331 & 331 & \textbf{331.0}\tabularnewline
Rademacher & 249&307&383&219&263&285&303&235&217&253&\textbf{271.4}\tabularnewline 
Achlioptas & 259&287&263&257&283&203&259&223&319&255&\textbf{260.8}\tabularnewline
95-percent sparse &279&273&221&231&285&241&255&313&217&367&\textbf{268.2}\tabularnewline
99-percent sparse& 277&321&271&249&279&395&273&223&281&279&\textbf{284.8}  \tabularnewline
Gaussian & 219&223&323&199&257&233&229&257&211&225&\textbf{237.6}\tabularnewline
Uniform &247&285&317&285&221&271&251&241&201&249& \textbf{256.8}\tabularnewline
\hline 
\end{tabular}
}
\end{table}

\section{Conclusions and future work}
\seclab{conclusions}

A randomized misfit approach is presented for reducing computational complexity induced by big data in
general large-scale PDE-constrained inverse
problems. The method permits a novel analysis of the stochastic cost
function and its minimizer via probabilistic bounds from random
projection theory. It is shown that a subgaussian distribution
guarantees the solution obtained from the randomized misfit approach
will satisfy Morozov's discrepancy principle with a low failure rate (that
decays exponentially with respect to the reduced dimension $n$).

It is shown that the stochastically derived method is equivalent to applying a random projection to the data misfit
vector. This results in a stochastic programming-based proof (up to
a constant) of a Johnson-Lindenstrauss lemma variant proved
previously (see, e.g. \cite{Alon03,
  JayramWoodruff13} for proofs based on combinatorics and
communication theory, respectively). Our connection provides two main theoretical insights.
The first is intuition into the surprising numerical accuracy with
small reduced misfit
dimension $n$. This phenomenon has been noted in related stochastic
methods, particularly in random source encoding methods, without
theoretical explanation. The second is an intuition into the
ubiquitous $\mc{O}(1/\sqrt{n})$ factor in Johnson-Lindenstrauss
transforms (a rate shown to be tight by
\cite{Alon03}) using a Monte Carlo framework.

The focus of this work is on the framework and resulting analysis of the
method. We presented results for a medium size ($N=\mc{O}(10^3)$) synthetic example
in 1D, 2D, and 3D and different distributions for numerical
justification of theoretical results and illustration of the
method. Results presented here are valid for nonlinear inverse problems with the exception
of part (ii) in Theorem \theoref{error} (which only applies to linear
forward models). We expect such a result is also true for nonlinear
inverse problems, and this is under investigation.

Combining dimension reduction and uncertainty quantification is the
broader focus of our ongoing work towards developing scalable methods for large-scale
inverse problems in high-dimensional parameter space with big
data. Our current research includes an application of the randomized
misfit approach to larger problems with big data, e.g. time-dependent data governed by expensive-to-solve forward models, and an
extension to the Bayesian solution. One project involves a large-scale
multi-tracer test inverse problem governed by an expensive-to-solve reservoir simulation. Also in
forthcoming tangential work we will compare different randomization frameworks
for solving inverse problems.


\section*{Acknowledgments}
We thank the anonymous referees for their valuable comments,
suggestions, and support. Their efforts helped improve the manuscript
significantly. We would like to thank Prof. Mark Girolami for pointing out the
similarity between randomized projections and the randomized
misfit approach, which led to the connection with Johnson-Lindenstrauss theory.
This in turn allowed us to carry out the analysis of the randomized
misfit approach presented here. We also thank Vishwas Rao for careful
proofreading. This research was partially supported by Department of
Energy (DOE) grants DE-SC0010518 and DE-SC0011118. We are grateful for the support.
\section*{References}

\bibliography{ceo} 

\end{document}